\title{\textbf{A generalization of the Askey-Wilson relations\\using a projective geometry}}
\author[]{Ian Seong}
\date{}
\newtheoremstyle{dotless}{}{}{\itshape}{}{\bfseries}{}{ }{}
\theoremstyle{dotless}
\newtheorem{theorem}{Theorem}[section]
\newtheorem{corollary}[theorem]{Corollary}
\newtheorem{lemma}[theorem]{Lemma}
\newtheoremstyle{dotlessdef}{}{}{}{}{\bfseries}{}{ }{}
\theoremstyle{dotlessdef}
\newtheorem{definition}[theorem]{Definition}
\newtheorem{note}[theorem]{Note}
\begin{document}
\maketitle

\begin{abstract}
In this paper, we present a generalization of the Askey-Wilson relations that involves a projective geometry. A projective geometry is defined as follows. Let $h>k\geq 1$ denote integers. Let $\mathbb{F}_{q}$ denote a finite field with $q$ elements. Let $\mathcal{V}$ denote an $(h+k)$-dimensional vector space over $\mathbb{F}_{q}$. Let the set $P$ consist of the subspaces of $\mathcal{V}$. The set $P$, together with the inclusion partial order, is a poset called a projective geometry. We define a matrix $A\in \text{Mat}_{P}(\mathbb{C})$ as follows. For $u,v\in P$, the $(u,v)$-entry of $A$ is $1$ if each of $u,v$ covers $u\cap v$, and $0$ otherwise. Fix $y\in P$ with $\dim y=k$. We define a diagonal matrix $A^*\in \text{Mat}_{P}(\mathbb{C})$ as follows. For $u\in P$, the $(u,u)$-entry of $A^{*}$ is $q^{\dim(u\cap y)}$. We show that 
\begin{align*}
    &A^2A^{*}-\bigl(q+q^{-1}\bigr)AA^{*}A+A^{*}A^{2}-\mathcal{Y}\bigl(AA^{*}+A^{*}A\bigr)-\mathcal{P} A^{*}=\Omega A+G,\\
            &A^{*2}A-\bigl(q+q^{-1}\bigr) A^*AA^*+AA^{*2}=\mathcal{Y}A^{*2}+\Omega A^{*}+G^{*},
\end{align*} where $\mathcal{Y}, \mathcal{P}, \Omega, G, G^*$ are matrices in $\text{Mat}_{P}(\mathbb{C})$ that commute with each of $A, A^*$. We give precise formulas for $\mathcal{Y}, \mathcal{P}, \Omega, G, G^*$.\\ \\
     \textbf{Keywords.} Projective geometry; Askey-Wilson relations.\\
    \textbf{2020 Mathematics Subject Classification.} Primary: 06C05. Secondary: 05E30, 33D45.
\end{abstract}

\section{Introduction}
 There is a family of graphs said to be distance-regular \cite{BBIT, BCN}. Some distance-regular graphs satisfy a condition called the $Q$-polynomial property \cite[p.~135]{BCN}. Let $\Gamma$ denote a $Q$-polynomial distance-regular graph. There are some well-known parameters $\beta,\gamma,\gamma^*,\varrho, \varrho^*$ that are used to describe the $Q$-polynomial structure \cite[Lemma~5.4]{ter3}. Let $A$ denote the adjacency matrix of $\Gamma$. Pick a vertex $y$ of $\Gamma$, and let $A^{*}=A^{*}(y)$ denote the dual adjacency matrix of $\Gamma$ with respect to $y$ \cite[p.~121]{Liang}. The algebra $T$ generated by $A,A^{*}$ is often called the Terwilliger algebra of $\Gamma$ with respect to $y$ \cite[p.~364]{ter1}. Let $W$ denote a thin irreducible $T$-module \cite[p.~366]{ter1}. By \cite[Lemma~14.9]{Worawannotai}, there exist complex scalars $\omega=\omega(W),\eta=\eta(W),\eta^*=\eta^*(W)$ such that the following relations hold on $W$: 
\begin{align}
        A^2A^{*}-\beta AA^{*}A+A^{*}A^{2}-\gamma(AA^{*}+A^{*}A)-\varrho A^*&=\gamma^*A^2+\omega A+\eta I,\label{ask1}\\
        A^{*2}A-\beta A^{*}AA^{*}+AA^{*2}-\gamma^{*}(A^{*}A+AA^{*})-\varrho^{*} A&=\gamma A^{*2}+\omega A^{*}+\eta^{*} I.\label{ask2}
    \end{align}
The relations (\ref{ask1}), (\ref{ask2}) are called the Askey-Wilson relations \cite[Theorem~1.5]{Vidunas}. 

We now assume that every irreducible $T$-module is thin. Then according to Worawannotai \cite[Lemma~14.12]{Worawannotai}, the algebra $T$ contains some central elements $\Omega,G, G^*$ such that
\begin{align*}
        A^2A^{*}-\beta AA^{*}A+A^{*}A^{2}-\gamma(AA^{*}+A^{*}A)-\varrho A^*&=\gamma^*A^2+\Omega A+G,\\
        A^{*2}A-\beta A^{*}AA^{*}+AA^{*2}-\gamma^{*}(A^{*}A+AA^{*})-\varrho^{*} A&=\gamma A^{*2}+\Omega A^{*}+G^{*}.
    \end{align*}
    In \cite[Section~20,~21]{Worawannotai}, Worawannotai finds $\Omega, G, G^*$ explicitly under the assumption that $\Gamma$ is a dual polar graph. 

    In the present paper, we obtain a generalization of the Askey-Wilson relations that is roughly analogous to the Worawannotai result, with $\Gamma$ replaced by a projective geometry. To explain our result, we first recall the definition of a projective geometry. Let $h>k\geq 1$ denote integers. Let $\mathbb{F}_{q}$ denote a finite field with $q$ elements. Let $\mathcal{V}$ denote an $(h+k)$-dimensional vector space over $\mathbb{F}_{q}$. Let the set $P$ consist of the subspaces of $\mathcal{V}$. The set $P$, together with the inclusion partial order, is a poset called a projective geometry.

    Let $\text{Mat}_{P}(\mathbb{C})$ denote the $\mathbb{C}$-algebra consisting of the matrices with rows and columns indexed by $P$ and all entries in $\mathbb{C}$. Define $A\in \text{Mat}_{P}(\mathbb{C})$ as follows. For $u,v\in P$, the $(u,v)$-entry of $A$ is
    \begin{equation*}
        A_{u,v}=\begin{cases}
            1&\text{if each of $u,v$ covers $u\cap v$,}\\
            0&\text{otherwise.}
        \end{cases}
    \end{equation*} Fix $y\in P$ with $\dim y=k$. Define a diagonal matrix $A^*\in \text{Mat}_{P}(\mathbb{C})$ as follows. For $u\in P$, the $(u,u)$-entry of $A^{*}$ is
    \begin{equation*}
        \bigl(A^{*}\bigr)_{u,u}=q^{\dim(u\cap y)}.
    \end{equation*}
    We show that
\begin{align}
            &A^2A^{*}-\bigl(q+q^{-1}\bigr)AA^{*}A+A^{*}A^{2}-\mathcal{Y}\bigl(AA^{*}+A^{*}A\bigr)-\mathcal{P} A^{*}=\Omega A+G,\label{gen1}\\
            &A^{*2}A-\bigl(q+q^{-1}\bigr) A^*AA^*+AA^{*2}=\mathcal{Y}A^{*2}+\Omega A^{*}+G^{*},\label{gen2}
        \end{align}
        where $\mathcal{Y}, \mathcal{P}, \Omega, G, G^*$ are matrices in $\text{Mat}_{P}(\mathbb{C})$ that commute with each of $A, A^*$. We give a precise formula for $\mathcal{Y}, \mathcal{P}, \Omega, G, G^*$ in the main body of the paper. The relations (\ref{gen1}), (\ref{gen2}) are the main results of the paper.
        
        To prove our results, we make heavy use of the work of Watanabe \cite{Watanabe}. We now recall some definitions and results from that paper. Define diagonal matrices $K_1, K_2 \in \text{Mat}_{P}(\mathbb{C})$ as follows. For $u \in P$ the $(u,u)$-entry is
        \begin{equation*}
            \bigl(K_1\bigl)_{u,u}=q^{\frac{k}{2}-i}, \qquad \qquad \bigl(K_2\bigl)_{u,u}=q^{j-\frac{h}{2}},
        \end{equation*}
        where $i=\dim (u\cap y)$ and $i+j=\dim u$. The matrices $K_1, K_2$ are invertible. We define matrices $L_1,L_2\in \text{Mat}_{P}(\mathbb{C})$ as follows. For $u,v\in P$ their $(u,v)$-entries are
    \begin{align*}
        \bigl(L_1\bigr)_{u,v}&=\begin{cases}
            1&\text{if $u\subseteq v$ and $\dim u=\dim v-1$ and $\dim (u\cap y)=\dim (v\cap y)-1$,}\\
            0&\text{otherwise,}
        \end{cases}\\
        \bigl(L_2\bigr)_{u,v}&=\begin{cases}
            1&\text{if $u\subseteq v$ and $\dim u=\dim v-1$ and $\dim (u\cap y)=\dim (v\cap y)$,}\\
            0&\text{otherwise.}
        \end{cases}
    \end{align*}
    Define $R_1=L_1^{t}$ and $R_2=L_2^{t}$, where $t$ is the matrix-transpose. Let $\mathcal{H}$ denote the subalgebra of $\text{Mat}_{P}(\mathbb{C})$ generated by $L_1, L_2, R_1, R_2, K_1^{\pm1}, K_2^{\pm1}$. In \cite[Section~7]{Watanabe}, Watanabe gives many relations between the $\mathcal{H}$-generators. In \cite[Section~8]{Watanabe}, Watanabe classifies up to isomorphism the irreducible $\mathcal{H}$-modules. For each irreducible $\mathcal{H}$-module $W$, he gives a basis for $W$ and the action of the $\mathcal{H}$-generators on that basis. In \cite[Section~9]{Watanabe}, Watanabe gives a generating set $\Lambda_0, \Lambda_1, \Lambda_2$ for the center of $\mathcal{H}$.       
        
    We now summarize how we use the Watanabe results to obtain (\ref{gen1}), (\ref{gen2}). The basic idea is to evaluate $A,A^*$ in (\ref{gen1}), (\ref{gen2}) using the following reduction. Note that \begin{equation*}
        A^*=q^{\frac{k}{2}}K_1^{-1}.
    \end{equation*}
    We show that 
    \begin{equation*}
        A=R+L+F^{0}+F^{+}+F^{-},
    \end{equation*}
    where
    \begin{align*}
        R&=L_1R_2,\\
        L&=L_2R_1,\\
        F^{0}&=L_1R_1-R_1L_1+(q-1)^{-1}\Bigl(q^{\frac{h+k}{2}}K_1^{-1}K_2-q^{\frac{k}{2}}K_1-q^{\frac{h}{2}}K_2+I\Bigr),\\
        F^{+}&=L_2R_2-q^{\frac{k}{2}}(q-1)^{-1}K_1\Bigl(q^{\frac{h}{2}}K_2^{-1}-I\Bigr),\\
        F^{-}&=R_1L_1-q^{\frac{h}{2}}(q-1)^{-1}K_2\Bigl(q^{\frac{k}{2}}K_1^{-1}-I\Bigr).
    \end{align*}

    We remark that $R,L,F^{0}, F^{+}, F^{-}$ correspond to some orbits $\mathcal{B}_{xy}, \mathcal{C}_{xy},\mathcal{A}_{xy}^{0},\mathcal{A}_{xy}^{+},\mathcal{A}_{xy}^{-}$ in \cite[Definition~6.1,~6.6]{Seong2}. We adjust the central elements $\Lambda_0, \Lambda_1, \Lambda_2$ to obtain some elements $\Omega_0, \Omega_1, \Omega_2$ that generate the center of $\mathcal{H}$. We express $F^{0}, F^{+}, F^{-}$ as polynomials in $K_1^{\pm1}, K_2^{\pm1}, \Omega_0,\Omega_1,\Omega_2$. In this way, we express everything in (\ref{gen1}), (\ref{gen2}) in terms of  $R, L, K_1^{\pm1}, K_2^{\pm1},\Omega_0,\Omega_1,\Omega_2$. This yields (\ref{gen1}), (\ref{gen2}) after some algebraic manipulation.

    We have been discussing (\ref{gen1}), (\ref{gen2}). We also obtain the following related results that may be of independent interest. We display the action of $\Omega_0,\Omega_1,\Omega_2, \mathcal{Y}, \mathcal{P}, \Omega, G, G^*$ on the irreducible $\mathcal{H}$-modules. By \cite[Theorem~8.9]{Watanabe}, each irreducible $\mathcal{H}$-module is determined up to isomorphism by certain parameters $\alpha,\beta,\rho$. In some previous literature \cite{Liang, ter3}, the action of $R,L,A,A^*$ on the irreducible $\mathcal{H}$-modules is expressed in terms of some parameters called the endpoint $\nu$, dual endpoint $\mu$, diameter $d$, and an auxiliary parameter $e$. Following \cite[Section~3]{Liang}, we explain how to convert from $\alpha,\beta,\rho$ to $\nu,\mu,d,e$.
    
    The paper is organized as follows. In Section 2 we give preliminaries on the projective geometry and the algebra $\mathcal{H}$. In Section 3 we talk about the matrices $F^{0},F^{+}, F^{-}$. In Section 4 we discuss the center of $\mathcal{H}$. In Sections 5 and 6, we discuss the matrices $R, L, A, A^*$. In Section 7, we find $\mathcal{Y}, \mathcal{P}, \Omega, G, G^*$ and show that the matrices $A,A^*$ satisfy (\ref{gen1}) and (\ref{gen2}). In Section 8, we explain how to convert from $\alpha,\beta,\rho$ to $\nu,\mu,d,e$. Section 9 is an appendix that contains some relations involving the $\mathcal{H}$-generators.

\section{Projective geometry $P$ and algebra $\mathcal{H}$}
\label{proj}

    Let $h,k$ denote integers such that $h>k\geq 1$. Let $\mathbb{F}_{q}$ denote a finite field with $q$ elements. Let $\mathcal{V}$ denote an $(h+k)$-dimensional vector space over $\mathbb{F}_{q}$. Let the set $P$ consist of the subspaces of $\mathcal{V}$. The set $P$, together with the inclusion partial order, is a poset called a \emph{projective geometry}. For $u,v\in P$, we say that $v$ \emph{covers} $u$ whenever $v\supseteq u$ and $\dim v=\dim u+1$. 

    Let $\text{Mat}_{P}(\mathbb{C})$ denote the $\mathbb{C}$-algebra consisting of the matrices with rows and columns indexed by $P$ and all entries in $\mathbb{C}$. Let $V$ denote the $\mathbb{C}$-vector space consisting of the column vectors with rows indexed by $P$ and all entries in $\mathbb{C}$. The algebra $\text{Mat}_{P}(\mathbb{C})$ acts on $V$ by left multiplication. The $\text{Mat}_{P}(\mathbb{C})$-module $V$ is called \emph{standard}. For $u\in P$, define the column vector $\widehat{u}\in V$ that has $u$-entry $1$ and all other entries $0$. The vectors $\bigl\{\widehat{u}\mid u\in P\bigr\}$ form a basis for $V$.

    For $0\leq \ell\leq h+k$ let the set $P_{\ell}$ consist of the $\ell$-dimensional subspaces of $\mathcal{V}$. We have a partition
    \begin{equation*}
        P=\bigcup_{\ell=0}^{h+k}P_{\ell}.
    \end{equation*}
    
    \begin{definition}
    \label{e*l}
        For $0\leq \ell\leq h+k$ define a diagonal matrix $E^{*}_{\ell}\in \text{Mat}_{P}(\mathbb{C})$ as follows. For $u\in P$, the $(u,u)$-entry of $E^{*}_{\ell}$ is 
    \begin{equation*}
        \bigl(E^{*}_{\ell}\bigr)_{u,u}=\begin{cases}
            1&\text{if $u\in P_{\ell}$},\\
            0&\text{if $u\not\in P_{\ell}$.}
        \end{cases}
    \end{equation*}
    \end{definition}
    
    We have
    \begin{equation*}
        I=\sum_{\ell=0}^{h+k}E^{*}_{\ell},
    \end{equation*}
    \begin{equation*}
        E^{*}_{i}E^{*}_{j}=\delta_{i,j}E^{*}_{i}\qquad \qquad 0\leq i,j\leq h+k.
    \end{equation*}
    
    For $0\leq \ell\leq h+k$,
    \begin{equation*}
        E^{*}_{\ell}V=\text{Span}\bigl\{\widehat{u}\mid u\in P_{\ell}\bigr\}.
    \end{equation*}
    The following sum is direct:
    \begin{equation*}
        V=\sum_{\ell=0}^{h+k}E^{*}_{\ell}V.
    \end{equation*}

    For the rest of this paper, we fix $y\in P_{k}$. For $0 \leq i\leq k$ and $0 \leq j\leq h$ define
    \begin{equation*}
        P_{i,j}=\bigl\{u\in P\mid \dim (u\cap y)=i, \dim u=i+j\bigr\}.
    \end{equation*}

    We have a partition
    \begin{equation*}
        P=\bigcup_{i=0}^{k}\bigcup_{j=0}^{h}P_{i,j}.
    \end{equation*}

    For notational convenience, for integers $r,s$ we define $P_{r,s}$ to be empty unless $0\leq r\leq k$ and $0\leq s\leq h$.

    In the diagram below, we illustrate the set $P$ and the subsets $P_{i,j}$ $(0\leq i\leq k,\; 0\leq j\leq h)$.
    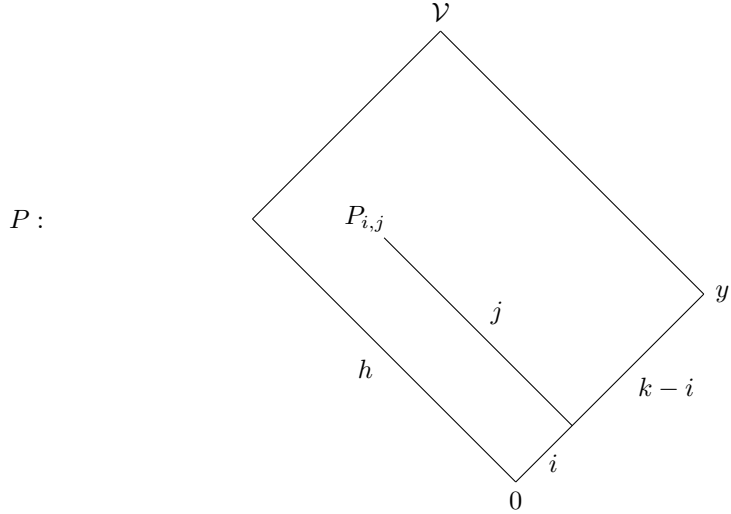
\begin{figure}[!ht]
\centering
{%
\begin{circuitikz}
\tikzstyle{every node}=[font=\normalsize]
\draw [short] (2.5,-3.5) -- (-1,0);
\draw [short] (-1,0) -- (1.5,2.5);
\draw [short] (1.5,2.5) -- (5,-1);
\draw [short] (5,-1) -- (2.5,-3.5);
\draw [short] (0.75,-0.25) -- (3.25,-2.75);
\node [font=\normalsize] at (1.5,2.75) {$\mathcal{V}$};
\node [font=\normalsize] at (5.25,-1) {$y$};
\node [font=\normalsize] at (2.5,-3.75) {$0$};
\node [font=\normalsize] at (3,-3.25) {$i$};
\node [font=\normalsize] at (2.25,-1.25) {$j$};
\node [font=\normalsize] at (0.5,0) {$P_{i,j}$};
\node [font=\normalsize] at (4.5,-2.25) {$k-i$};
\node [font=\normalsize] at (0.5,-2) {$h$};
\node [font=\normalsize] at (-4,0) {$P:$};
\node [font=\normalsize] at (8,0) {$\;$};
\end{circuitikz}
}%
\caption{The projective geometry $P$ and the location of $P_{i,j}$.}
\label{projective}
\end{figure}

    \begin{definition}
        For $0 \leq i\leq k$ and $0 \leq j\leq h$, define a diagonal matrix $E^{*}_{i,j}\in \text{Mat}_P(\mathbb{C})$ as follows. For $u\in P$, the $(u,u)$-entry of $E^{*}_{i,j}$ is
    \begin{equation*}
        \bigl(E^{*}_{i,j}\bigr)_{u,u}=\begin{cases}
            1&\text{if }u\in P_{i,j},\\
            0&\text{if $u\notin P_{i,j}$}.
        \end{cases}
    \end{equation*}  
    For notational convenience, for integers $r,s$ we define $E^{*}_{r,s}=0$ unless $0 \leq r\leq k$ and $0 \leq s\leq h$.
    \end{definition}

    For $0\leq i\leq k$ and $0\leq j\leq h$, we have
    \begin{equation*}
        E^{*}_{i,j}V={\rm{Span}} \lbrace{ \widehat u} \mid u \in P_{i,j}\rbrace.
    \end{equation*}

     The following sum is direct:
    \begin{equation*}
        V=\sum_{i=0}^{k}\sum_{j=0}^{h} E^{*}_{i,j}V.
    \end{equation*}
    
    We now describe how the sets    \begin{equation*}
        P_{\ell}\qquad \qquad \qquad 0\leq \ell\leq h+k
    \end{equation*} are related to the sets
    \begin{equation*}
        P_{i,j}\qquad \qquad 0\leq i\leq k\qquad \qquad 0\leq j\leq h.
    \end{equation*}
    For $0\leq \ell\leq h+k$ we have a partition
    \begin{equation*}
        P_{\ell}=\bigcup_{i,j}P_{i,j},
    \end{equation*}
    where the union is over the ordered pairs $(i,j)$ such that $0\leq i\leq k$ and $0\leq j\leq h$ and $i+j=\ell$. 
    
    For $0\leq \ell\leq h+k$,
    \begin{align*}
        E^{*}_{\ell}&=\sum_{i,j}E^{*}_{i,j},\\
        E^{*}_{\ell}V&=\sum_{i,j}E^{*}_{i,j}V,\qquad \qquad (\text{direct sum})
    \end{align*}
    where the sums are over the ordered pairs $(i,j)$ such that $0\leq i\leq k$ and $0\leq j\leq h$ and $i+j=\ell$. 
    
    Earlier, we described the covering relation on $P$. Next, we give a refinement of the covering relation.
    
    \begin{lemma}
    \label{coverlem}
        Let $u,v\in P$ such that $v$ covers $u$. Write
        \begin{equation*}
            u\in P_{i,j}, \qquad \qquad v\in P_{r,s}.
        \end{equation*}
        Then either (i) $r=i+1$ and $s=j$, or (ii) $r=i$ and $s=j+1$.
    \end{lemma}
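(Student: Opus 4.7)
The plan is to unpack the definitions of $P_{i,j}$ and of the covering relation, and then pin down $r$ and $s$ from two independent constraints: one coming from the total dimension, the other from the dimension of the intersection with $y$.

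First I would record what $u\in P_{i,j}$ and $v\in P_{r,s}$ mean: $\dim u=i+j$, $\dim(u\cap y)=i$, $\dim v=r+s$, $\dim(v\cap y)=r$. Since $v$ covers $u$, we have $u\subseteq v$ and $\dim v=\dim u+1$, which immediately gives the total-dimension constraint
\begin{equation*}
r+s=i+j+1.
\end{equation*}

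Next I would control $r-i$. From $u\subseteq v$, intersecting with $y$ yields $u\cap y\subseteq v\cap y$, so $r\geq i$. For the upper bound, I would use that the composition
\begin{equation*}
v\cap y\hookrightarrow v\twoheadrightarrow v/u
\end{equation*}
has kernel $(v\cap y)\cap u=u\cap y$ (the last equality uses $u\subseteq v$). Thus $(v\cap y)/(u\cap y)$ embeds into the $1$-dimensional space $v/u$, giving $r-i\leq 1$. So $r-i\in\{0,1\}$.

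Combining the two constraints leaves exactly the stated alternatives: either $r=i+1$ and $s=j$, or $r=i$ and $s=j+1$. There is no real obstacle here; the only thing to be careful about is the identification of the kernel in the map $v\cap y\to v/u$, which is where the hypothesis $u\subseteq v$ is used to rewrite $(v\cap y)\cap u$ as $u\cap y$.
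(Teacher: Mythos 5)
Your proof is correct and is essentially the same elementary dimension count as the paper's: both combine $r+s=i+j+1$ with the behaviour of $\dim(\,\cdot\,\cap y)$ under the inclusion $u\subseteq v$. The only cosmetic difference is that the paper gets $i\leq r$ and $j\leq s$ from the observation that $u\cap v=u\in P_{i,j}$, while you instead pin down $r-i\in\{0,1\}$ via the embedding of $(v\cap y)/(u\cap y)$ into the one-dimensional space $v/u$; both finish identically from the dimension constraint.
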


    \begin{proof}
        By linear algebra, 
        \begin{equation*}
            u\cap v\in P_{a,b},
        \end{equation*}
        where $a\leq \min\{i,r\}$ and $b\leq \min\{j,s\}$.
        Since $v$ covers $u$, we have $u\subseteq v$ and $r+s=i+j+1$. 
        Since $u\cap v=u\in P_{i,j}$, we have $i\leq r$ and $j\leq s$. The result follows.
    \end{proof}
        
    \begin{definition}
    \label{slashcover}
        Referring to Lemma \ref{coverlem},  we say that \emph{$v$ $\slash$-covers $u$} whenever (i) holds, and \emph{$v$ $\backslash$-covers $u$} whenever (ii) holds.
    \end{definition}

    We illustrate Definition \ref{slashcover} using the diagrams below.
    \begin{figure}[!ht]
\centering
{%
\begin{circuitikz}
\tikzstyle{every node}=[font=\normalsize]
\draw [short] (2.5,-3.5) -- (-1,0);
\draw [short] (-1,0) -- (1.5,2.5);
\draw [short] (1.5,2.5) -- (5,-1);
\draw [short] (5,-1) -- (2.5,-3.5);
\draw [short] (0.9,-0.1) -- (1.1,0.1);
\node [font=\normalsize] at (1.5,2.75) {$\mathcal{V}$};
\node [font=\normalsize] at (5.25,-1) {$y$};
\node [font=\normalsize] at (2.5,-3.75) {$0$};
\node [font=\normalsize] at (1.25,0.25) {$v$};
\node [font=\normalsize] at (0.75,-0.25) {$u$};
\node [font=\normalsize] at (4,-2.5) {$k$};
\node [font=\normalsize] at (0.5,-2) {$h$};
\node [font=\normalsize] at (2,-4.75) {Figure 2A: $v$ $\slash$-covers $u$.};
\end{circuitikz}
\hspace{0.25cm}
\begin{circuitikz}
\draw [short] (9.5,-3.5) -- (6,0);
\draw [short] (6,0) -- (8.5,2.5);
\draw [short] (8.5,2.5) -- (12,-1);
\draw [short] (12,-1) -- (9.5,-3.5);
\draw [short] (8.4,0.1) -- (8.6,-0.1);

\node [font=\normalsize] at (8.5,2.75) {$\mathcal{V}$};
\node [font=\normalsize] at (12.25,-1) {$y$};
\node [font=\normalsize] at (9.5,-3.75) {$0$};
\node [font=\normalsize] at (8.25,0.25) {$v$};
\node [font=\normalsize] at (8.75,-0.25) {$u$};
\node [font=\normalsize] at (11,-2.5) {$k$};
\node [font=\normalsize] at (7.5,-2) {$h$};
\node [font=\normalsize] at (9,-4.75) {Figure 2B: $v$ $\backslash$-covers $u$.};
\end{circuitikz}
}%
\end{figure}

\renewcommand{\thefigure}{\arabic{figure}}

\setcounter{figure}{2}

In what follows, we use the notation
    \begin{equation*}
        [m]=\frac{q^m-1}{q-1}\qquad \qquad (m\in \mathbb{Z}).
    \end{equation*}

\begin{lemma}{\cite[Lemma~5.1]{Watanabe}}
    \label{cover}
        For $0\leq i\leq k$ and $0\leq j\leq h$, the following (i)--(iv) hold:
        \begin{enumerate}[label=(\roman*)]
            \item each element in $P_{i,j}$ $\slash$-covers exactly  $q^j[i]$ elements;

            \item each element in $P_{i,j}$ $\backslash$-covers exactly $[j]$ elements;

            \item each element in $P_{i,j}$ is $\slash$-covered by exactly $[k-i]$ elements;

            \item each element in $P_{i,j}$ is $\backslash$-covered by exactly $q^{k-i}[h-j]$ elements.
        \end{enumerate}
    \end{lemma}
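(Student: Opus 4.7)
The plan is to fix $u \in P_{i,j}$, abbreviate $u' = u \cap y$ (so $\dim u' = i$), and then analyze the covers and down-covers of $u$ separately by recognizing which case of Lemma \ref{coverlem} they fall into.

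For (i) and (ii), I would count the hyperplanes $w$ of $u$ (i.e.\ $w \subseteq u$ with $\dim w = \dim u - 1$, so that $u$ covers $w$) according to their intersection with $y$. Since $w \cap y = w \cap u' $ and $w$ is a hyperplane of $u$, the dimension $\dim(w\cap y)$ is either $i$ (when $u'\subseteq w$) or $i-1$ (when $u'\not\subseteq w$, in which case $w\cap u'$ is a hyperplane of $u'$). By Lemma \ref{coverlem} (or rather by Definition \ref{slashcover} applied with the roles of $u,v$ switched), the $\backslash$-covers of $u$ from below correspond to the case $u' \subseteq w$ and the $\slash$-covers to the case $u' \not\subseteq w$. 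The hyperplanes of $u$ containing $u'$ are in bijection with the hyperplanes of the quotient $u/u'$, a space of dimension $j$, so there are $[j]$ of them, giving (ii). The total number of hyperplanes of $u$ is $[i+j]$, so the remaining count for (i) is $[i+j]-[j]$; a one-line $q$-algebra check shows $[i+j]-[j] = q^j[i]$.

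For (iii) and (iv), I would dually count the one-dimensional extensions $v$ of $u$ (i.e.\ $v \supseteq u$ with $\dim v = \dim u + 1$). The lines $v/u$ in $\mathcal{V}/u$ number $[h+k-i-j]$. The key observation is that $v\cap y \supseteq u'$ and the map $(v\cap y)/u' \hookrightarrow v/u$ forces $\dim(v\cap y) \in \{i, i+1\}$, with $\dim(v\cap y)=i+1$ exactly when $v$ contains some vector of $y\setminus u$, equivalently when $v \subseteq u+y$. Since $u+y$ has dimension $(i+j)+k-i = j+k$, the lines $v/u$ inside $(u+y)/u$ number $[k-i]$, giving (iii). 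Subtracting, the number of $\backslash$-covers from above is $[h+k-i-j]-[k-i]$, which equals $q^{k-i}[h-j]$ by the same type of $q$-algebra identity, giving (iv).

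None of the steps is really an obstacle; this is classical Gaussian-coefficient bookkeeping. The only point requiring a bit of care is the equivalence $\dim(v\cap y)=i+1 \iff v\subseteq u+y$ in the upward count, since a priori $v\cap y$ could grow without $v$ being contained in $u+y$; the verification is to pick $z \in v\setminus u$ and note that if $v\cap y \supsetneq u'$, one can choose $z\in y$, forcing $v = u + \mathbb{F}_q z \subseteq u+y$, and conversely any $v\subseteq u+y$ strictly larger than $u$ meets $y$ outside $u'$. After this, (i)--(iv) all reduce to standard identities among the symbols $[m]$.
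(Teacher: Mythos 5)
Your proof is correct. Note that the paper itself offers no argument for this lemma: it is quoted directly from Watanabe \cite[Lemma~5.1]{Watanabe}, so there is no in-paper proof to compare against; your proposal supplies a complete elementary justification. The case analysis matches Definition \ref{slashcover} exactly: for the downward counts, a hyperplane $w$ of $u$ has $w\cap y=w\cap u'$ equal to $u'$ or to a hyperplane of $u'$ according as $u'\subseteq w$ or not, which correctly separates the $\backslash$-covers (count $[j]$, hyperplanes of $u/u'$) from the $\slash$-covers (count $[i+j]-[j]=q^{j}[i]$). For the upward counts, the one delicate point you flag — that $\dim(v\cap y)=i+1$ if and only if $v\subseteq u+y$ — is handled correctly; an equivalent one-line check is that $v\subseteq u+y$ together with $u\subseteq v$ forces $v+y=u+y$ of dimension $j+k$, whence $\dim(v\cap y)=(i+j+1)+k-(j+k)=i+1$. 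The counts $[k-i]$ (lines of $(u+y)/u$) and $[h+k-i-j]-[k-i]=q^{k-i}[h-j]$ then give (iii) and (iv). All four identities among the symbols $[m]$ are verified correctly, so the argument stands as a valid standalone proof of the cited result.
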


    Next we define a subalgebra $\mathcal{K}$ of $\text{Mat}_{P}(\mathbb{C})$.

    \begin{definition}{\cite[Definition~6.1]{Watanabe}}
        The matrices 
        \begin{equation*}
            E^{*}_{i,j}\qquad \qquad 0\leq i\leq k\qquad \qquad 0\leq j\leq h
        \end{equation*} 
        form a basis for a commutative subalgebra of $\text{Mat}_{P}(\mathbb{C})$. Denote this subalgebra by $\mathcal{K}$.
    \end{definition}

    Our next goal is to describe a generating set for $\mathcal{K}$. For the rest of the paper, $q^{1/2}$ denotes the positive square root of $q$.
    
    \begin{definition}
    \label{kdef}
        We define diagonal matrices $K_1, K_2 \in \text{Mat}_{P}(\mathbb{C})$ as follows. For $u \in P$ the $(u,u)$-entry is
        \begin{equation*}
            \bigl(K_1\bigl)_{u,u}=q^{\frac{k}{2}-i}, \qquad \qquad \bigl(K_2\bigl)_{u,u}=q^{j-\frac{h}{2}},
        \end{equation*}
        where $u\in P_{i,j}$.  
    \end{definition}

    Note that $K_1,K_2$ are invertible. By \cite[Prop.~6.3]{Watanabe}, the algebra $\mathcal{K}$ is generated by $K_1^{\pm 1},K_2^{\pm 1}$.

    \begin{lemma}
    \label{kv}
        For $0\leq i\leq k$ and $0\leq j\leq h$, the subspace $E^{*}_{i,j}V$ is a common eigenspace for $K_1^{\pm1}, K_2^{\pm1}$. The corresponding eigenvalues are given in the table below.
        \begin{center}
        \begin{tabular}{c|c}
            Element in $\mathcal{K}$ & Eigenvalue corresponding to $E^{*}_{i,j}V$ \\
            \hline \\
            $K_1$ & $q^{\frac{k}{2}-i}$\\ \\
            $K_1^{-1}$ & $q^{i-\frac{k}{2}}$\\ \\
            $K_2$ & $q^{j-\frac{h}{2}}$\\ \\
            $K_2^{-1}$ & $q^{\frac{h}{2}-j}$\\ \\
        \end{tabular}
        \end{center}
    \end{lemma}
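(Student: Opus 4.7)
The proof plan is essentially to read off the action of $K_1^{\pm 1}, K_2^{\pm 1}$ on the standard basis vectors of $V$ and then pass to the span. Recall from earlier in the section that
\[
E^{*}_{i,j}V = \mathrm{Span}\bigl\{\widehat{u}\mid u\in P_{i,j}\bigr\},
\]
so it suffices to show that every $\widehat{u}$ with $u\in P_{i,j}$ is a common eigenvector for $K_1^{\pm1}, K_2^{\pm1}$ with the eigenvalues listed in the table.

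First I would note that $K_1$ and $K_2$ are diagonal with respect to the standard basis $\{\widehat{u}\mid u\in P\}$, so each $\widehat{u}$ is automatically a common eigenvector, as are $K_1^{-1}$ and $K_2^{-1}$ on the same vectors (their diagonal entries are the reciprocals of those of $K_1,K_2$, respectively, which are nonzero powers of $q$). Next, for $u\in P_{i,j}$, Definition \ref{kdef} says that $(K_1)_{u,u}=q^{k/2-i}$ and $(K_2)_{u,u}=q^{j-h/2}$, giving
\[
K_1\widehat{u}=q^{\frac{k}{2}-i}\widehat{u},\qquad K_2\widehat{u}=q^{j-\frac{h}{2}}\widehat{u},
\]
and therefore
\[
K_1^{-1}\widehat{u}=q^{i-\frac{k}{2}}\widehat{u},\qquad K_2^{-1}\widehat{u}=q^{\frac{h}{2}-j}\widehat{u}.
\]

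Finally, observe that these four eigenvalues depend only on $(i,j)$ and not on the particular $u\in P_{i,j}$. Hence they are constant across the spanning set of $E^{*}_{i,j}V$, and so $E^{*}_{i,j}V$ is a common eigenspace for $K_1^{\pm1}, K_2^{\pm1}$ with the claimed eigenvalues. There is no real obstacle here: the lemma is essentially a direct unpacking of Definition \ref{kdef} together with the description of $E^{*}_{i,j}V$ as a coordinate subspace of $V$.
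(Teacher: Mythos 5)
Your proposal is correct and follows essentially the same route as the paper, whose proof is simply ``Immediate from Definition \ref{kdef}''; you have just spelled out the routine details (diagonality of $K_1^{\pm1},K_2^{\pm1}$ and constancy of the entries on $P_{i,j}$) that the paper leaves implicit.
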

    \begin{proof}
        Immediate from Definition \ref{kdef}.
    \end{proof}
        
    Next we define some matrices in $\text{Mat}_{P}(\mathbb{C})$ that will be useful. 
    \begin{definition}
    \label{l1l2r1r2def}
        We define matrices $L_1,L_2,R_1,R_2\in \text{Mat}_{P}(\mathbb{C})$ as follows. For $u,v\in P$ their $(u,v)$-entries are
    \begin{align*}
        \bigl(L_1\bigr)_{u,v}&=\begin{cases}
            1&\text{if }v\text{ $\slash$-covers }u,\\
            0&\text{if }v\text{ does not $\slash$-cover }u,
        \end{cases}\\
        \bigl(L_2\bigr)_{u,v}&=\begin{cases}
            1&\text{if }v\text{ $\backslash$-covers }u,\\
            0&\text{if }v\text{ does not $\backslash$-cover }u,
        \end{cases}\\
        \bigl(R_1\bigr)_{u,v}&=\begin{cases}
            1&\text{if }u\text{ $\slash$-covers }v,\\
            0&\text{if }u\text{ does not $\slash$-cover }v,
        \end{cases}\\
        \bigl(R_2\bigr)_{u,v}&=\begin{cases}
            1&\text{if }u\text{ $\backslash$-covers }v,\\
            0&\text{if }u\text{ does not $\backslash$-cover }v.
        \end{cases}
    \end{align*}
    
    Note that $R_1=L_1^{t}$ and $R_2=L_2^{t}$, where $t$ is the matrix-transpose. 
    \end{definition}

    \begin{lemma}
    \label{l1l2r1r2v}
        For $v\in P$,
        \begin{align*}
            L_1\widehat{v}&=\sum_{\text{$v$ $\slash$-covers $u$}}\widehat{u},&L_2\widehat{v}=\sum_{\text{$v$ $\backslash$-covers $u$}}\widehat{u},\\
            R_1\widehat{v}&=\sum_{\text{$u$ $\slash$-covers $v$}}\widehat{u}, &R_2\widehat{v}=\sum_{\text{$u$ $\backslash$-covers $v$}}\widehat{u}.\\
        \end{align*}
    \end{lemma}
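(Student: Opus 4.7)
The plan is to observe that for any matrix $M \in \text{Mat}_{P}(\mathbb{C})$ and any $v \in P$, the column vector $M\widehat{v}$ is just the $v$-th column of $M$; equivalently, $M\widehat{v} = \sum_{u \in P} M_{u,v}\, \widehat{u}$. This is immediate from the fact that $\widehat{v}$ has $v$-entry $1$ and all other entries $0$, together with the definition of matrix multiplication on the basis $\{\widehat{u} \mid u \in P\}$ of $V$. Thus each of the four claimed identities reduces to reading off, from Definition \ref{l1l2r1r2def}, the set of indices $u$ at which the relevant $(u,v)$-entry equals $1$.

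Applied to $M = L_1$, Definition \ref{l1l2r1r2def} gives $(L_1)_{u,v} = 1$ exactly when $v$ $\slash$-covers $u$, and $0$ otherwise; summing $\widehat{u}$ over the indices where the entry equals $1$ yields $L_1\widehat{v} = \sum_{v\; \slash\text{-covers}\; u} \widehat{u}$. The same argument applied to $L_2$ (using the $\backslash$-cover condition in Definition \ref{l1l2r1r2def}) gives the second formula. For $R_1$ and $R_2$, the roles of $u$ and $v$ in the defining conditions are swapped, since $R_1 = L_1^{t}$ and $R_2 = L_2^{t}$; thus $(R_1)_{u,v} = 1$ precisely when $u$ $\slash$-covers $v$, and $(R_2)_{u,v} = 1$ precisely when $u$ $\backslash$-covers $v$, which give the last two formulas.

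There is no substantive obstacle here. The lemma is a direct translation of Definition \ref{l1l2r1r2def} into the language of the action of $L_1, L_2, R_1, R_2$ on the standard basis, and the entire argument rests on the identity $M\widehat{v} = \sum_{u} M_{u,v}\, \widehat{u}$ followed by inspection of which matrix entries are nonzero.
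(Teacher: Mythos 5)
Your proof is correct and follows exactly the route the paper intends: the paper's proof is simply ``Immediate from Definition \ref{l1l2r1r2def},'' and your identity $M\widehat{v}=\sum_{u}M_{u,v}\,\widehat{u}$ together with reading off the nonzero entries is the obvious expansion of that remark. No issues.
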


    \begin{proof}
        Immediate from Definition \ref{l1l2r1r2def}.
    \end{proof}

    \begin{lemma}
    \label{lrv}
        For $0\leq i\leq k$ and $0\leq j\leq h$,
        \begin{align}
            L_1E^{*}_{i,j}V&\subseteq E^{*}_{i-1,j}V, &L_2E^{*}_{i,j}V\subseteq E^{*}_{i,j-1}V, \label{lev}\\
            R_1E^{*}_{i,j}V&\subseteq E^{*}_{i+1,j}V, &R_2E^{*}_{i,j}V\subseteq E^{*}_{i,j+1}V. \label{rev}
        \end{align}
    \end{lemma}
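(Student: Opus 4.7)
The plan is to verify each of the four inclusions by working on the basis $\bigl\{\widehat{v}\mid v\in P_{i,j}\bigr\}$ of $E^{*}_{i,j}V$. Since $E^{*}_{i,j}V=\mathrm{Span}\bigl\{\widehat{v}\mid v\in P_{i,j}\bigr\}$, each inclusion reduces to showing that for $v\in P_{i,j}$, the image under the relevant matrix lies in the span of $\widehat{u}$ for $u$ in the predicted block $P_{r,s}$.

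I would handle the four cases in parallel using Lemma \ref{l1l2r1r2v}, which expresses each of $L_1\widehat{v}, L_2\widehat{v}, R_1\widehat{v}, R_2\widehat{v}$ as a sum of $\widehat{u}$ where $u$ and $v$ are related by a $\slash$- or $\backslash$-cover. The one nontrivial input is the dimension bookkeeping from Definition \ref{slashcover}: if $v\in P_{i,j}$ and $v$ $\slash$-covers $u$, then reading Definition \ref{slashcover} with $u$ as the lower element and $v$ as the upper element forces $u\in P_{i-1,j}$; similarly, if $v$ $\backslash$-covers $u$, then $u\in P_{i,j-1}$. Plugging these into the formulas of Lemma \ref{l1l2r1r2v} yields
\[
L_1\widehat{v}\in E^{*}_{i-1,j}V,\qquad L_2\widehat{v}\in E^{*}_{i,j-1}V,
\]
which gives (\ref{lev}). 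The two inclusions in (\ref{rev}) follow by the same argument applied with the roles of $u$ and $v$ interchanged: for $v\in P_{i,j}$, if $u$ $\slash$-covers $v$ then Definition \ref{slashcover} gives $u\in P_{i+1,j}$, and if $u$ $\backslash$-covers $v$ then $u\in P_{i,j+1}$.

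There is no real obstacle; the argument is a direct bookkeeping check. The only place where a reader could slip is in correctly orienting Definition \ref{slashcover} (which element is the covering one, which is the covered one), so I would state explicitly which instance of that definition is being invoked in each of the four cases. The boundary situations in which $i=0$, $i=k$, $j=0$, or $j=h$ are handled automatically by the convention $E^{*}_{r,s}=0$ outside the allowed range, together with the fact that in those extremal cases the corresponding sum in Lemma \ref{l1l2r1r2v} is empty.
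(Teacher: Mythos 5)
Your proof is correct and matches the paper's argument, which simply cites Lemma \ref{l1l2r1r2v} (together with the bookkeeping in Definition \ref{slashcover}) as immediate; your write-up just makes that bookkeeping and the boundary convention explicit. No issues.
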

    \begin{proof}
        Immediate from Lemma \ref{l1l2r1r2v}.
    \end{proof}    
    
    \begin{definition}
    \label{hdef}
        Let $\mathcal{H}$ denote the subalgebra of $\text{Mat}_{P}(\mathbb{C})$ generated by $L_1, L_2, R_1, R_2, K_1^{\pm 1}, K_2^{\pm 1}$. 
    \end{definition}

    By construction, the vector space $V$ is an $\mathcal{H}$-module. Let $W$ denote an $\mathcal{H}$-submodule in $V$. We say that $W$ is \emph{irreducible} whenever $W$ does not contain an $\mathcal{H}$-submodule besides $0$ or $W$. Note that $\mathcal{H}$ is closed under the conjugate-transpose map. Therefore the $\mathcal{H}$-module $V$ is a direct sum of irreducible $\mathcal{H}$-modules.

    \begin{lemma}{\cite[Lemma~8.2,~8.5,~8.11]{Watanabe}}
    \label{typelem}
        Let $W$ denote an irreducible $\mathcal{H}$-module. Then there exist integers $\alpha,\beta,\rho$ such that the following (i), (ii) hold:
        \begin{enumerate}[label=(\roman*)]
            \item 
            \begin{equation}
            \label{abcondition}
                0\leq \rho,\qquad \qquad 0\leq \alpha\leq \frac{k-\rho}{2}, \qquad \qquad 0\leq \beta\leq \frac{h-\rho}{2};
            \end{equation}

            \item for $0\leq i\leq k$ and $0\leq j\leq h$,
            \begin{equation*}
            \dim E^{*}_{i,j}W=\begin{cases}
                1&\text{if }\alpha\leq i\leq k-\rho-\alpha,\;\rho+\beta\leq j\leq h-\beta,\\
                0&\text{otherwise}.
            \end{cases}
        \end{equation*}
        \end{enumerate}
    \end{lemma}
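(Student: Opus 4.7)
The plan is to exploit the weight decomposition $W=\bigoplus_{i,j}E^{*}_{i,j}W$ that $W$ inherits from Lemma~\ref{kv}, in which $L_1, R_1$ shift the index $i$ by $\mp 1$ and $L_2, R_2$ shift the index $j$ by $\mp 1$ (Lemma~\ref{lrv}). First I would locate a ``lowest weight'' vector: let $S=\{(i,j):E^{*}_{i,j}W\neq 0\}$ and choose $(\alpha,j_0)\in S$ with $\alpha$ minimal and then $j_0$ minimal among such pairs. Pick a nonzero $w_0\in E^{*}_{\alpha,j_0}W$. By minimality, $L_1 w_0\in E^{*}_{\alpha-1,j_0}W=0$ and $L_2 w_0\in E^{*}_{\alpha,j_0-1}W=0$, so $w_0$ is annihilated by both lowering operators. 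I would then define $\beta,\rho$ by $j_0=\rho+\beta$, where $\rho$ encodes a coupling parameter dictated by the $K_1,K_2$-eigenvalues of $w_0$ together with the Watanabe relations between $(L_1,R_1)$ and $(L_2,R_2)$.

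Next I would invoke irreducibility to write $W=\mathcal{H}w_0$. Using the relations of \cite[Section~7]{Watanabe} to push every $L_i$ past every $R_j$ and every $K_j^{\pm 1}$ in an arbitrary monomial in the generators, and then using $L_1 w_0=L_2 w_0=0$ and the fact that $w_0$ is a common $K_1,K_2$-eigenvector, one reduces to
\begin{equation*}
W=\mathrm{Span}\bigl\{R_1^{a}R_2^{b}w_0:a,b\geq 0\bigr\}.
\end{equation*}
Since $R_1^{a}R_2^{b}w_0\in E^{*}_{\alpha+a,\rho+\beta+b}W$ by Lemma~\ref{lrv}, this immediately yields $\dim E^{*}_{i,j}W\leq 1$ for every $(i,j)$ and confines $S$ to the quadrant $\{i\geq\alpha,\;j\geq \rho+\beta\}$.

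Finally I would determine the upper boundary of $S$ by computing $L_1 R_1^{a}R_2^{b}w_0$ and $L_2 R_1^{a}R_2^{b}w_0$ via the $\mathfrak{sl}_2$-like commutators furnished by Watanabe; each should equal a quantum-integer scalar times $R_1^{a-1}R_2^{b}w_0$ or $R_1^{a}R_2^{b-1}w_0$, up to corrections coming from the mixed commutators $[L_1,R_2]$ and $[L_2,R_1]$. These scalars are polynomial in $q^{a},q^{b}$ and in the $K_1,K_2$-eigenvalues of $w_0$; they vanish first at $a=k-\rho-2\alpha$ and $b=h-2\beta$, forcing $R_1^{a+1}R_2^{b}w_0=0$ and $R_1^{a}R_2^{b+1}w_0=0$ at those thresholds. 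This would produce exactly the rectangular support claimed in the lemma, and non-negativity of $k-\rho-2\alpha$ and $h-2\beta$ would yield the constraints (\ref{abcondition}). The main obstacle is managing the mixed commutators $[L_1,R_2]$ and $[L_2,R_1]$, which prevent the two $\mathfrak{sl}_2$-copies from decoupling cleanly; careful use of Watanabe's specific identities is needed to ensure that the support is \emph{exactly} the rectangle and not some proper subregion, and to justify integrality of $\alpha,\beta,\rho$ rather than half-integrality.
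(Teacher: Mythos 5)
Note first that the paper does not prove this lemma at all: it is quoted directly from \cite[Lemma~8.2,~8.5,~8.11]{Watanabe}, so your argument must be judged as an attempt to redo Watanabe's classification rather than compared with any proof in this paper.

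Your sketch has a genuine gap at the spanning step. After producing $w_0\in E^{*}_{\alpha,j_0}W$ with $L_1w_0=L_2w_0=0$, you assert that the Section~7 relations let you straighten an arbitrary monomial in the generators so that $W=\mathcal{H}w_0=\mathrm{Span}\{R_1^{a}R_2^{b}w_0\}$. But the available relations never express $L_1R_1-R_1L_1$ or $L_2R_2-R_2L_2$ \emph{individually} in terms of $K_1^{\pm1},K_2^{\pm1}$; only their sum is diagonal (Lemma \ref{appendix4}). So when you try to push $L_1$ past $R_1$ you only learn that $L_1R_1w_0$ lies in $E^{*}_{\alpha,j_0}W$, and you cannot yet say this space is $\mathbb{C}w_0$ --- one-dimensionality of the weight spaces is precisely the content of part (ii) you are trying to prove. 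Equivalently, the weight-preserving part of $\mathcal{H}$ is strictly larger than the subalgebra generated by $K_1^{\pm1},K_2^{\pm1}$: it contains $F^{0},F^{+},F^{-}$ (Lemmas \ref{f0}--\ref{f-}), and without controlling their action on $w_0$ the straightening does not terminate. You have also misidentified the obstacle: the mixed commutators $[L_1,R_2]$ and $[L_2,R_1]$ vanish (Lemma \ref{appendix2}), so they cause no difficulty; the problem is $[L_1,R_1]$ and $[L_2,R_2]$. Smaller but real issues: $\rho$ and $\beta$ are never actually defined (``a coupling parameter'' is not a definition), the constraints $0\leq\rho$ and integrality are never established, and with $j_0=\rho+\beta$ and $j\leq h-\beta$ the truncation threshold in $b$ should be $h-\rho-2\beta$, not $h-2\beta$. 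Repairing all of this essentially amounts to reproducing Watanabe's analysis (which rests on the relation between $\mathcal{H}$ and modules for $U_q(\widehat{\mathfrak{sl}}_2)$), which is exactly why the paper cites his result instead of reproving it.
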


    \begin{definition}
        Let $W$ denote an irreducible $\mathcal{H}$-module. Referring to Lemma \ref{typelem}, we call the triple $(\alpha,\beta,\rho)$ the \emph{type of }$W$. 
    \end{definition}

    \begin{lemma}{\cite[p.~133]{Liang}}
    \label{unique}
        For each triple $(\alpha,\beta,\rho)$ of integers that satisfy (\ref{abcondition}), there exists an irreducible $\mathcal{H}$-module of type $(\alpha,\beta,\rho)$. This $\mathcal{H}$-module is unique up to isomorphism of $\mathcal{H}$-modules.
    \end{lemma}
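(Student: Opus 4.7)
The plan is to establish existence and uniqueness separately, following the approach of Watanabe \cite[Section~8]{Watanabe} and Liang \cite[Section~3]{Liang}.

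For existence, I would construct an explicit model. Fix a triple $(\alpha,\beta,\rho)$ satisfying (\ref{abcondition}), let
\begin{equation*}
I=\bigl\{(i,j) : \alpha\leq i\leq k-\rho-\alpha,\; \rho+\beta\leq j\leq h-\beta\bigr\},
\end{equation*}
and let $W$ be the $\mathbb{C}$-vector space with basis $\bigl\{w_{i,j} : (i,j)\in I\bigr\}$. I would declare each $E^{*}_{i,j}$ to act as the projection onto the line $\mathbb{C}w_{i,j}$ when $(i,j)\in I$ and as $0$ otherwise; the actions of $K_1^{\pm 1}, K_2^{\pm 1}$ are then forced by Lemma \ref{kv}. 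I would define $L_1,L_2,R_1,R_2$ by formulas of the shape
\begin{equation*}
L_1 w_{i,j}=a_{i,j}w_{i-1,j},\quad R_1 w_{i,j}=b_{i,j}w_{i+1,j},\quad L_2 w_{i,j}=c_{i,j}w_{i,j-1},\quad R_2 w_{i,j}=d_{i,j}w_{i,j+1},
\end{equation*}
with the convention that a coefficient is $0$ when the target index falls outside $I$. The scalars $a_{i,j},b_{i,j},c_{i,j},d_{i,j}$ are then to be chosen so that every defining relation of $\mathcal{H}$ listed in \cite[Section~7]{Watanabe} is satisfied; up to rescaling of each basis vector, a solution exists and is essentially unique. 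Irreducibility then follows because all interior coefficients are nonzero, so repeated application of $R_1,R_2,L_1,L_2$ carries any nonzero weight vector to a nonzero multiple of any other basis element.

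For uniqueness, let $W$ and $W'$ be two irreducible $\mathcal{H}$-modules of type $(\alpha,\beta,\rho)$. By Lemma \ref{typelem}, both $E^{*}_{\alpha,\rho+\beta}W$ and $E^{*}_{\alpha,\rho+\beta}W'$ are 1-dimensional; choose nonzero vectors $w$ and $w'$ in them. Every other 1-dimensional weight space of $W$ is spanned by a suitable monomial in $R_1,R_2$ applied to $w$, since these operators must act nontrivially between consecutive weight spaces (otherwise the kernel would generate a proper $\mathcal{H}$-submodule, contradicting irreducibility). The $\mathcal{H}$-relations then pin down the action of $L_1,L_2$ in terms of the action of $R_1,R_2$, so the normalization $w\mapsto w'$ extends uniquely to an $\mathcal{H}$-module isomorphism $W\to W'$.

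The main obstacle is, in the existence step, to verify that the coefficients $a_{i,j},b_{i,j},c_{i,j},d_{i,j}$ really admit a consistent choice satisfying every relation among the $\mathcal{H}$-generators. This reduces to a family of polynomial identities in $q^i,q^j$ and the type parameters $\alpha,\beta,\rho$; I would cite the corresponding identities from \cite[Section~7]{Watanabe} and the module classification in \cite[Section~8]{Watanabe} rather than rederive them. A secondary subtlety is to confirm that the constructed actions vanish correctly at the boundary of $I$, which is again encoded in the explicit formulas of Watanabe's classification.
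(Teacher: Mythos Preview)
The paper gives no proof of this lemma at all; it is simply cited from \cite[p.~133]{Liang}. So there is nothing to compare your argument against except the literature itself.

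Your sketch is essentially the standard highest-weight argument and is correct in spirit, but there is one genuine gap in the existence half. You propose to build $W$ as an abstract vector space and to define operators $L_1,L_2,R_1,R_2,K_1^{\pm1},K_2^{\pm1}$ on it satisfying ``every defining relation of $\mathcal{H}$ listed in \cite[Section~7]{Watanabe}.'' The problem is that $\mathcal{H}$ is \emph{not} defined by generators and relations in this paper: it is introduced concretely as a subalgebra of $\text{Mat}_P(\mathbb{C})$ (Definition~\ref{hdef}). The relations in the appendix are relations that \emph{hold} in $\mathcal{H}$, but nothing here asserts they form a presentation. So an abstract vector space on which those relations are satisfied is not automatically an $\mathcal{H}$-module. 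To close this gap you must either (a) invoke a genuine presentation of $\mathcal{H}$---Watanabe does prove one via the surjection from $U_q(\widehat{\mathfrak{sl}}_2)$, and you should cite that explicitly rather than Section~7---or (b) realize each module of type $(\alpha,\beta,\rho)$ as an actual $\mathcal{H}$-submodule of the standard module $V$, which is what Watanabe in fact does in \cite[Section~8]{Watanabe} and what the cited page of \cite{Liang} relies on.

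Your uniqueness argument is fine: once both $W$ and $W'$ are known to be $\mathcal{H}$-modules, the one-dimensionality of weight spaces together with irreducibility forces the isomorphism exactly as you describe.
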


    We illustrate Lemma \ref{typelem} using the diagram below. 

    \begin{figure}[!ht]
\centering
{%
\begin{circuitikz}
\tikzstyle{every node}=[font=\normalsize]
\draw [short] (2.5,-3.5) -- (-1,0);
\draw [short] (-1,0) -- (1.5,2.5);
\draw [short] (1.5,2.5) -- (5,-1);
\draw [short] (5,-1) -- (2.5,-3.5);
\draw [ color=red , fill=pink, rotate around={45:(2,2)}] (0.5,1.25) rectangle (-0.25,-0.5);
\draw [<->, >=Stealth] (2.1,-0.2) -- (3.15,0.85);
\draw [<->, >=Stealth] (0.2,1.2) -- (1.22,0.18);
\draw [<->, >=Stealth] (3.7,-2.3) -- (2.47,-1.07);
\draw [<->, >=Stealth] (1.55,-0.75) -- (0.65,-1.65);
\node [font=\normalsize] at (1.5,2.85) {$\mathcal{V}$};
\node [font=\normalsize] at (5.25,-1) {$y$};
\node [font=\normalsize] at (2.5,-3.75) {$0$};
\node [font=\normalsize] at (1.75,0.75) {$W$};
\node [font=\normalsize] at (-4,0) {$P:$};
\node [font=\normalsize] at (8,0) {$\;$};
\node [font=\scriptsize, rotate around={45:(0,0)}] at (1.3,-1.35) {$\alpha$};
\node [font=\scriptsize, rotate around={45:(0,0)}] at (2.85,0.15) {$\rho+\alpha$};
\node [font=\scriptsize, rotate around={-45:(0,0)}] at (2.95,-1.85) {$\rho+\beta$};
\node [font=\scriptsize, rotate around={-45:(0,0)}] at (0.55,0.5) {$\beta$};

\end{circuitikz}
}%
\caption{The irreducible $\mathcal{H}$-module $W$ is represented by the red rectangle.}
\label{locationabp}
\end{figure}
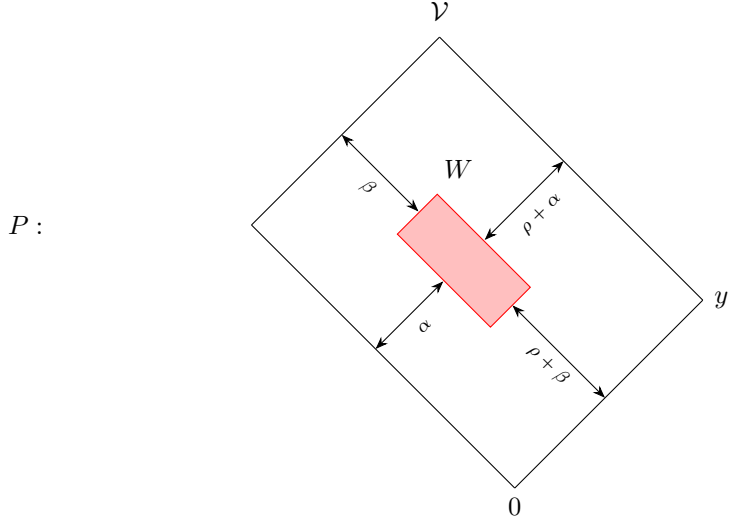
 \newpage
      
    Our next goal is to describe the action of the $\mathcal{H}$-generators on the irreducible $\mathcal{H}$-modules. For the rest of this section, let $W$ denote an irreducible $\mathcal{H}$-module of type $(\alpha,\beta,\rho)$.

    \begin{lemma}
        \label{klem}
        For $\alpha\leq i\leq k-\rho-\alpha$ and $\rho+\beta\leq j\leq h-\beta$, the subspace $E^{*}_{i,j}W$ is a common eigenspace for $K_1^{\pm 1}, K_2^{\pm 1}$. 
        The corresponding eigenvalues are given in the table below.
        \begin{center}
        \begin{tabular}{c|c}
            Element in $\mathcal{H}$ & Eigenvalue corresponding to $E^{*}_{i,j}W$ \\
            \hline \\
            $K_1$ & $q^{\frac{k}{2}-i}$\\ \\
            $K_1^{-1}$ & $q^{i-\frac{k}{2}}$\\ \\
            $K_2$ & $q^{j-\frac{h}{2}}$\\ \\
            $K_2^{-1}$ & $q^{\frac{h}{2}-j}$\\ \\
        \end{tabular}
        \end{center}
    \end{lemma}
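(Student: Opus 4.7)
The plan is to reduce this to Lemma \ref{kv}, which gives the analogous statement for $E^{*}_{i,j}V$. The key observation is that since $W$ is an $\mathcal{H}$-submodule of $V$, and since $\mathcal{K} \subseteq \mathcal{H}$ (because $\mathcal{K}$ is generated by $K_1^{\pm 1}, K_2^{\pm 1}$, which are among the generators of $\mathcal{H}$ by Definition \ref{hdef}), every element of $\mathcal{K}$ preserves $W$. In particular, $E^{*}_{i,j}W \subseteq W$.

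First I would note that $E^{*}_{i,j}W \subseteq E^{*}_{i,j}V$, because $E^{*}_{i,j}$ is an idempotent whose image (on all of $V$) is $E^{*}_{i,j}V$. Next, I would apply Lemma \ref{kv}, which says that $E^{*}_{i,j}V$ is a common eigenspace for $K_1^{\pm1}, K_2^{\pm1}$ with eigenvalues $q^{k/2-i}, q^{i-k/2}, q^{j-h/2}, q^{h/2-j}$ respectively. Since $E^{*}_{i,j}W$ is a subspace of $E^{*}_{i,j}V$, the matrices $K_1^{\pm1}, K_2^{\pm1}$ act on $E^{*}_{i,j}W$ by the same scalars.

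For the range $\alpha\leq i\leq k-\rho-\alpha$ and $\rho+\beta\leq j\leq h-\beta$, Lemma \ref{typelem}(ii) tells us that $E^{*}_{i,j}W$ is one-dimensional, hence nonzero, so it is genuinely a common eigenspace (not merely contained in one trivially). This gives the claimed eigenvalue table.

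There is no real obstacle here; the lemma is essentially an immediate specialization of Lemma \ref{kv} to the $\mathcal{H}$-submodule $W$, with Lemma \ref{typelem}(ii) ensuring that the relevant subspaces are nonzero. The proof should consist of one or two sentences invoking these two facts.
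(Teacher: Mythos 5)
Your proposal is correct and follows exactly the paper's route: the paper's proof is simply ``Immediate from Lemma \ref{kv},'' and your elaboration (that $E^{*}_{i,j}W\subseteq E^{*}_{i,j}V$, so the scalars from Lemma \ref{kv} persist, with Lemma \ref{typelem}(ii) guaranteeing nonzeroness) is just a fuller spelling-out of that same reduction.
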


    \begin{proof}
        Immediate from Lemma \ref{kv}.
    \end{proof}

    \begin{lemma}
    \label{lrw}
        For $\alpha\leq i\leq k-\rho-\alpha$ and $\rho+\beta\leq j\leq h-\beta$,
        \begin{equation}
        \label{le}
            L_1E^{*}_{i,j}W\subseteq E^{*}_{i-1,j}W,\qquad \qquad L_2E^{*}_{i,j}W\subseteq E^{*}_{i,j-1}W, 
        \end{equation}
        \begin{equation}
        \label{re}
            R_1E^{*}_{i,j}W\subseteq E^{*}_{i+1,j}W, \qquad \qquad R_2E^{*}_{i,j}W\subseteq E^{*}_{i,j+1}W.
        \end{equation}
    \end{lemma}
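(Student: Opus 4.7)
The plan is to reduce to Lemma \ref{lrv}, using that $W$ is invariant under $\mathcal{H}$ and that each $E^{*}_{i,j}$ belongs to $\mathcal{H}$. By Definition \ref{hdef}, the matrices $L_1, L_2, R_1, R_2$ are among the $\mathcal{H}$-generators, so each of them preserves $W$. Moreover, $E^{*}_{i,j}$ lies in the subalgebra $\mathcal{K}$, which by \cite[Prop.~6.3]{Watanabe} is generated by $K_1^{\pm 1}, K_2^{\pm 1}$; since these too are $\mathcal{H}$-generators, we have $\mathcal{K}\subseteq \mathcal{H}$, and in particular $E^{*}_{i,j}W\subseteq W$.

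From $I=\sum_{i,j}E^{*}_{i,j}$ together with the fact that each $E^{*}_{i,j}$ is an idempotent that preserves $W$, I would deduce the direct sum decomposition $W=\bigoplus_{i,j}E^{*}_{i,j}W$, as well as the equality $E^{*}_{i,j}V\cap W=E^{*}_{i,j}W$ for all $i,j$. With these preparations, the first claim follows immediately: for $w\in E^{*}_{i,j}W$, invariance gives $L_1 w\in W$, while Lemma \ref{lrv} gives $L_1 w\in E^{*}_{i-1,j}V$, so $L_1w\in E^{*}_{i-1,j}V\cap W=E^{*}_{i-1,j}W$. The remaining three inclusions for $L_2, R_1, R_2$ follow by the identical argument, invoking the corresponding containments in Lemma \ref{lrv}.

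There is no real obstacle here; the only point worth noting is the behavior on the boundary of the rectangle described by Lemma \ref{typelem}. For example, when $i=\alpha$ we have $E^{*}_{i-1,j}W=0$, and the argument automatically yields $L_1E^{*}_{\alpha,j}W=0$. Analogous comments apply at the three other edges of the rectangle, so the stated inclusions hold without any special treatment of the boundary indices.
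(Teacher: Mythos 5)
Your argument is correct and is exactly the reasoning the paper compresses into ``Immediate from Lemma \ref{lrv}'': since $W$ is an $\mathcal{H}$-submodule and each $E^{*}_{i,j}\in\mathcal{K}\subseteq\mathcal{H}$, one has $E^{*}_{i,j}V\cap W=E^{*}_{i,j}W$, and intersecting the containments of Lemma \ref{lrv} with $W$ gives the claim, including the vanishing at the boundary of the rectangle from Lemma \ref{typelem}. No gaps; you have simply made explicit what the paper leaves implicit.
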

    \begin{proof}
        Immediate from Lemma \ref{lrv}.
    \end{proof}    

    The following result is a variation on \cite[Prop.~8.7]{Watanabe}.

    \begin{lemma}
    \label{l1l2r1r2w}
        The irreducible $\mathcal{H}$-module $W$ has a basis
        \begin{equation}
        \label{wnm}
            w_{i,j} \qquad \qquad \alpha\leq i\leq k-\rho-\alpha \qquad \qquad \rho+\beta\leq j\leq h-\beta
        \end{equation}
        with the following properties. For $\alpha\leq i\leq k-\rho-\alpha$ and $\rho+\beta\leq j\leq h-\beta$,
        \begin{align}
        w_{i,j}&\in E^{*}_{i,j}W,\nonumber\\
            L_1 w_{i,j}&=q^{\frac{\rho+\alpha+\beta+i+j-1}{2}}[k-\rho-\alpha-i+1]w_{i-1,j},\label{l1w}\\
            L_2w_{i,j}&=q^{k-\frac{\rho+\alpha-\beta+i-j+1}{2}}[h-\beta-j+1]w_{i,j-1},\label{l2w}\\
            R_1w_{i,j}&=q^{-\frac{\rho-\alpha+\beta+i-j}{2}}[i-\alpha+1]w_{i+1,j},\label{r1w}\\
            R_2w_{i,j}&=q^{\frac{\rho+\alpha+\beta-i-j}{2}}[j-\rho-\beta+1]w_{i,j+1}.\label{r2w}
        \end{align}
        In the above lines, we interpret $w_{r,s}=0$ unless $\alpha\leq r\leq k-\rho-\alpha$ and $\rho+\beta\leq s\leq h-\beta$.
    \end{lemma}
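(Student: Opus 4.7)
The plan is to deduce the lemma from Watanabe's \cite[Prop.~8.7]{Watanabe}, which supplies a basis $\{w'_{i,j}\}$ of $W$ indexed by the same set, with $w'_{i,j}\in E^{*}_{i,j}W$, and with explicit (but differently normalized) formulas for the action of $L_1, L_2, R_1, R_2$ on the $w'_{i,j}$. Since each relevant $E^{*}_{i,j}W$ is one-dimensional by Lemma \ref{typelem}(ii), any candidate basis $\{w_{i,j}\}$ satisfying $w_{i,j}\in E^{*}_{i,j}W$ must have the form $w_{i,j}=c_{i,j}w'_{i,j}$ for unique nonzero scalars $c_{i,j}\in\mathbb{C}^{*}$. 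The task reduces to choosing the $c_{i,j}$ so that (\ref{l1w})--(\ref{r2w}) hold simultaneously.

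Concretely, I would set $c_{\alpha,\rho+\beta}=1$ and define the remaining $c_{i,j}$ recursively: first propagate $c$ rightward along the row $j=\rho+\beta$ by requiring (\ref{r1w}) to hold, then propagate upward in $j$ by requiring (\ref{r2w}) to hold. At each step the ratio $c_{i+1,j}/c_{i,j}$ or $c_{i,j+1}/c_{i,j}$ is expressed as a quotient of the target coefficient and Watanabe's coefficient, both of which are explicit $q$-integer expressions. With this choice, (\ref{r2w}) holds for all $(i,j)$ and (\ref{r1w}) holds on the bottom row by construction.

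It remains to verify (\ref{r1w}) for $j>\rho+\beta$ as well as (\ref{l1w}) and (\ref{l2w}) in general. The key observation is that compositions such as $L_1R_1$, $R_1L_1$, $L_2R_2$, $R_2L_2$ each send $E^{*}_{i,j}W$ into itself and therefore act on this one-dimensional subspace by a scalar that is intrinsic to $W$ and independent of any basis choice; likewise, the ratio of the scalars by which $R_1R_2$ and $R_2R_1$ act from $E^{*}_{i,j}W$ to $E^{*}_{i+1,j+1}W$ is intrinsic. Equating these intrinsic scalars computed in Watanabe's basis with those computed from the target formulas propagates (\ref{r1w}) from the bottom row to every row (plaquette closure) and uniquely pins down the $L_1$, $L_2$ coefficients in (\ref{l1w}), (\ref{l2w}) once those for $R_1$, $R_2$ are fixed.

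The main obstacle is bookkeeping: carefully aligning Watanabe's $q$-power conventions and choice of corner basis vector with the normalization adopted in (\ref{l1w})--(\ref{r2w}), and checking that the plaquette and $LR$-product identities match on the nose. Once the conventions are aligned, every required identity reduces to an elementary manipulation of $q$-integers, and the verification of (\ref{l1w})--(\ref{r2w}) is routine.
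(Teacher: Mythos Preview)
Your proposal is correct and follows essentially the same approach as the paper: rescale Watanabe's basis $\{w'_{n,m}\}$ by nonzero scalars $c_{i,j}$ to obtain the desired $\{w_{i,j}\}$. The paper short-circuits your recursive construction and plaquette argument by simply writing down the closed form $w_{i,j}=q^{-(i-\alpha)(j-\rho-\beta)/2}\,w'_{i-\alpha,\,j-\rho-\beta}$ and verifying (\ref{l1w})--(\ref{r2w}) directly from \cite[Prop.~8.7]{Watanabe}.
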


    \begin{proof}
        Let
        \begin{equation*}
            w'_{n,m}\qquad \qquad 0\leq n\leq k-2\alpha-\rho \qquad \qquad  0\leq m\leq h-2\beta-\rho
        \end{equation*}
        denote the basis from \cite[Prop.~8.7]{Watanabe}. Note that
        \begin{equation*}
            w'_{n,m}\in E^{*}_{\alpha+n,\;\rho+\beta+m}W.
        \end{equation*} 
        Define 
        \begin{equation}
        \label{w'nm}
            w_{i,j}=q^{-\frac{(i-\alpha)(j-\rho-\beta)}{2}}w'_{i-\alpha,\;j-\rho-\beta}\qquad \qquad \alpha\leq i\leq k-\rho-\alpha \qquad \qquad \rho+\beta\leq j\leq h-\beta.
        \end{equation}
        Then we have
        \begin{equation*}
            w_{i,j}\in E^{*}_{i,j}W.
        \end{equation*}
        By Lemma \ref{typelem}(ii), the vectors in (\ref{w'nm}) form a basis for $W$. Equations (\ref{l1w})--(\ref{r2w}) follow from (\ref{w'nm}) and \cite[Prop.~8.7]{Watanabe}.  
    \end{proof} 

    \begin{definition}
        A basis for $W$ that satisfies Lemma \ref{l1l2r1r2w} will be called \emph{standard}.
    \end{definition}
    
    \begin{lemma}
    \label{double}
        For $\alpha\leq i\leq k-\rho-\alpha$ and $\rho+\beta\leq j\leq h-\beta$, the subspace $E^{*}_{i,j}W$ is invariant under each of
        \begin{equation*}
            L_1R_1, \qquad \qquad R_1L_1,\qquad \qquad L_2R_2, \qquad \qquad R_2L_2.
        \end{equation*}
        The corresponding eigenvalues are given in the table below.
        \begin{center}
        \begin{tabular}{c|c}
            Element in $\mathcal{H}$ & Eigenvalue corresponding to $E^{*}_{i,j}W$ \\
            \hline \\
            $L_1R_1$ & $q^{\alpha+j}[i-\alpha+1][k-\rho-\alpha-i]$\\ \\
            $R_1L_1$ & $q^{\alpha+j}[i-\alpha][k-\rho-\alpha-i+1]$\\ \\
            $L_2R_2$ & $q^{k+\beta-i}[j-\rho-\beta+1][h-\beta-j]$\\ \\
            $R_2L_2$ & $q^{k+\beta-i}[j-\rho-\beta][h-\beta-j+1]$            
        \end{tabular}
        \end{center}
    \end{lemma}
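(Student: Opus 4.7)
The plan is to exploit the fact that $E^*_{i,j}W$ is one-dimensional (by Lemma \ref{typelem}(ii), spanned by the standard basis vector $w_{i,j}$ from Lemma \ref{l1l2r1r2w}) and to directly compute the action of each of the four composite operators on $w_{i,j}$ using the explicit formulas (\ref{l1w})--(\ref{r2w}). Since Lemma \ref{lrw} already guarantees that $R_1$ and $L_1$ shift the index $i$ by $\pm 1$ while fixing $j$, and $R_2, L_2$ do the analogous thing with $j$, each composition $L_1R_1, R_1L_1, L_2R_2, R_2L_2$ sends $E^*_{i,j}W$ into itself. Hence the invariance claim is immediate, and only the eigenvalue computation remains.

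For the eigenvalue on $L_1R_1$, I would first apply (\ref{r1w}) to get $R_1 w_{i,j}$ as a scalar multiple of $w_{i+1,j}$, then apply (\ref{l1w}) (with $i$ replaced by $i+1$) to land back on $w_{i,j}$. Collecting the two $q$-powers yields a net exponent $\tfrac{-(\rho-\alpha+\beta+i-j)+(\rho+\alpha+\beta+i+j)}{2}=\alpha+j$, and the two bracket factors combine to $[i-\alpha+1][k-\rho-\alpha-i]$, matching the first line of the table. The other three cases are entirely parallel: for $R_1L_1$ apply (\ref{l1w}) then (\ref{r1w}); for $L_2R_2$ apply (\ref{r2w}) then (\ref{l2w}); for $R_2L_2$ apply (\ref{l2w}) then (\ref{r2w}). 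In each case the two $q$-exponents telescope into the claimed expression ($\alpha+j$ in the $1$-cases and $k+\beta-i$ in the $2$-cases), and the two bracket factors give the stated product.

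One boundary point to verify is that the formulas remain correct when $i$ or $j$ is at an extreme of the allowed range, where $w_{i\pm 1, j}$ or $w_{i, j\pm 1}$ may be zero. This is not actually an obstacle: when, say, $i=k-\rho-\alpha$, the factor $[k-\rho-\alpha-i]=[0]=0$ appears in the $L_1R_1$ eigenvalue, so both sides of $L_1R_1\,w_{i,j}=0$ agree automatically, and similarly at the other three extremes. Thus the table entries are valid uniformly across the stated range of $(i,j)$.

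Nothing deep is required here; the only care needed is bookkeeping of the half-integer exponents of $q$ and the shifts in the bracket arguments. The ``main obstacle,'' such as it is, is simply to keep track of signs when simplifying the sum of the two exponents in each of the four computations.
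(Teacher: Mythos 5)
Your proposal is correct and follows the same route as the paper: invariance via Lemma \ref{lrw} (equations (\ref{le}), (\ref{re})), and the eigenvalues by composing the standard-basis formulas (\ref{l1w})--(\ref{r2w}), with the exponents telescoping to $\alpha+j$ and $k+\beta-i$ exactly as you computed. Your boundary check (the vanishing bracket factor matching $w_{r,s}=0$ outside the range) is a sound detail that the paper leaves implicit.
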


    \begin{proof}
        The first assertion follows from (\ref{le}), (\ref{re}). For the second assertion, combine (\ref{l1w})--(\ref{r2w}).
    \end{proof}

    \section{Matrices $F^{0}, F^{+}, F^{-}$ and their action on irreducible $\mathcal{H}$-modules}

    In this section we define some matrices $F^{0},F^{+},F^{-}$ in $\mathcal{H}$, and describe their action on the irreducible $\mathcal{H}$-modules.

    \begin{definition}
    \label{ftildedef}
        Define matrices $F^{0},F^{+},F^{-}\in \text{Mat}_{P}(\mathbb{C})$ as follows. For $u,v\in P$ their $(u,v)$-entries are
        \begin{align*}
            \bigl(F^{0}\bigr)_{u,v}&=\begin{cases}
                1&\text{if $u+v$ $\slash$-covers $u$ and $u+v$ $\slash$-covers $v$ and $u$ $\backslash$-covers $u\cap v$ and $v$ $\backslash$-covers $u\cap v$},\\
                0&\text{otherwise},
            \end{cases}\\
            \bigl(F^{+}\bigr)_{u,v}&=\begin{cases}
                1&\text{if $u+v$ $\backslash$-covers $u$ and $u+v$ $\backslash$-covers $v$},\\
                0&\text{otherwise},
            \end{cases}\\
            \bigl(F^{-}\bigr)_{u,v}&=\begin{cases}
                1&\text{if $u$ $\slash$-covers $u\cap v$ and $v$ $\slash$-covers $u\cap v$},\\
                0&\text{otherwise}.
            \end{cases}
        \end{align*}
    \end{definition}

    Our next goal is to show that $F^{0},F^{+},F^{-}$ are contained in $\mathcal{H}$. To do this, we write each of $F^{0},F^{+},F^{-}$ in terms of the generators of $\mathcal{H}$.
    
    \begin{lemma}
    \label{f0}
        The matrix $F^{0}$ satisfies
        \begin{align}
            F^{0}&=L_1R_1-R_1L_1+(q-1)^{-1}\Bigl(q^{\frac{h+k}{2}}K_1^{-1}K_2-q^{\frac{k}{2}}K_1-q^{\frac{h}{2}}K_2+I\Bigr)\label{f01}\\
            &=R_2L_2-L_2R_2+(q-1)^{-1}\Bigl(q^{\frac{h+k}{2}}K_1K_2^{-1}-q^{\frac{k}{2}}K_1-q^{\frac{h}{2}}K_2+I\Bigr)\label{f02}.
        \end{align}
    \end{lemma}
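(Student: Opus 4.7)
The plan is to verify (\ref{f01}) and (\ref{f02}) entry-by-entry by comparing the $(u,v)$-entry of both sides for every $u,v \in P$. I will describe the argument for (\ref{f01}) in detail; (\ref{f02}) is proved analogously, with $\slash$ replaced by $\backslash$ throughout and Lemma \ref{cover}(ii), (iv) used in place of (iii), (i).

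For the off-diagonal case $u \neq v$, the correction term in (\ref{f01}) vanishes, so the task reduces to showing $(F^0)_{u,v} = (L_1 R_1 - R_1 L_1)_{u,v}$. Using Lemma \ref{l1l2r1r2v}, $(L_1 R_1)_{u,v}$ equals the number of $w \in P$ that $\slash$-cover both $u$ and $v$, and any such $w$ must equal $u + v$; similarly $(R_1 L_1)_{u,v}$ equals the number of $u' \in P$ that are $\slash$-covered by both $u$ and $v$, and any such $u'$ must equal $u \cap v$. These counts vanish unless $u, v \in P_{i,j}$ for some $(i,j)$ and $\dim(u \cap v) = i+j-1$, which I henceforth assume. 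I then split into Case A ($u \cap y = v \cap y$, forcing $\dim(u \cap v \cap y) = i$) and Case B ($u \cap y \neq v \cap y$, forcing $\dim(u \cap v \cap y) = i-1$). In Case A, $u \cap v \in P_{i, j-1}$ so $(R_1 L_1)_{u,v} = 0$, while $(L_1 R_1)_{u,v} = 1$ iff $u + v \in P_{i+1, j}$, which precisely matches $(F^0)_{u,v}$. In Case B, a dimension count based on $(u \cap v) + y \subseteq (u+y) \cap (v+y)$ (both sides of dimension $j + k$) gives $\dim(u+v+y) = j+k$ and hence $\dim((u+v) \cap y) = i + 1$, so $u + v \in P_{i+1, j}$ automatically; thus $(L_1 R_1)_{u,v} = (R_1 L_1)_{u,v} = 1$ and these cancel, matching $(F^0)_{u,v} = 0$ (the $\backslash$-cover condition in the definition of $F^0$ fails in Case B).

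For the diagonal case $u = v$, $u \in P_{i,j}$, we have $(F^0)_{u,u} = 0$, while Lemma \ref{cover}(iii), (i) give $(L_1 R_1)_{u,u} = [k-i]$ and $(R_1 L_1)_{u,u} = q^j [i]$, and Lemma \ref{kv} shows the $(u,u)$-entry of the correction term equals $(q-1)^{-1}\bigl(q^{i+j} - q^{k-i} - q^j + 1\bigr)$. The identity $q^j[i] - [k-i] = (q-1)^{-1}\bigl(q^{i+j} - q^{k-i} - q^j + 1\bigr)$ follows directly from $[m] = (q^m - 1)/(q-1)$, completing the diagonal verification.

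The main obstacle is the dimension-counting step in Case B, which forces $\dim((u+v) \cap y) = i + 1$ and thereby produces the cancellation between $L_1 R_1$ and $R_1 L_1$ at off-diagonal entries where $F^0$ vanishes; without this observation the entry-wise comparison would appear to fail. Everything else is a careful but mechanical case analysis based on Lemma \ref{coverlem} and Lemma \ref{cover}.
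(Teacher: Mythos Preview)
Your proof is correct. For (\ref{f01}), your approach and the paper's are essentially identical: both compute the entries of $L_1R_1$, $R_1L_1$, and the diagonal correction term, then match them against $F^0$. The paper's version is terser and leaves the off-diagonal cancellation implicit; your Case~B dimension count makes explicit the key fact (that whenever both $u,v$ $\slash$-cover $u\cap v$, one automatically has $u+v\in P_{i+1,j}$), which the paper absorbs into the phrase ``by these comments.''

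Where the two genuinely differ is in (\ref{f02}). You propose rerunning the entry-wise argument with $\slash$ and $\backslash$ interchanged; this works, and the dual of your Case~B step is actually the easier observation that if $u+v$ $\backslash$-covers both $u$ and $v$ then $u\cap y=v\cap y=(u+v)\cap y$, forcing $u\cap v\in P_{i,j-1}$. The paper instead deduces (\ref{f02}) from (\ref{f01}) purely algebraically via Lemma~\ref{appendix4}, which rearranges to
\[
L_1R_1-R_1L_1 \;=\; R_2L_2-L_2R_2+q^{\frac{h+k}{2}}(q-1)^{-1}\bigl(K_1K_2^{-1}-K_1^{-1}K_2\bigr).
\]
Your route is more self-contained and avoids invoking the Watanabe relation; the paper's is a one-line deduction once that relation is on hand.
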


    \begin{proof}
        We first prove (\ref{f01}). For $u,v\in P$ we calculate the $(u,v)$-entry of the right-hand side of (\ref{f01}). By Lemma \ref{cover}(iii), the $(u,v)$-entry of $L_1R_1$ is
        \begin{equation}
        \label{L1R1entry}
            \bigl(L_1R_1\bigr)_{u,v}=\begin{cases}
                [k-i]&\text{if }u=v \text{ and }u\in P_{i,j}\;(0\leq i\leq k,\; 0\leq j\leq h),\\
                1&\text{if $u+v$ $\slash$-covers $u$ and $u+v$ $\slash$-covers $v$},\\
                0&\text{otherwise}.
            \end{cases}
        \end{equation}
        By Lemma \ref{cover}(i), the $(u,v)$-entry of $R_1L_1$ is
        \begin{equation}
            \label{R1L1entry}
            \bigl(R_1L_1\bigr)_{u,v}=\begin{cases}
                q^j[i]&\text{if }u=v \text{ and }u\in P_{i,j}\;(0\leq i\leq k,\; 0\leq j\leq h),\\
                1&\text{if $u$ $\slash$-covers $u\cap v$ and $v$ $\slash$-covers $u\cap v$},\\
                0&\text{otherwise}.
            \end{cases}
        \end{equation}
        By Definition \ref{kdef} the matrix
        \begin{equation}
        \label{weird1}
            (q-1)^{-1}\Bigl(q^{\frac{h+k}{2}}K_2K_1^{-1}-q^{\frac{k}{2}}K_1-q^{\frac{h}{2}}K_2+I\Bigr)
        \end{equation}
        is diagonal. The $(u,u)$-entry of (\ref{weird1}) is equal to $q^j[i]-[k-i]$, where $u\in P_{i,j}$. By these comments, the $(u,v)$-entry of the right-hand side of (\ref{f01}) is equal to the $(u,v)$-entry of the left-hand side. The result follows.

        We have proved (\ref{f01}). Equation (\ref{f02}) follows from (\ref{f01}) and Lemma \ref{appendix4} in the appendix.
    \end{proof}

    \begin{lemma}
    \label{f+}
        The matrix $F^{+}$ satisfies
        \begin{equation}
        \label{f+def}
            F^{+}=L_2R_2-q^{\frac{k}{2}}(q-1)^{-1}K_1\Bigl(q^{\frac{h}{2}}K_2^{-1}-I\Bigr).
        \end{equation}
    \end{lemma}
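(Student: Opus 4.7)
The plan is to follow the same template used to prove Lemma \ref{f0}: fix arbitrary $u,v\in P$, compute the $(u,v)$-entry of the right-hand side of (\ref{f+def}), and verify that it matches the $(u,v)$-entry of $F^{+}$ as given in Definition \ref{ftildedef}. Since the correction term $q^{k/2}(q-1)^{-1}K_1(q^{h/2}K_2^{-1}-I)$ is diagonal, it suffices to understand $L_2R_2$ entrywise and then adjust on the diagonal.

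First I would compute $(L_2R_2)_{u,v}$ using Lemma \ref{l1l2r1r2v}: this entry counts the number of $w\in P$ that $\backslash$-cover both $u$ and $v$. When $u=v\in P_{i,j}$, Lemma \ref{cover}(iv) gives this count as $q^{k-i}[h-j]$. When $u\neq v$, any common $\backslash$-cover $w$ satisfies $w\supseteq u+v$ and $\dim w=\dim u+1=\dim v+1$; since $u\ne v$ forces $\dim(u+v)\geq \dim u+1$, we conclude $w=u+v$, and such a $w$ exists iff $u+v$ $\backslash$-covers each of $u$ and $v$. Thus
\begin{equation*}
\bigl(L_2R_2\bigr)_{u,v}=\begin{cases} q^{k-i}[h-j] & \text{if }u=v\in P_{i,j},\\ 1 & \text{if }u\neq v\text{ and }u+v\text{ $\backslash$-covers each of }u,v,\\ 0 & \text{otherwise}.\end{cases}
\end{equation*}

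Next I would use Definition \ref{kdef} to observe that $q^{k/2}(q-1)^{-1}K_1(q^{h/2}K_2^{-1}-I)$ is diagonal with $(u,u)$-entry, for $u\in P_{i,j}$, equal to $q^{k/2}(q-1)^{-1}\cdot q^{k/2-i}\bigl(q^{h-j}-1\bigr)=q^{k-i}[h-j]$. This exactly matches the diagonal contribution of $L_2R_2$.

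Combining these, the right-hand side of (\ref{f+def}) has zero diagonal entries, while off the diagonal it equals $1$ precisely when $u+v$ $\backslash$-covers each of $u$ and $v$. On the other hand, $(F^+)_{u,u}=0$ trivially (since $u+u=u$ cannot $\backslash$-cover itself), and off-diagonally $F^+$ has exactly the same $\{0,1\}$-pattern by Definition \ref{ftildedef}. The equality (\ref{f+def}) follows. The only mildly delicate step is the uniqueness argument pinning down $w=u+v$ for $u\neq v$, which is a routine dimension count, so no serious obstacle is expected.
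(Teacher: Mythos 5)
Your proposal is correct and follows essentially the same route as the paper: compare the $(u,v)$-entries of both sides, using Lemma \ref{cover}(iv) to get the diagonal entries $q^{k-i}[h-j]$ of $L_2R_2$ and Definition \ref{kdef} to see that the diagonal correction term has exactly these entries, so the two sides agree entrywise. Your explicit uniqueness argument pinning down the common $\backslash$-cover as $u+v$ for $u\neq v$ is a detail the paper leaves implicit, but it is the same argument.
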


    \begin{proof}
        For $u,v\in P$ we calculate the $(u,v)$-entry of the right-hand side of (\ref{f+def}). By Lemma \ref{cover}(iv), the $(u,v)$-entry of $L_2R_2$ is
        \begin{equation}
            \label{L2R2entry}
            \bigl(L_2R_2\bigr)_{u,v}=\begin{cases}
                q^{k-i}[h-j]&\text{if }u=v \text{ and }u\in P_{i,j}\;(0\leq i\leq k,\; 0\leq j\leq h),\\
                1&\text{if $u+v$ $\backslash$-covers $u$ and $u+v$ $\backslash$-covers $v$},\\
                0&\text{otherwise}.
            \end{cases}
        \end{equation}
        By Definition \ref{kdef} the matrix
        \begin{equation}
        \label{weird2}
            q^{\frac{k}{2}}(q-1)^{-1}K_1\Bigl(q^{\frac{h}{2}}K_2^{-1}-I\Bigr)
        \end{equation}
        is diagonal. The $(u,u)$-entry of (\ref{weird2}) is equal to $q^{k-i}[h-j]$, where $u\in P_{i,j}$. By these comments, the $(u,v)$-entry on the right-hand side of (\ref{f+def}) is equal to the $(u,v)$-entry on the left-hand side. The result follows.
    \end{proof}

    \begin{lemma}
    \label{f-}
        The matrix $F^{-}$ satisfies
        \begin{equation}
        \label{f-def}
            F^{-}=R_1L_1-q^{\frac{h}{2}}(q-1)^{-1}\Bigl(q^{\frac{k}{2}}K_1^{-1}-I\Bigr)K_2.
        \end{equation}
    \end{lemma}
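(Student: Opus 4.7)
The strategy mirrors the proof of Lemma \ref{f+}: I would verify the identity entry-by-entry. For $u,v\in P$, I compute the $(u,v)$-entry of the right-hand side of (\ref{f-def}) and show it matches the $(u,v)$-entry of $F^{-}$ as given in Definition \ref{ftildedef}.

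The key observation is that the matrix
\begin{equation*}
    q^{\frac{h}{2}}(q-1)^{-1}\Bigl(q^{\frac{k}{2}}K_1^{-1}-I\Bigr)K_2
\end{equation*}
is diagonal, since both $K_1^{-1}$ and $K_2$ are diagonal. Using Definition \ref{kdef}, I would compute its $(u,u)$-entry for $u\in P_{i,j}$ as
\begin{equation*}
    q^{\frac{h}{2}}(q-1)^{-1}\bigl(q^{\frac{k}{2}}\cdot q^{i-\frac{k}{2}}-1\bigr)\cdot q^{j-\frac{h}{2}} = (q-1)^{-1}(q^i-1)q^j = q^j[i].
\end{equation*}
This is exactly the diagonal entry of $R_1L_1$ recorded in (\ref{R1L1entry}). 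So subtracting the correction matrix from $R_1L_1$ zeroes out its diagonal.

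For the off-diagonal entries, (\ref{R1L1entry}) states that $(R_1L_1)_{u,v}=1$ precisely when $u$ $\slash$-covers $u\cap v$ and $v$ $\slash$-covers $u\cap v$, and is $0$ otherwise. This condition matches Definition \ref{ftildedef} for $F^{-}$ verbatim. Hence the off-diagonal entries of $R_1L_1$ already coincide with those of $F^{-}$, while the diagonal correction exactly cancels the diagonal of $R_1L_1$. Combining these two matchings yields (\ref{f-def}).

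The only step requiring care is the bookkeeping of the $q$-powers in the evaluation of the diagonal term; everything else is a direct application of the previously established formula (\ref{R1L1entry}) and Definition \ref{ftildedef}. I do not anticipate any genuine obstacle, since this lemma is fully parallel in structure to Lemma \ref{f+}, with the roles of the $\slash$- and $\backslash$-covering relations (and correspondingly the factors $K_1^{-1}$ versus $K_2^{-1}$) interchanged.
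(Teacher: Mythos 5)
Your proposal is correct and follows essentially the same route as the paper: compute the diagonal entry of the correction matrix (which is $q^j[i]$ for $u\in P_{i,j}$), observe it cancels the diagonal of $R_1L_1$ given by (\ref{R1L1entry}), and note the off-diagonal entries of $R_1L_1$ already agree with $F^{-}$ by Definition \ref{ftildedef}.
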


    \begin{proof}
        For $u,v\in P$ we calculate the $(u,v)$-entry of the right-hand side of (\ref{f-def}). By Definition \ref{kdef} the matrix
        \begin{equation}
        \label{weird3}
            q^{\frac{h}{2}}(q-1)^{-1}\Bigl(q^{\frac{k}{2}}K_1^{-1}-I\Bigr)K_2
        \end{equation}
        is diagonal. The $(u,u)$-entry of (\ref{weird3}) is $q^j[i]$, where $u\in P_{i,j}$. By these comments and (\ref{R1L1entry}), the $(u,v)$-entry on the right-hand side of (\ref{f-def}) is equal to the $(u,v)$-entry on the left-hand side. The result follows.
    \end{proof}

    \begin{lemma}
    \label{finh}
        The matrices $F^{0}, F^{+}, F^{-}$ are contained in $\mathcal{H}$.
    \end{lemma}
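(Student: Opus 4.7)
The plan is to observe that Lemma \ref{finh} follows immediately from the explicit formulas established in Lemmas \ref{f0}, \ref{f+}, \ref{f-}. Indeed, each of those lemmas writes one of $F^{0}, F^{+}, F^{-}$ as a $\mathbb{C}$-linear combination of products of the matrices $L_1, L_2, R_1, R_2, K_1^{\pm 1}, K_2^{\pm 1}$, together with the identity $I$.

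More precisely, I would first recall from Definition \ref{hdef} that $\mathcal{H}$ is the subalgebra of $\text{Mat}_{P}(\mathbb{C})$ generated by $L_1, L_2, R_1, R_2, K_1^{\pm 1}, K_2^{\pm 1}$, and in particular $\mathcal{H}$ contains $I$ and is closed under sums, scalar multiples, and products. Then I would invoke Lemma \ref{f0} to write
\[
F^{0}=L_1R_1-R_1L_1+(q-1)^{-1}\Bigl(q^{\frac{h+k}{2}}K_1^{-1}K_2-q^{\frac{k}{2}}K_1-q^{\frac{h}{2}}K_2+I\Bigr),
\]
which manifestly lies in $\mathcal{H}$. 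The same argument using Lemma \ref{f+} gives $F^{+}\in \mathcal{H}$, and Lemma \ref{f-} gives $F^{-}\in \mathcal{H}$.

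There is no real obstacle here; all of the work has already been done in Lemmas \ref{f0}, \ref{f+}, \ref{f-}, where the combinatorial entry-by-entry computations (relying on Lemma \ref{cover} to identify the diagonal contributions of $L_iR_i$ and $R_iL_i$) were carried out. Lemma \ref{finh} is simply the packaging of those three results into a single statement about membership in $\mathcal{H}$.
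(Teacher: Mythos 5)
Your proof is correct and matches the paper, which simply observes that the lemma is immediate from Lemmas \ref{f0}--\ref{f-}. Your additional remarks about $\mathcal{H}$ containing $I$ and being closed under the relevant operations just make explicit what the paper leaves implicit.
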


    \begin{proof}
        Immediate from Lemmas \ref{f0}--\ref{f-}.
    \end{proof}
    
    Next we write $L_1R_1, R_1L_1, L_2R_2, R_2L_2$ in terms of $F^{0}, F^{+}, F^{-}, K_1^{\pm 1}, K_2^{\pm 1}$.

    \begin{lemma}
        The following (\ref{l1r1back})--(\ref{r2l2back}) hold:
        \begin{align}
            L_1R_1&=F^{0}+F^{-}+(q-1)^{-1}\Bigl(q^{\frac{k}{2}}K_1-I\Bigr); \label{l1r1back}\\
            R_1L_1&=F^{-}+q^{\frac{h}{2}}(q-1)^{-1}\Bigl(q^{\frac{k}{2}}K_1^{-1}-I\Bigr)K_2;\label{r1l1back}\\
            L_2R_2&=F^{+}+q^{\frac{k}{2}}(q-1)^{-1}K_1\Bigl(q^{\frac{h}{2}}K_2^{-1}-I\Bigr); \label{l2r2back}\\
            R_2L_2&=F^{0}+F^{+}+(q-1)^{-1}\Bigl(q^{\frac{h}{2}}K_2-I\Bigr).\label{r2l2back}
        \end{align}
    \end{lemma}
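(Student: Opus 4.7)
The plan is to derive all four identities by straightforward algebraic rearrangement of the three lemmas just established (Lemmas \ref{f0}, \ref{f+}, \ref{f-}). No new combinatorial analysis on $P$ is required; everything reduces to bookkeeping with $K_1^{\pm 1}, K_2^{\pm 1}$, and the identities (\ref{f01}), (\ref{f02}), (\ref{f+def}), (\ref{f-def}).

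First I would obtain (\ref{r1l1back}) and (\ref{l2r2back}), which are the easiest. These are simply (\ref{f-def}) from Lemma \ref{f-} and (\ref{f+def}) from Lemma \ref{f+}, respectively, solved for $R_1L_1$ and $L_2R_2$. No substitution is needed; one just moves the diagonal correction term to the other side.

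Next, for (\ref{l1r1back}), I would start from (\ref{f01}), which expresses $F^{0}$ as $L_1R_1 - R_1L_1$ plus a diagonal term, and solve it for $L_1R_1$. Then I substitute the expression for $R_1L_1$ just obtained in (\ref{r1l1back}). The key observation is that $q^{h/2}(q-1)^{-1}\bigl(q^{k/2}K_1^{-1}-I\bigr)K_2$ equals $(q-1)^{-1}\bigl(q^{(h+k)/2}K_1^{-1}K_2 - q^{h/2}K_2\bigr)$, so the $q^{(h+k)/2}K_1^{-1}K_2$ and $q^{h/2}K_2$ contributions cancel against the corresponding terms coming from (\ref{f01}), leaving exactly $F^{0} + F^{-} + (q-1)^{-1}\bigl(q^{k/2}K_1 - I\bigr)$, as required. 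For (\ref{r2l2back}), I would mirror this argument using (\ref{f02}) in place of (\ref{f01}) and (\ref{l2r2back}) in place of (\ref{r1l1back}); the terms $q^{(h+k)/2}K_1K_2^{-1}$ and $q^{k/2}K_1$ now play the role of the canceling pair, leaving $F^{0} + F^{+} + (q-1)^{-1}\bigl(q^{h/2}K_2 - I\bigr)$.

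The only possible obstacle is tracking signs and powers of $q$ correctly through the telescoping; there is no conceptual subtlety and no case analysis on $P$. Since the $K_i$'s are diagonal and commute, the substitutions and cancellations can be verified entrywise if desired, but the identity-level computation suffices.
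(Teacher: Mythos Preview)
Your proposal is correct and matches the paper's approach: the paper's proof is simply ``Routine from Lemmas \ref{f0}--\ref{f-},'' and your write-up spells out precisely that routine rearrangement. The order and cancellations you describe are exactly the computations the paper omits.
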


    \begin{proof}
        Routine from Lemmas \ref{f0}--\ref{f-}.    
    \end{proof}

    Next we define a matrix $F$.

    \begin{definition}
    \label{deff}
        Define
        \begin{equation}
        \label{fdef}
            F=F^{0}+F^{+}+F^{-}.
        \end{equation}
    \end{definition}

    \begin{lemma}
        \label{ftildeeq}
        We have
        \begin{align}
            F&=L_1R_1+L_2R_2-(q-1)^{-1}\Bigl(q^{\frac{h+k}{2}}K_1K_2^{-1}-I\Bigr),\label{fdef2}\\
            &=R_1L_1+R_2L_2-(q-1)^{-1}\Bigl(q^{\frac{h+k}{2}}K_1^{-1}K_2-I\Bigr).\label{fdef3}
        \end{align}
    \end{lemma}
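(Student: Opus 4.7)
The plan is to substitute the formulas for $F^{0}, F^{+}, F^{-}$ established in Lemmas~\ref{f0}--\ref{f-} into the definition $F=F^{0}+F^{+}+F^{-}$ from Definition~\ref{deff} and simplify. The key observation enabling both identities is that Lemma~\ref{f0} gives two different expressions for $F^{0}$, namely (\ref{f01}) and (\ref{f02}); these two forms are precisely what produce the two different identities (\ref{fdef2}) and (\ref{fdef3}).

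To prove (\ref{fdef2}), I would use the form (\ref{f01}) for $F^{0}$ together with (\ref{f+def}) for $F^{+}$ and (\ref{f-def}) for $F^{-}$. When these three are summed, the $R_1L_1$ term arising with a minus sign in $F^{0}$ cancels the $R_1L_1$ term of $F^{-}$; the two $q^{\frac{h+k}{2}}K_1^{-1}K_2$ contributions from $F^{0}$ and $F^{-}$ cancel; and the $q^{\frac{k}{2}}K_1$ and $q^{\frac{h}{2}}K_2$ diagonal terms from $F^{0}$ cancel against the corresponding pieces hidden in the expansions of $F^{+}$ and $F^{-}$ respectively. What remains is the matrix sum $L_1R_1+L_2R_2$, together with the residual diagonal contribution $(q-1)^{-1}\bigl(I-q^{\frac{h+k}{2}}K_1K_2^{-1}\bigr)$, which is exactly (\ref{fdef2}).

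For (\ref{fdef3}), I would repeat the same bookkeeping but starting from the form (\ref{f02}) for $F^{0}$. Now the roles of the two pairs $(L_1,R_1)$ and $(L_2,R_2)$ are swapped: the $L_2R_2$ term in $F^{0}$ (with a minus sign) cancels the $L_2R_2$ in $F^{+}$; the $q^{\frac{h+k}{2}}K_1K_2^{-1}$ contributions from $F^{0}$ and $F^{+}$ cancel; and the $q^{\frac{k}{2}}K_1$, $q^{\frac{h}{2}}K_2$ terms cancel as before, leaving $R_1L_1+R_2L_2$ plus the diagonal correction involving $I$ and $q^{\frac{h+k}{2}}K_1^{-1}K_2$.

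There is no conceptual obstacle here; the lemma is a routine algebraic verification, and the main care required is in organizing the many $K$-monomials that appear. The essential content is already packaged into Lemma~\ref{f0}: the fact that the difference $L_1R_1-R_1L_1$ and the difference $R_2L_2-L_2R_2$ both equal $F^{0}$ modulo explicit diagonal corrections is what allows $F$ to be written symmetrically in terms of either $L_iR_i$ or $R_iL_i$.
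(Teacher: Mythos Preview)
Your approach is correct and amounts to the same routine verification as the paper's proof, with only a minor difference in packaging. For (\ref{fdef2}) the paper adds the pre-combined relations (\ref{l1r1back}) and (\ref{l2r2back}) rather than summing (\ref{f01}), (\ref{f+def}), (\ref{f-def}) from scratch, but since (\ref{l1r1back}) and (\ref{l2r2back}) were themselves obtained by rearranging those same three formulas, the algebra is identical. For (\ref{fdef3}) the paper deduces it from (\ref{fdef2}) by applying Lemma~\ref{appendix4} directly, whereas you instead invoke the alternate form (\ref{f02}) of $F^{0}$; since (\ref{f02}) was itself proved from (\ref{f01}) via Lemma~\ref{appendix4}, the two derivations are equivalent in content.
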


    \begin{proof}
        Equation (\ref{fdef2}) follows from (\ref{l1r1back}), (\ref{l2r2back}), (\ref{fdef}). Equation (\ref{fdef3}) follows from (\ref{fdef2}) and Lemma \ref{appendix4} in the appendix.
    \end{proof}
    
    The following result gives a combinatorial interpretation of $F$.
    
    \begin{lemma}
        The matrix $F$ has the following entries. For $u,v\in P$, the $(u,v)$-entry is
        \begin{equation*}
            F_{u,v} = \begin{cases}
                1&\text{if each of $u,v$ covers $u\cap v$ and $\dim (u\cap y)=\dim (v\cap y)$,}\\
                0&\text{otherwise.}
            \end{cases}
        \end{equation*}
    \end{lemma}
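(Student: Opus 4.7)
The plan is to use Lemma~\ref{ftildeeq}, which expresses $F = L_1R_1+L_2R_2 - (q-1)^{-1}\bigl(q^{\frac{h+k}{2}}K_1K_2^{-1}-I\bigr)$, and to read off each $(u,v)$-entry using the entry formulas (\ref{L1R1entry}), (\ref{L2R2entry}) together with Definition~\ref{kdef}.

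For $u = v \in P_{i,j}$ the three contributions are $[k-i]$, $q^{k-i}[h-j]$, and $[h+k-i-j]$; a one-line check using $[m]=(q^m-1)/(q-1)$ gives $F_{u,u}=0$, matching the claim since no subspace covers itself.

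For $u \neq v$ the diagonal correction vanishes, so $F_{u,v} = (L_1R_1)_{u,v} + (L_2R_2)_{u,v}$. By (\ref{L1R1entry}) the first summand is $1$ iff $u+v$ $\slash$-covers each of $u,v$; by (\ref{L2R2entry}) the second is $1$ iff $u+v$ $\backslash$-covers each of $u,v$. These two alternatives are mutually exclusive because $u+v$ has a unique type in $P_{r,s}$, so $F_{u,v}\in\{0,1\}$, with $F_{u,v}=1$ precisely when $u+v$ covers each of $u,v$ by the same kind of covering.

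It remains to identify this event with the stated condition. If $F_{u,v}=1$, then $u,v$ are hyperplanes of $u+v$ of the same type in the $(i,j)$-notation, forcing $u,v \in P_{i,j}$ for a common $(i,j)$; then $\dim(u\cap v) = \dim u - 1$, so each of $u,v$ covers $u\cap v$, and $\dim(u\cap y) = i = \dim(v\cap y)$. Conversely, if $u,v \in P_{i,j}$ and each covers $u\cap v$, then $u+v$ has dimension $i+j+1$ and covers $u$, so $u+v \in P_{i+1,j}$ or $P_{i,j+1}$; in either case it covers each of $u,v$ by the same kind of covering, giving $F_{u,v}=1$. The main (mild) subtlety is simply the observation that $L_1R_1$ and $L_2R_2$ encode precisely the \emph{same-type} covering alternatives, which is what forces the extra condition $\dim(u\cap y)=\dim(v\cap y)$ in the combinatorial description beyond the bare covering of $u\cap v$.
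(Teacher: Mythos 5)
Your proposal is correct and follows essentially the same route as the paper, whose proof is simply the remark that the claim is a routine consequence of Lemma \ref{ftildeeq}; you have carried out that routine verification in detail (diagonal cancellation via $[k-i]+q^{k-i}[h-j]=[h+k-i-j]$, and the off-diagonal identification of $(L_1R_1)_{u,v}+(L_2R_2)_{u,v}$ with the same-type covering condition), and the details check out.
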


    \begin{proof}
        This is a routine consequence of Lemma \ref{ftildeeq}.
    \end{proof}
    
    Next we describe the action of $F^{0},F^{+},F^{-}, F$ on the irreducible $\mathcal{H}$-modules. Recall the standard module $V$. 

     \begin{lemma}
     \label{fe*}
        For $0\leq i\leq k$ and $0\leq j\leq h$,
        \begin{align}
            &F^{0}E^{*}_{i,j}V\subseteq E^{*}_{i,j}V, &F^{+}E^{*}_{i,j}V\subseteq E^{*}_{i,j}V, \label{fe*1}\\
            &F^{-}E^{*}_{i,j}V\subseteq E^{*}_{i,j}V, &FE^{*}_{i,j}V\subseteq E_{i,j}^{*}V.\label{fe*2}
        \end{align}
    \end{lemma}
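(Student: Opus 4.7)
The plan is to reduce each of the four containments to the already-established fact that the products $L_1R_1$, $R_1L_1$, $L_2R_2$, $R_2L_2$, together with the diagonal matrices $K_1^{\pm 1}, K_2^{\pm 1}$, all preserve each subspace $E^{*}_{i,j}V$. Once this is in hand, each of $F^0, F^+, F^-, F$ can be expanded as a linear combination of these preserving operators via Lemmas \ref{f0}, \ref{f+}, \ref{f-}, \ref{ftildeeq}, so $E^{*}_{i,j}V$-invariance follows.

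The two key observations are as follows. First, by Lemma \ref{lrv} we have
\begin{equation*}
L_1 R_1 E^{*}_{i,j}V \subseteq L_1 E^{*}_{i+1,j}V \subseteq E^{*}_{i,j}V,
\end{equation*}
and similarly $R_1 L_1 E^{*}_{i,j}V \subseteq E^{*}_{i,j}V$, $L_2 R_2 E^{*}_{i,j}V \subseteq E^{*}_{i,j}V$, and $R_2 L_2 E^{*}_{i,j}V \subseteq E^{*}_{i,j}V$. Second, $K_1^{\pm 1}$ and $K_2^{\pm 1}$ are diagonal, so they act as scalars on each $E^{*}_{i,j}V$ by Lemma \ref{kv}, and in particular preserve it. Thus every summand on the right-hand side of (\ref{f01}), (\ref{f+def}), (\ref{f-def}), and (\ref{fdef2}) stabilizes $E^{*}_{i,j}V$.

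Putting these together: by Lemma \ref{f0}, $F^0$ is a sum of $L_1R_1$, $-R_1L_1$, and a polynomial in $K_1^{\pm 1}, K_2^{\pm 1}$, so $F^0 E^{*}_{i,j}V \subseteq E^{*}_{i,j}V$. By Lemma \ref{f+}, $F^+$ is a sum of $L_2R_2$ and a polynomial in $K_1, K_2^{-1}$, so $F^+ E^{*}_{i,j}V \subseteq E^{*}_{i,j}V$. By Lemma \ref{f-}, $F^-$ is a sum of $R_1L_1$ and a polynomial in $K_1^{-1}, K_2$, so $F^- E^{*}_{i,j}V \subseteq E^{*}_{i,j}V$. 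Finally, $F = F^0 + F^+ + F^-$ by Definition \ref{deff}, so $F E^{*}_{i,j}V \subseteq E^{*}_{i,j}V$ as well. No serious obstacle is expected; the argument is essentially bookkeeping on top of Lemma \ref{lrv}.
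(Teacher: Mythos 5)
Your proposal is correct and follows essentially the same route as the paper: expand $F^{0},F^{+},F^{-}$ via Lemmas \ref{f0}--\ref{f-}, note that $L_1R_1$, $R_1L_1$, $L_2R_2$ and the diagonal matrices $K_1^{\pm1},K_2^{\pm1}$ each preserve $E^{*}_{i,j}V$ by Lemmas \ref{lrv} and \ref{kv}, and then get the $F$ case from $F=F^{0}+F^{+}+F^{-}$. The only difference is that you spell out the bookkeeping that the paper calls ``immediate.''
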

    \begin{proof}
        For $F^{0}$ the result is immediate from (\ref{f01}), Definition \ref{kdef}, Lemma \ref{lrv}. For $F^{+}$ and $F^{-}$ the proofs are similar, and omitted. For $F$, the result follows from (\ref{fdef}), (\ref{fe*1}), and the left equation of (\ref{fe*2}).
    \end{proof}

    For the rest of this section, let $W$ denote an irreducible $\mathcal{H}$-module of type $(\alpha,\beta,\rho)$.
    
    \begin{lemma}
    \label{factionlem}
        For $\alpha\leq i\leq k-\rho-\alpha$ and $\rho+\beta\leq j\leq h-\beta$, the subspace $E^{*}_{i,j}W$ is invariant under each of $F^{0},F^{+},F^{-}, F$. 
    \end{lemma}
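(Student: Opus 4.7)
The plan is to reduce this to the two facts already established for these matrices: that they lie in $\mathcal{H}$, and that they preserve each $E^*_{i,j}V$ on the ambient standard module.

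First, by Lemma \ref{finh} together with Definition \ref{deff}, each of $F^0, F^+, F^-, F$ lies in $\mathcal{H}$. Since $W$ is an $\mathcal{H}$-submodule of $V$, each of these four matrices maps $W$ into $W$. Next, by Lemma \ref{fe*}, each of them maps $E^*_{i,j}V$ into $E^*_{i,j}V$.

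The key bridge between these two observations is the identity $E^*_{i,j}W = W \cap E^*_{i,j}V$. I would prove this by noting that $E^*_{i,j}\in \mathcal{K}\subseteq \mathcal{H}$ (using the remark after Definition \ref{kdef} that $K_1^{\pm 1}, K_2^{\pm 1}$ generate $\mathcal{K}$), so $E^*_{i,j}W\subseteq W$; it is also contained in $E^*_{i,j}V$ trivially. Conversely, any $w\in W\cap E^*_{i,j}V$ satisfies $w=E^*_{i,j}w\in E^*_{i,j}W$.

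Putting these pieces together: for $X\in \{F^0,F^+,F^-,F\}$ and $w\in E^*_{i,j}W$, we have $w\in W$ so $Xw\in W$, and $w\in E^*_{i,j}V$ so $Xw\in E^*_{i,j}V$ by Lemma \ref{fe*}; hence $Xw\in W\cap E^*_{i,j}V=E^*_{i,j}W$, proving the invariance. There is no real obstacle here — the argument is a direct assembly of Lemmas \ref{finh} and \ref{fe*} together with the observation that $E^*_{i,j}$ itself belongs to $\mathcal{H}$.
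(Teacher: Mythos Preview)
Your proof is correct and follows the same approach as the paper, which simply says ``Immediate from Lemma~\ref{fe*}.'' You have merely unpacked the word ``immediate'' by making explicit why $E^{*}_{i,j}W = W\cap E^{*}_{i,j}V$ and why the $\mathcal{H}$-invariance of $W$ matters, which is entirely reasonable.
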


    \begin{proof}
        Immediate from Lemma \ref{fe*}.
    \end{proof}

    \begin{definition}
        For $\alpha\leq i\leq k-\rho-\alpha$ and $\rho+\beta\leq j\leq h-\beta$ let
        \begin{equation*}
            a_{i,j}^{0}(W),\qquad \qquad a_{i,j}^{+}(W),\qquad \qquad a_{i,j}^{-}(W), \qquad \qquad a_{i,j}(W)
        \end{equation*}
        denote the eigenvalues of $F^{0}, F^{+}, F^{-}, F$ respectively that correspond to $E^{*}_{i,j}W$.
    \end{definition}

    By construction we have 
    \begin{equation*}
    \label{aij}
        a_{i,j}(W)=a_{i,j}^{0}(W)+a_{i,j}^{+}(W)+a_{i,j}^{-}(W).
    \end{equation*}

    \begin{theorem}
    \label{faction}
        For $\alpha\leq i\leq k-\rho-\alpha$ and $\rho+\beta\leq j\leq h-\beta$,
        \begin{align}
            a_{i,j}^{0}(W)&=q^{k-i}[j-\rho]-[j],\label{aij0}\\
            a_{i,j}^{+}(W)&=q^{k-i}\Bigl(q^{\beta+1}[j-\rho-\beta][h-\beta-j]-[\beta]\Bigr),\label{aij+}\\
            a_{i,j}^{-}(W)&=q^{j}\Bigl(q^{\alpha+1}[i-\alpha][k-\rho-\alpha-i]-[\alpha]\Bigr).\label{aij-}
        \end{align}
    \end{theorem}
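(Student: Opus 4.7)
The plan is to compute the eigenvalues directly from the explicit expressions for $F^0, F^+, F^-$ given in Lemmas \ref{f0}, \ref{f+}, \ref{f-}. By Lemma \ref{factionlem} we already know that $E^*_{i,j}W$ is invariant under each of $F^0, F^+, F^-$, so it suffices to evaluate each one as a scalar multiple of the identity on this subspace. All the building blocks are diagonalized on $E^*_{i,j}W$: the eigenvalues of $L_1R_1, R_1L_1, L_2R_2, R_2L_2$ are given by Lemma \ref{double}, and the eigenvalues of $K_1^{\pm 1}, K_2^{\pm 1}$ are given by Lemma \ref{klem}.

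For $F^0$, I would take the expression (\ref{f01}). The $L_1R_1 - R_1L_1$ part contributes
\[
q^{\alpha+j}\bigl([i-\alpha+1][k-\rho-\alpha-i] - [i-\alpha][k-\rho-\alpha-i+1]\bigr),
\]
which via the identity $[m+1][n]-[m][n+1] = (q^n-q^m)/(q-1)$ simplifies to $(q^{k-\rho-i+j}-q^{i+j})/(q-1)$. Substituting the $K_i$-eigenvalues into the diagonal correction term of (\ref{f01}) gives $(q^{i+j} - q^{k-i} - q^j + 1)/(q-1)$. Adding these two and grouping yields $q^{k-i}[j-\rho] - [j]$, which is (\ref{aij0}).

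For $F^+$ and $F^-$, the calculations are parallel. Using (\ref{f+def}) and the eigenvalue of $L_2R_2$ from Lemma \ref{double} together with the $K_1, K_2^{-1}$ eigenvalues, one obtains
\[
a^+_{i,j}(W) = q^{k+\beta-i}[j-\rho-\beta+1][h-\beta-j] - q^{k-i}[h-j].
\]
The desired form $q^{k-i}(q^{\beta+1}[j-\rho-\beta][h-\beta-j] - [\beta])$ then follows from the telescoping identity $q^\beta[n]([m+1]-q[m]) = q^\beta[n]$ with $m = j-\rho-\beta$, $n = h-\beta-j$, combined with the $q$-integer addition formula $[h-j] = [\beta] + q^\beta[h-\beta-j]$. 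The computation for $F^-$ is entirely analogous: from (\ref{f-def}) we get $a^-_{i,j}(W) = q^{\alpha+j}[i-\alpha][k-\rho-\alpha-i+1] - q^j[i]$, and the identity $[i] = [\alpha] + q^\alpha[i-\alpha]$ converts this to the target expression (\ref{aij-}).

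None of these steps presents a real obstacle; the only thing to watch is bookkeeping of the $q$-powers and a careful use of the addition rule $[a+b] = [a] + q^a[b]$ for $q$-integers. I would organize the proof as three short, essentially identical calculations: apply the formula for $F^\epsilon$ from Section 3, evaluate the two terms using Lemma \ref{double} and Lemma \ref{klem}, and invoke the above $q$-integer identities to collapse to the stated closed form.
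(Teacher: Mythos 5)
Your proposal is correct and follows essentially the same route as the paper: substitute the expressions for $F^{0},F^{+},F^{-}$ from Lemmas \ref{f0}--\ref{f-} and evaluate on $E^{*}_{i,j}W$ using the eigenvalues in Lemma \ref{double} and Lemma \ref{klem}, then simplify with $q$-integer identities. The paper only writes out the $F^{0}$ case and declares the others similar, whereas you carry the bookkeeping through for all three; the identities you invoke check out.
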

    
    \begin{proof}
        We prove (\ref{aij0}). Pick $w\in E^{*}_{i,j}W$. By (\ref{f01}),
        \begin{equation}
        \label{f0weq}
            F^{0}w=\biggl(L_1R_1-R_1L_1+(q-1)^{-1}\Bigl(q^{\frac{h+k}{2}}K_2K_1^{-1}-q^{\frac{k}{2}}K_1-q^{\frac{h}{2}}K_2+I\Bigr)\biggr)w.
        \end{equation}
        Evaluate the right-hand side of (\ref{f0weq}) using the $L_1R_1,\;R_1L_1$-entries of the table in Lemma \ref{double} and the eigenvalues in Lemma \ref{klem}. The result follows.
        
        We have proved (\ref{aij0}). The proofs for (\ref{aij+}) and (\ref{aij-}) are similar, and omitted.
    \end{proof}

    \section{The center of $\mathcal{H}$}
\label{centralH}
In this section we describe the center of $\mathcal{H}$.

The following result is a variation on \cite[Theorem~9.3]{Watanabe}.

\begin{theorem}
\label{central}
    The center of $\mathcal{H}$ is generated by the following three elements:
    \begin{align}
        \Omega_0&=q^{-\frac{h+k}{2}}\Bigl((q-1)F^{0}K_1^{-1}K_2^{-1}+q^{\frac{h}{2}}K_1^{-1}+q^{\frac{k}{2}}K_2^{-1}-K_1^{-1}K_2^{-1}\Bigr),\label{omega0}\\
        \Omega_1&=q^{-\frac{h}{2}}\Biggl(qF^{0}K_2^{-1}+(q-1)F^{-}K_2^{-1}+\frac{q^{\frac{k}{2}+1}K_1K_2^{-1}+q^{\frac{h+k}{2}+1}K_1^{-1}-qK_2^{-1}}{q-1}\Biggr)-\frac{q}{q-1}I,\label{omega1}\\ 
        \Omega_2&=q^{-\frac{k}{2}}\Biggl(qF^0K_1^{-1}+(q-1)F^{+}K_1^{-1}+\frac{q^{\frac{h}{2}+1}K_1^{-1}K_2+q^{\frac{h+k}{2}+1}K_2^{-1}-qK_1^{-1}}{q-1}\Biggr)-\frac{q}{q-1}I.\label{omega2}
    \end{align}
\end{theorem}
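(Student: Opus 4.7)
The plan is to prove the two statements implicit in the theorem: (a) each of $\Omega_0, \Omega_1, \Omega_2$ is central in $\mathcal{H}$, and (b) together they generate $Z(\mathcal{H})$. Both will be derived from the concrete description of the irreducible $\mathcal{H}$-modules provided by Lemma \ref{typelem}, Lemma \ref{klem}, and Theorem \ref{faction}, combined with Watanabe's \cite[Theorem~9.3]{Watanabe}, which asserts that $Z(\mathcal{H})$ has a three-element generating set $\Lambda_0, \Lambda_1, \Lambda_2$.

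For (a), I would compute the action of each $\Omega_i$ on an arbitrary irreducible $\mathcal{H}$-module $W$ of type $(\alpha,\beta,\rho)$. For each $(i,j)$ with $\alpha\leq i\leq k-\rho-\alpha$ and $\rho+\beta\leq j\leq h-\beta$, the subspace $E^{*}_{i,j}W$ is a common eigenspace for $K_1^{\pm 1}, K_2^{\pm 1}, F^{0}, F^{+}, F^{-}$, with eigenvalues supplied by Lemma \ref{klem} and Theorem \ref{faction}. I would substitute these eigenvalues into (\ref{omega0})--(\ref{omega2}) and verify that the resulting scalar is independent of $(i,j)$. For example, a short telescoping in the $\Omega_0$-calculation collapses the scalar to $q^{-\rho}$; I expect analogous cancellations for $\Omega_1$ and $\Omega_2$, yielding scalars $\omega_0(\rho), \omega_1(\alpha,\rho), \omega_2(\beta,\rho)$. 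Once scalar action on every irreducible is confirmed, Schur's lemma together with the faithfulness of the $\mathcal{H}$-module $V$ (which is inherited from $\mathcal{H}\subseteq \text{Mat}_{P}(\mathbb{C})$) yields $\Omega_i \in Z(\mathcal{H})$.

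For (b), I use that $\mathcal{H}$ is semisimple (being a subalgebra of $\text{Mat}_{P}(\mathbb{C})$ closed under conjugate-transpose, acting on the semisimple module $V$), so $Z(\mathcal{H})$ is a commutative semisimple $\mathbb{C}$-algebra whose dimension equals the number of isomorphism classes of irreducibles, i.e., the number of triples $(\alpha,\beta,\rho)$ satisfying (\ref{abcondition}) by Lemma \ref{unique}. To show $\Omega_0, \Omega_1, \Omega_2$ generate $Z(\mathcal{H})$, it suffices to show that the joint-spectrum map
\[
(\alpha,\beta,\rho)\;\longmapsto\;\bigl(\omega_0(\rho),\;\omega_1(\alpha,\rho),\;\omega_2(\beta,\rho)\bigr)
\]
is injective on the parameter region (\ref{abcondition}). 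Once that is established, every central primitive idempotent of $\mathcal{H}$ can be written as a Lagrange-interpolation polynomial in $\Omega_0, \Omega_1, \Omega_2$, forcing these three elements to generate the center.

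The main obstacle is the injectivity check. The shape of (\ref{aij0})--(\ref{aij-}) suggests that $\omega_0$ isolates $\rho$, after which $\omega_1$ isolates $\alpha$ and $\omega_2$ isolates $\beta$; however, confirming this requires careful $q$-integer manipulation to exclude accidental coincidences across distinct $(\alpha,\beta,\rho)$. An alternative, perhaps shorter route is to match each $\Omega_i$ directly against Watanabe's $\Lambda_i$ by substituting (\ref{l1r1back})--(\ref{r2l2back}) into (\ref{omega0})--(\ref{omega2}), exhibiting $\Omega_i$ as $\Lambda_i$ plus an explicit correction in the commutative subalgebra $\mathcal{K}$, and then arguing that the adjustment preserves the property of generating the center; this bypasses the injectivity argument but requires a detailed comparison of formulas.
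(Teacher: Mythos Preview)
Your primary approach is correct and would succeed: the scalar-action computation does collapse as you predict (one finds $\omega_0=q^{-\rho}$, $\omega_1=q[k-\rho-\alpha]+[\alpha]$, $\omega_2=q[h-\rho-\beta]+[\beta]$), and the injectivity check goes through since, for fixed $\rho$, the function $\alpha\mapsto q^{\alpha}+q^{k-\rho+1-\alpha}$ is strictly monotone on $0\leq\alpha\leq(k-\rho)/2$ (and similarly for $\beta$). However, this is not the route the paper takes. The paper's proof is your ``alternative'' route, executed in an even cleaner form than you anticipate: rather than showing $\Omega_i$ equals $\Lambda_i$ plus a correction in $\mathcal{K}$, the paper verifies the exact identities
\[
\Omega_0=\Lambda_0^{-1},\qquad \Omega_1=q^{\frac{k-1}{2}}(q-1)\Lambda_1-\tfrac{q+1}{q-1}I,\qquad \Omega_2=q^{\frac{h-1}{2}}(q-1)\Lambda_2-\tfrac{q+1}{q-1}I,
\]
so each $\Lambda_i$ is recoverable from the corresponding $\Omega_i$ alone, and the appeal to \cite[Theorem~9.3]{Watanabe} is immediate with no injectivity argument needed. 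Your approach has the virtue of being self-contained (it only uses the classification of irreducibles, not the explicit formulas for the $\Lambda_i$) and of producing the eigenvalues of $\Omega_0,\Omega_1,\Omega_2$ along the way, which the paper needs separately anyway; the paper's approach is much shorter but leans entirely on the precise form of Watanabe's generators.
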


\begin{proof}
    We refer to $\Lambda_0,\Lambda_1,\Lambda_2$ from \cite[Section~9]{Watanabe}. We routinely verify that
    \begin{equation*}
        \Omega_0=\Lambda_0^{-1}, \qquad \qquad \Omega_1=q^{\frac{k-1}{2}}(q-1)\Lambda_1-\frac{q+1}{q-1}I,\qquad \qquad \Omega_2=q^{\frac{h-1}{2}}(q-1)\Lambda_2-\frac{q+1}{q-1}I.
    \end{equation*}
    By \cite[Theorem~9.3]{Watanabe}, the elements $\Lambda_0,\Lambda_1,\Lambda_2$ generate the center of $\mathcal{H}$. The result follows.
\end{proof}

\begin{corollary}
\label{omegaev}
    For $0\leq i\leq k$ and $0\leq j\leq h$,
    \begin{equation*}
        \Omega_0E^{*}_{i,j}V\subseteq E^{*}_{i,j}V, \qquad \qquad \Omega_1E^{*}_{i,j}V\subseteq E^{*}_{i,j}V, \qquad \qquad \Omega_2E^{*}_{i,j}V\subseteq E^{*}_{i,j}V.
    \end{equation*}
\end{corollary}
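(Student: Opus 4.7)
The plan is to read off the conclusion directly from the explicit formulas \eqref{omega0}--\eqref{omega2} together with the invariance results already established for the constituent matrices. The key observation is that each of $\Omega_0,\Omega_1,\Omega_2$ is a $\mathbb{C}$-linear combination of products drawn from the finite list $\{F^0,F^+,F^-,K_1^{\pm 1},K_2^{\pm 1},I\}$, and every matrix in this list stabilizes $E^*_{i,j}V$.

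More precisely, I would proceed in three short steps. First, by Lemma \ref{kv}, the subspace $E^*_{i,j}V$ is a common eigenspace for $K_1^{\pm 1}$ and $K_2^{\pm 1}$; in particular each of these four matrices sends $E^*_{i,j}V$ into itself. Second, by Lemma \ref{fe*}, each of $F^0, F^+, F^-$ satisfies $F^{\bullet}E^*_{i,j}V\subseteq E^*_{i,j}V$. Third, the set of matrices in $\text{Mat}_{P}(\mathbb{C})$ that stabilize the subspace $E^*_{i,j}V$ is closed under $\mathbb{C}$-linear combinations and under products; consequently any polynomial in $F^0,F^+,F^-,K_1^{\pm 1},K_2^{\pm 1}$ also stabilizes $E^*_{i,j}V$. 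Inspecting \eqref{omega0}, \eqref{omega1}, \eqref{omega2}, each of $\Omega_0,\Omega_1,\Omega_2$ has exactly this form, and the three inclusions follow.

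There is no real obstacle: the only thing to check is that the formulas for $\Omega_0,\Omega_1,\Omega_2$ displayed in Theorem \ref{central} do not involve any matrix outside the stabilizing list above (in particular, that $L_1,L_2,R_1,R_2$ do not appear in an unbalanced way that would shift $E^*_{i,j}V$ to a different $E^*_{r,s}V$), and a quick inspection of \eqref{omega0}--\eqref{omega2} confirms this. Hence the corollary is an immediate consequence of Theorem \ref{central}, Lemma \ref{fe*}, and Lemma \ref{kv}.
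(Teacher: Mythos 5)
Your proposal is correct and follows the same route as the paper: the paper's proof simply states that the inclusions are immediate from the formulas (\ref{omega0})--(\ref{omega2}), with the underlying reasoning being exactly what you spelled out, namely that $K_1^{\pm 1},K_2^{\pm 1}$ (Lemma \ref{kv}) and $F^{0},F^{+},F^{-}$ (Lemma \ref{fe*}) each stabilize $E^{*}_{i,j}V$, and stabilizers are closed under sums and products. Your write-up just makes explicit the details the paper leaves to the reader.
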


\begin{proof}
    Immediate from (\ref{omega0})--(\ref{omega2}).
\end{proof}

Next we express $F^{0}, F^{+}, F^{-}$ in terms of $\Omega_0,\Omega_1, \Omega_2, K_1^{\pm 1}, K_2^{\pm 1}$.
\begin{lemma}
\label{fcentral}
    We have
    \begin{align}
        F^{0}&=(q-1)^{-1}\Bigl(q^{\frac{h+k}{2}}\Omega_0K_1K_2-q^{\frac{k}{2}}K_1-q^{\frac{h}{2}}K_2+I\Bigr),\label{f0center}\\
        F^{+}&=(q-1)^{-1}\Biggl(q^{\frac{k}{2}}\Omega_2-(q-1)^{-1}\biggl(q^{\frac{h+k}{2}+1}\Bigl(\Omega_0K_2+K_2^{-1}\Bigr)-2q^{\frac{k}{2}+1}I\biggr)\Biggr)K_1,\\
        F^{-}&=(q-1)^{-1}\Biggl(q^{\frac{h}{2}}\Omega_1-(q-1)^{-1}\biggl( q^{\frac{h+k}{2}+1}\Bigl(\Omega_0K_1+K_1^{-1}\Bigr)-2q^{\frac{h}{2}+1}I\biggr)\Biggr)K_2.\label{f-center}
    \end{align}
\end{lemma}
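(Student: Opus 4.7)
The plan is to prove Lemma \ref{fcentral} by algebraically inverting the formulas (\ref{omega0})--(\ref{omega2}) in Theorem \ref{central}, viewing them as a linear system in the unknowns $F^{0}, F^{+}, F^{-}$ with coefficients in the commutative algebra generated by $K_1^{\pm 1}, K_2^{\pm 1}, \Omega_0, \Omega_1, \Omega_2$. A key preliminary observation is that all the matrices appearing in these formulas pairwise commute: the $\Omega_i$ are central by Theorem \ref{central}; the $K_i$ are diagonal, hence commute with each other; and each of $F^{0}, F^{+}, F^{-}$ preserves every common eigenspace $E^{*}_{i,j}V$ of $K_1, K_2$ by Lemma \ref{fe*}, so it commutes with $K_1^{\pm 1}$ and $K_2^{\pm 1}$. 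Thus I may manipulate the three equations as if in a commutative ring.

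First I would extract $F^{0}$ from (\ref{omega0}): multiplying both sides by $q^{(h+k)/2} K_1 K_2 (q-1)^{-1}$ and rearranging gives (\ref{f0center}) directly, with no interaction with $F^{+}$ or $F^{-}$ (since only $F^{0}$ appears in $\Omega_0$).

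Next I would obtain $F^{-}$ from (\ref{omega1}). Solving (\ref{omega1}) for $(q-1) F^{-} K_2^{-1}$ yields
\begin{equation*}
(q-1) F^{-} K_2^{-1} = q^{h/2}\Omega_1 + \tfrac{q^{h/2+1}}{q-1} I - q F^{0} K_2^{-1} - (q-1)^{-1}\Bigl(q^{k/2+1} K_1 K_2^{-1} + q^{(h+k)/2+1} K_1^{-1} - q K_2^{-1}\Bigr).
\end{equation*}
Substituting the formula for $F^{0}$ just obtained, multiplying through by $K_2$, and collecting terms: the $\pm q^{k/2+1} K_1$ contributions cancel, the $\pm q I$ contributions cancel, and the two $q^{h/2+1} K_2$ contributions combine, producing the displayed formula for $F^{-}$. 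The derivation of $F^{+}$ from (\ref{omega2}) is exactly symmetric under the interchange of $(k, K_1, F^{-})$ with $(h, K_2, F^{+})$ and yields the formula for $F^{+}$.

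The argument is entirely algebraic, so the only obstacle is bookkeeping: one must carefully track the various powers of $q^{1/2}$ and the factors of $K_1^{\pm 1}, K_2^{\pm 1}$ through the substitutions and cancellations. No deeper structural input beyond Theorem \ref{central}, Lemma \ref{fe*}, and centrality of the $\Omega_i$ is required.
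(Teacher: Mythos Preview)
Your proposal is correct and takes essentially the same approach as the paper, whose proof is simply the one-line instruction ``Combine (\ref{omega0})--(\ref{omega2}).'' Your version is a careful unpacking of that instruction, and the extra commutativity justification via Lemma \ref{fe*} is a nice touch (though strictly speaking one could also just multiply by the invertible $K_i$ on a fixed side throughout).
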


\begin{proof}
    Combine (\ref{omega0})--(\ref{omega2}).
\end{proof}
    
We now find the action of $\Omega_0,\Omega_1,\Omega_2$ on the irreducible $\mathcal{H}$-modules.
\begin{theorem}
\label{omegascalar}
    Let $W$ denote an irreducible $\mathcal{H}$-module of type $(\alpha,\beta,\rho)$. Then each of $\Omega_0$, $\Omega_1$, $\Omega_2$ acts on $W$ as a scalar multiple of the identity. The scalars are given in the table below.
    \begin{center}
        \begin{tabular}{c | c}
            Central element in $\mathcal{H}$ &  Scalar corresponding to $W$\\
            \hline \\
            $\Omega_0$ & $q^{-\rho}$\\ \\
            $\Omega_1$ & $q[k-\rho-\alpha]+[\alpha]$\\ \\
            $\Omega_2$ & $q[h-\rho-\beta]+[\beta]$
        \end{tabular}
    \end{center}
\end{theorem}
    
    \begin{proof}
        We prove the $\Omega_0$-entry of the table. For $\alpha\leq i\leq k-\rho-\alpha$ and $\rho+\beta\leq j\leq h-\beta$, pick a nonzero vector $w\in E^{*}_{i,j}W$. It suffices to show that
        \begin{equation*}
            \Omega_0w=q^{-\rho}w.
        \end{equation*}
        In view of (\ref{omega0}), combine (\ref{aij0}) and Lemma \ref{klem}. The result follows.

        We have proved the $\Omega_0$-entry of the table. The proofs for the other entries are similar, and omitted.
    \end{proof}

We finish this section with a comment.

\begin{lemma}
        The matrices $F^{0}, F^{+}, F^{-}$ mutually commute.
    \end{lemma}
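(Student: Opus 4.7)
The plan is to exploit Lemma \ref{fcentral}, which expresses each of $F^{0}, F^{+}, F^{-}$ as a polynomial in $\Omega_0, \Omega_1, \Omega_2, K_1^{\pm 1}, K_2^{\pm 1}$. The goal is then to show that these five elements generate a commutative subalgebra of $\mathcal{H}$, from which the conclusion follows immediately.

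First I would observe that $\Omega_0, \Omega_1, \Omega_2$ lie in the center of $\mathcal{H}$ by Theorem \ref{central}, so in particular they commute with each other and with $K_1^{\pm 1}$ and $K_2^{\pm 1}$. Next, since $K_1$ and $K_2$ are both diagonal (Definition \ref{kdef}), they commute with each other. Therefore the subalgebra of $\mathcal{H}$ generated by
\begin{equation*}
    \Omega_0,\qquad \Omega_1,\qquad \Omega_2,\qquad K_1^{\pm 1},\qquad K_2^{\pm 1}
\end{equation*}
is commutative. By Lemma \ref{fcentral}, each of $F^{0}, F^{+}, F^{-}$ is a polynomial in these generators, hence lies in this commutative subalgebra. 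The result follows.

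There is no real obstacle here; the work has already been done in Lemma \ref{fcentral} and Theorem \ref{central}. The only minor thing to double-check is that the expressions in Lemma \ref{fcentral} genuinely use only central elements and the diagonal matrices $K_1^{\pm 1}, K_2^{\pm 1}$ (with no residual occurrence of $L_i$ or $R_i$), which one sees by inspection of formulas \eqref{f0center}--\eqref{f-center}.
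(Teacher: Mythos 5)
Your proposal is correct and follows essentially the same route as the paper: the paper's proof simply observes that all terms on the right-hand sides of (\ref{f0center})--(\ref{f-center}) mutually commute, which is exactly your argument that $\Omega_0,\Omega_1,\Omega_2$ are central and $K_1^{\pm1},K_2^{\pm1}$ are diagonal, so $F^{0},F^{+},F^{-}$ lie in a commutative subalgebra. Your added check that no $L_i,R_i$ remain in those formulas is a sensible verification but not a departure from the paper's reasoning.
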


    \begin{proof}
        All the terms on the right-hand sides of (\ref{f0center})--(\ref{f-center}) mutually commute. The result follows.
    \end{proof}

    \section{A subalgebra $\overline{\mathcal{H}}$}

    In this section we define a subalgebra $\overline{\mathcal{H}}$ of $\mathcal{H}$. We find some matrices that are contained in $\overline{\mathcal{H}}$. 
    We describe the action of these matrices on the irreducible $\mathcal{H}$-modules.

    \begin{definition}
        Define 
        \begin{equation*}
            \overline{\mathcal{H}}=\sum_{\ell=0}^{h+k}E^{*}_{\ell}\mathcal{H}E^{*}_{\ell}.
        \end{equation*}
        Note that $\overline{\mathcal{H}}$ is a subalgebra of $\mathcal{H}$.
    \end{definition}

    Our next goal is to find some matrices that are contained in $\overline{\mathcal{H}}$. Note that any diagonal matrix in $\mathcal{H}$ is contained in $\overline{\mathcal{H}}$.

    \begin{lemma}
        The matrices $K_1^{\pm1}, K_2^{\pm1}$ are contained in $\overline{\mathcal{H}}$.
    \end{lemma}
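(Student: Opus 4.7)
The plan is to invoke the remark preceding the lemma, namely that any diagonal matrix lying in $\mathcal{H}$ automatically lies in $\overline{\mathcal{H}}$. So the argument reduces to two observations: (i) $K_1^{\pm 1}, K_2^{\pm 1}$ are diagonal and belong to $\mathcal{H}$, and (ii) every diagonal matrix $D\in \mathcal{H}$ satisfies $D=\sum_{\ell=0}^{h+k}E^{*}_{\ell}DE^{*}_{\ell}$.

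For (i), the matrices $K_1^{\pm 1}, K_2^{\pm 1}$ are diagonal by Definition \ref{kdef}, and they are generators of $\mathcal{H}$ by Definition \ref{hdef}. For (ii), I would use the identity $I=\sum_{\ell=0}^{h+k}E^{*}_{\ell}$ to write
\begin{equation*}
D=IDI=\sum_{\ell=0}^{h+k}\sum_{\ell'=0}^{h+k}E^{*}_{\ell}DE^{*}_{\ell'}.
\end{equation*}
Each $E^{*}_{\ell}$ is diagonal, so $E^{*}_{\ell}DE^{*}_{\ell'}$ is a diagonal matrix whose $(u,u)$-entry is nonzero only if $u\in P_{\ell}\cap P_{\ell'}$, which is empty for $\ell\neq \ell'$. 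Hence the cross terms vanish and $D=\sum_{\ell=0}^{h+k}E^{*}_{\ell}DE^{*}_{\ell}\in \overline{\mathcal{H}}$. Applying this with $D$ equal to each of $K_1^{\pm 1}, K_2^{\pm 1}$ yields the lemma.

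There is no real obstacle here; the statement is immediate from the structural observation in the remark. The only thing to be careful about is making explicit why diagonal matrices collapse to the diagonal sum $\sum_{\ell}E^{*}_{\ell}(\cdot)E^{*}_{\ell}$, which follows from the fact that the $E^{*}_{\ell}$ partition the identity and are supported on disjoint sets $P_{\ell}$.
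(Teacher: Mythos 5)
Your proposal is correct and follows essentially the same route as the paper, which simply notes that $K_1^{\pm1}, K_2^{\pm1}$ are diagonal and invokes the preceding remark that any diagonal matrix in $\mathcal{H}$ lies in $\overline{\mathcal{H}}$. Your explicit verification of that remark, using $I=\sum_{\ell=0}^{h+k}E^{*}_{\ell}$ and the disjointness of the sets $P_{\ell}$, is just a spelled-out version of the same argument.
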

    \begin{proof}
        The result follows from the fact that $K_1^{\pm1}, K_2^{\pm1}$ are diagonal.
    \end{proof}

    \begin{definition}
        Define
        \begin{equation}
        \label{rldef}
            R=L_1R_2,\qquad \qquad \qquad L=L_2R_1.
        \end{equation}
        By Lemma \ref{appendix2} in the appendix, 
        \begin{equation}
        \label{rldef2}
            R=R_2L_1,\qquad \qquad \qquad L=R_1L_2.
        \end{equation}
        Note that $R,L\in \mathcal{H}$. Also note that $R=L^{t}$.
    \end{definition}

     \begin{lemma}
     \label{rllem}
        For $u,v\in P$, the $(u,v)$-entries of $R,L$ are
        \begin{align*}
            R_{u,v}&=\begin{cases}
                1&\text{if $u$ $\backslash$-covers $u\cap v$ and $v$ $\slash$-covers $u\cap v$,}\\
                0&\text{otherwise,}
            \end{cases}\\
            L_{u,v}&=\begin{cases}
                1&\text{if $u$ $\slash$-covers $u\cap v$ and $v$ $\backslash$-covers $u\cap v$,}\\
                0&\text{otherwise.}
            \end{cases}
        \end{align*}
    \end{lemma}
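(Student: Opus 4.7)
Since $R=L_1R_2$, I would compute $R_{u,v}=\sum_{w\in P}(L_1)_{u,w}(R_2)_{w,v}$ and interpret each term via Definition \ref{l1l2r1r2def}: the contribution is $1$ exactly when $w$ $\slash$-covers $u$ and $w$ $\backslash$-covers $v$, and $0$ otherwise. The plan is to prove that the set of such $w$ has size $1$ precisely when the combinatorial condition in the statement holds (namely, $u$ $\backslash$-covers $u\cap v$ and $v$ $\slash$-covers $u\cap v$), and has size $0$ otherwise.

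For the forward direction, suppose $w$ $\slash$-covers $u$ and $w$ $\backslash$-covers $v$. Then $u,v\subseteq w$ and $\dim w=\dim u+1=\dim v+1$. The case $u=v$ is ruled out because the two covering types impose incompatible values on $\dim(w\cap y)$. Hence $u\neq v$, so $u+v$ strictly contains $u$ and fits inside $w$; I conclude $w=u+v$ is unique and $\dim(u\cap v)=\dim u-1$, so both $u$ and $v$ cover $u\cap v$. From the $\slash$/$\backslash$ conditions I obtain $\dim(v\cap y)=\dim(w\cap y)=\dim(u\cap y)+1$. A modular-law estimate, using $(u\cap y)+(v\cap y)\subseteq w\cap y$ together with inclusion--exclusion, forces $\dim(u\cap v\cap y)\geq \dim(u\cap y)$, which combined with $u\cap v\cap y\subseteq u\cap y$ gives equality. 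Thus $u$ $\backslash$-covers $u\cap v$, and likewise $v$ $\slash$-covers $u\cap v$.

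For the converse, assume the stated covering conditions. Then $\dim(u\cap v)=\dim u-1=\dim v-1$, so $w:=u+v$ satisfies $\dim w=\dim u+1$ and covers both $u$ and $v$. The assumption yields $\dim(v\cap y)=\dim(u\cap y)+1$ and $\dim(u\cap v\cap y)=\dim(u\cap y)$, so $\dim((u\cap y)+(v\cap y))=\dim(u\cap y)+1$. Since $(u\cap y)+(v\cap y)\subseteq w\cap y$, Lemma \ref{coverlem} applied to the pairs $(u,w)$ and $(v,w)$ forces $w$ to $\slash$-cover $u$ and $\backslash$-cover $v$. Uniqueness of $w$ is automatic from the first half, so $R_{u,v}=1$.

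The formula for $L$ follows from $L=R^{t}$ (equivalently from $L=L_2R_1$ via the analogous counting argument, with the roles of $\slash$ and $\backslash$ swapped). The main obstacle is the dimension estimate $\dim(u\cap v\cap y)\geq \dim(u\cap y)$ in the forward direction, which hinges on pairing the modular law with the equality $\dim(w\cap y)=\dim(u\cap y)+1$; once that is in hand, everything else is direct bookkeeping with the covering types from Definition \ref{slashcover}.
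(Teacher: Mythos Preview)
Your argument is correct and, at the level of strategy, matches the paper: the paper simply writes ``Immediate from (\ref{rldef}),'' and you have carefully unpacked what that immediacy entails when one expands $R=L_1R_2$ as a matrix product. Every step checks out, including the modular-law estimate that pins down $\dim(u\cap v\cap y)$.

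That said, there is a shorter route that makes the paper's ``immediate'' more literally true, and it is worth knowing. Use instead the alternative form $R=R_2L_1$ from (\ref{rldef2}) (which comes from Lemma~\ref{appendix2}). Then
\[
R_{u,v}=\sum_{w}(R_2)_{u,w}(L_1)_{w,v}
\]
counts those $w$ with $u$ $\backslash$-covering $w$ and $v$ $\slash$-covering $w$. Any such $w$ lies in $u\cap v$ and has $\dim w=\dim u-1=\dim v-1$; ruling out $u=v$ as you did forces $w=u\cap v$, and the two covering conditions on $w$ are then \emph{exactly} the conditions in the statement. No modular-law inequality is needed, and the converse is a one-line check that $w=u\cap v$ works. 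Your approach via $L_1R_2$ produces $w=u+v$ instead, which then requires the extra work to transfer the $\slash/\backslash$ information down to $u\cap v$; the $R_2L_1$ form hands you $u\cap v$ directly.
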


    \begin{proof}
        Immediate from (\ref{rldef}).
    \end{proof}

    \begin{lemma}
    \label{rle}
        For $0\leq i\leq k$ and $0\leq j\leq h$,
        \begin{equation}
        \label{rlev}
            RE^{*}_{i,j}V\subseteq E_{i-1,j+1}^{*}V, \qquad \qquad LE^{*}_{i,j}V\subseteq E_{i+1,j-1}^{*}V.
        \end{equation}
    \end{lemma}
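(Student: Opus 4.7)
The plan is to derive the two inclusions directly from the factorizations $R = L_1R_2$ and $L = L_2R_1$ in (\ref{rldef}), together with the four basic inclusions in Lemma \ref{lrv}. Since composition of maps preserves containment of images, the argument will just be a two-step chase of subspaces through Lemma \ref{lrv}.

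In more detail, first I would fix $i,j$ with $0\leq i\leq k$ and $0\leq j\leq h$. For the $R$-inclusion, I would apply Lemma \ref{lrv} to get $R_2 E^{*}_{i,j}V \subseteq E^{*}_{i,j+1}V$, and then apply it again (with $j$ replaced by $j+1$) to get $L_1 E^{*}_{i,j+1}V \subseteq E^{*}_{i-1,j+1}V$. Composing,
\begin{equation*}
R E^{*}_{i,j}V \;=\; L_1 R_2 E^{*}_{i,j}V \;\subseteq\; L_1 E^{*}_{i,j+1}V \;\subseteq\; E^{*}_{i-1,j+1}V.
\end{equation*}
For the $L$-inclusion, the argument is symmetric: $R_1 E^{*}_{i,j}V \subseteq E^{*}_{i+1,j}V$ and then $L_2 E^{*}_{i+1,j}V \subseteq E^{*}_{i+1,j-1}V$, so
\begin{equation*}
L E^{*}_{i,j}V \;=\; L_2 R_1 E^{*}_{i,j}V \;\subseteq\; L_2 E^{*}_{i+1,j}V \;\subseteq\; E^{*}_{i+1,j-1}V.
\end{equation*}

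There is really no obstacle here; the notational convention $E^{*}_{r,s}=0$ when $(r,s)$ falls outside $0\leq r\leq k$, $0\leq s\leq h$ automatically takes care of boundary cases (for example, when $i=0$ or $j=h$, both sides are zero). One could alternatively use the equivalent factorizations $R=R_2L_1$ and $L=R_1L_2$ from (\ref{rldef2}) and reach the same conclusion; it is reassuring that the two orders of composition land in the same target subspace, which is consistent with the commutation relations $L_1R_2=R_2L_1$ and $L_2R_1=R_1L_2$ from the appendix.
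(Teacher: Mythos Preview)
Your proof is correct and follows exactly the paper's approach: the paper's proof simply says ``Combine (\ref{lev}), (\ref{rev}), (\ref{rldef}),'' which is precisely the two-step subspace chase you wrote out. Your remarks on the boundary convention and the alternative factorizations from (\ref{rldef2}) are sound but not needed for the argument.
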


    \begin{proof}
        Combine (\ref{lev}), (\ref{rev}), (\ref{rldef}).
    \end{proof}

    \begin{lemma}
        The matrices $R,L$ are contained in $\overline{\mathcal{H}}$.
    \end{lemma}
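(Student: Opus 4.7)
The plan is to show that each matrix commutes with every $E^*_\ell$, and that every $E^*_\ell$ already lies in $\mathcal{H}$. This reduces the problem to observing that $R$ and $L$ preserve the grading by total dimension.

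First, I would show that $R$ and $L$ commute with each $E^*_\ell$. By Lemma \ref{rle}, $R E^*_{i,j}V \subseteq E^*_{i-1,j+1}V$ and $L E^*_{i,j}V \subseteq E^*_{i+1,j-1}V$. In both cases the sum $i+j$ is preserved, so using $E^*_\ell = \sum_{i+j=\ell} E^*_{i,j}$ and the fact that the sum $V=\bigoplus_{i,j}E^*_{i,j}V$ is direct, we get $R E^*_\ell V \subseteq E^*_\ell V$ and $L E^*_\ell V \subseteq E^*_\ell V$. Combined with $E^*_i E^*_j = \delta_{i,j} E^*_i$, this yields $E^*_\ell R = R E^*_\ell$ and $E^*_\ell L = L E^*_\ell$ for $0 \leq \ell \leq h+k$.

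Next, I would note that each $E^*_\ell$ lies in $\mathcal{H}$. Indeed, each $E^*_{i,j}$ is in the subalgebra $\mathcal{K}$ by construction, and $\mathcal{K}$ is a subalgebra of $\mathcal{H}$ since $\mathcal{K}$ is generated by $K_1^{\pm 1}, K_2^{\pm 1}$ (cited after Definition \ref{kdef}). Therefore $E^*_\ell = \sum_{i+j=\ell} E^*_{i,j} \in \mathcal{K} \subseteq \mathcal{H}$.

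Finally I would assemble the two observations. Since $I = \sum_{\ell=0}^{h+k} E^*_\ell$, and $R, L \in \mathcal{H}$ by Definition \ref{l1l2r1r2def} and (\ref{rldef}), we have
\begin{equation*}
R = \sum_{\ell=0}^{h+k} R E^*_\ell = \sum_{\ell=0}^{h+k} E^*_\ell R E^*_\ell, \qquad L = \sum_{\ell=0}^{h+k} L E^*_\ell = \sum_{\ell=0}^{h+k} E^*_\ell L E^*_\ell,
\end{equation*}
where the second equality in each line uses the commutation established above together with $E^*_\ell E^*_\ell = E^*_\ell$. Each summand lies in $E^*_\ell \mathcal{H} E^*_\ell$, so $R, L \in \overline{\mathcal{H}}$. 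There is no real obstacle here; the only point that requires care is the passage from $E^*_{i,j} \in \mathcal{K}$ to $E^*_\ell \in \mathcal{H}$, which is why the lemma depends on the inclusion $\mathcal{K} \subseteq \mathcal{H}$.
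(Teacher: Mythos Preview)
Your proof is correct and follows essentially the same approach as the paper: both use Lemma~\ref{rle} (equation~(\ref{rlev})) to conclude that $R=\sum_{\ell}E^{*}_{\ell}RE^{*}_{\ell}$ and $L=\sum_{\ell}E^{*}_{\ell}LE^{*}_{\ell}$, which immediately gives membership in $\overline{\mathcal{H}}$ since $R,L\in\mathcal{H}$. You supply more detail than the paper (explicitly arguing commutation with $E^{*}_{\ell}$ and verifying $E^{*}_{\ell}\in\mathcal{H}$), but the underlying argument is the same.
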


    \begin{proof}
        Recall that $R,L\in \mathcal{H}$. By (\ref{rlev}), 
        \begin{equation*}
            R=\sum_{\ell=0}^{h+k}E^{*}_{\ell}RE^{*}_{\ell},\qquad \qquad L=\sum_{\ell=0}^{h+k}E^{*}_{\ell}LE^{*}_{\ell}.
        \end{equation*}
        The result follows.
    \end{proof}

    Next we bring in $F^{0}, F^{+}, F^{-}$, $F$. 

    \begin{lemma}
    \label{fcor}
        The matrices $F^{0}, F^{+}, F^{-}, F$ are contained in $\overline{\mathcal{H}}$.
    \end{lemma}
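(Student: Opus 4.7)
The plan is to argue exactly as in the preceding proof that $R, L \in \overline{\mathcal{H}}$. The two ingredients needed are already available: first, by Lemma \ref{finh} together with Definition \ref{deff}, each of $F^{0}, F^{+}, F^{-}, F$ lies in $\mathcal{H}$; second, by Lemma \ref{fe*}, each of these matrices preserves $E^{*}_{i,j}V$ for every $0\leq i\leq k$ and $0\leq j\leq h$.

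From the $E^{*}_{i,j}$-invariance I would pass to $E^{*}_{\ell}$-invariance. Since $E^{*}_{\ell}V=\sum_{i+j=\ell}E^{*}_{i,j}V$, summing the inclusion in Lemma \ref{fe*} over all $(i,j)$ with $i+j=\ell$ gives $MV_{\ell}\subseteq E^{*}_{\ell}V$ for $M\in\{F^{0},F^{+},F^{-},F\}$ and every $0\leq \ell\leq h+k$. Equivalently, $E^{*}_{m}ME^{*}_{\ell}=0$ whenever $m\neq \ell$. Using the resolution of the identity $I=\sum_{\ell=0}^{h+k}E^{*}_{\ell}$ twice, I then get
\begin{equation*}
    M=\sum_{\ell=0}^{h+k}E^{*}_{\ell}ME^{*}_{\ell}.
\end{equation*}
Because $M\in\mathcal{H}$ and each $E^{*}_{\ell}$ is a diagonal matrix (hence lies in $\mathcal{K}\subseteq \mathcal{H}$), every summand $E^{*}_{\ell}ME^{*}_{\ell}$ lies in $E^{*}_{\ell}\mathcal{H}E^{*}_{\ell}$, and therefore $M\in \overline{\mathcal{H}}$.

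There is essentially no obstacle; the statement is a direct corollary of Lemma \ref{fe*} and the definition of $\overline{\mathcal{H}}$, exactly parallel to the $R,L$ case. The only thing to be careful about is citing the correct inclusions from Lemma \ref{fe*} for all four matrices (the left equation of \eqref{fe*2} covers $F^{-}$ and the right equation covers $F$), after which the proof is a one-line reduction.
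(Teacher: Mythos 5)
Your proposal is correct and follows essentially the same route as the paper: cite Lemma \ref{finh} (plus Definition \ref{deff} for $F$) to get membership in $\mathcal{H}$, then use the $E^{*}_{i,j}$-invariance from Lemma \ref{fe*} to write each matrix $M$ as $\sum_{\ell=0}^{h+k}E^{*}_{\ell}ME^{*}_{\ell}$, which places it in $\overline{\mathcal{H}}$. The only difference is that you spell out the passage from $E^{*}_{i,j}$- to $E^{*}_{\ell}$-invariance and the resolution of the identity, which the paper leaves implicit.
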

    \begin{proof}
        Recall from Lemma \ref{finh} that $F^{0},F^{+},F^{-}, F\in \mathcal{H}$. By Lemma \ref{fe*},  \begin{align}
            &F^{0}=\sum_{\ell=0}^{h+k}E^{*}_{\ell}F^{0}E^{*}_{\ell}, &F^{+}=\sum_{\ell=0}^{h+k}E^{*}_{\ell}F^{+}E^{*}_{\ell}, \label{fev1}\\
            &F^{-}=\sum_{\ell=0}^{h+k}E^{*}_{\ell}F^{-}E^{*}_{\ell}, &F=\sum_{\ell=0}^{h+k}E^{*}_{\ell}FE^{*}_{\ell}.\label{fev}
        \end{align}
        The result follows.
    \end{proof}
    
    Next we define a matrix $A$.

    \begin{definition}
    \label{atildelem}
        Define $A\in \text{Mat}_{P}(\mathbb{C})$ as follows. For $u,v\in P$ the $(u,v)$-entry of $A$ is
        \begin{equation*}
            A_{u,v}=\begin{cases}
                1&\text{if each of $u,v$ covers $u\cap v$,}\\
                0&\text{otherwise.}
            \end{cases}
        \end{equation*}
    \end{definition}

    \begin{lemma}
        We have
        \begin{equation}
            \label{adef}
            A=R+L+F.
        \end{equation}
    \end{lemma}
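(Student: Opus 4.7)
The plan is to verify the identity entry by entry. Fix $u,v\in P$; I would compare the $(u,v)$-entry of $A$ with the $(u,v)$-entry of $R+L+F$, using the combinatorial description of $F$ given just before this statement (so that $F_{u,v}=1$ iff each of $u,v$ covers $u\cap v$ and $\dim(u\cap y)=\dim(v\cap y)$), together with the combinatorial descriptions of $R,L$ in Lemma \ref{rllem} and of $A$ in Definition \ref{atildelem}.

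First I would dispose of the trivial cases. If $u=v$, then $u\cap v=u$, and $u$ does not cover itself, so $A_{u,u}=0$; the same reasoning shows $R_{u,u}=L_{u,u}=F_{u,u}=0$. If $u\neq v$ but it is not the case that each of $u,v$ covers $u\cap v$, then $A_{u,v}=0$ by definition, and the combinatorial descriptions of $R,L,F$ all require this covering condition, so each of $R_{u,v},L_{u,v},F_{u,v}$ is $0$ as well.

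The remaining case is $u\neq v$ with each of $u,v$ covering $u\cap v$; here $A_{u,v}=1$, and I need to show exactly one of $R_{u,v},L_{u,v},F_{u,v}$ equals $1$. By Lemma \ref{coverlem} and Definition \ref{slashcover}, each of the two coverings $u\cap v\lessdot u$ and $u\cap v\lessdot v$ is either a $\slash$-cover (which increases $\dim(\,\cdot\cap y)$ by $1$) or a $\backslash$-cover (which leaves $\dim(\,\cdot\cap y)$ unchanged). This gives four subcases. If both are $\slash$-covers, then $\dim(u\cap y)=\dim(v\cap y)=\dim((u\cap v)\cap y)+1$, so $F_{u,v}=1$ while $R_{u,v}=L_{u,v}=0$. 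If both are $\backslash$-covers, then $\dim(u\cap y)=\dim(v\cap y)=\dim((u\cap v)\cap y)$, so again $F_{u,v}=1$ and $R_{u,v}=L_{u,v}=0$. If $u$ $\slash$-covers and $v$ $\backslash$-covers $u\cap v$, then $L_{u,v}=1$ by Lemma \ref{rllem}, while $R_{u,v}=0$ and $F_{u,v}=0$ (the two dimensions disagree). If $u$ $\backslash$-covers and $v$ $\slash$-covers $u\cap v$, then symmetrically $R_{u,v}=1$ and $L_{u,v}=F_{u,v}=0$. In every subcase $A_{u,v}=R_{u,v}+L_{u,v}+F_{u,v}=1$, which completes the entry-by-entry check and yields (\ref{adef}).

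There is no real obstacle: once the combinatorial interpretation of $F$ (stated just above) is in hand, the proof is just the four-way case analysis organized by the $\slash$/$\backslash$ refinement of the covering relation, and the verification is completely routine.
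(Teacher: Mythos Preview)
Your proof is correct and follows essentially the same approach as the paper: an entry-by-entry verification using the combinatorial descriptions of $A$, $R$, $L$, and $F$. The paper's proof is a one-liner (``Combine (\ref{fdef}), (\ref{rldef}) and Definition \ref{atildelem}. The result follows from linear algebra''), and you have simply written out the routine case analysis this entails; the only minor difference is that you invoke the consolidated combinatorial description of $F$ rather than splitting into $F^{0}+F^{+}+F^{-}$, which is equivalent since that description has already been established. (One small quibble: the combinatorial description of $F$ appears back in Section~3, not ``just before this statement.'')
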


    \begin{proof}
        Combine (\ref{fdef}), (\ref{rldef})  and Definition \ref{atildelem}. The result follows from linear algebra.
    \end{proof}

    \begin{lemma}
        We have
        \begin{align}
            A&=\bigl(L_1+L_2\bigr)\bigl(R_1+R_2\bigr)-(q-1)^{-1}\Bigl(q^{\frac{h+k}{2}}K_1K_2^{-1}-I\Bigr),\label{adef2}\\
            &=\bigl(R_1+R_2\bigr)\bigl(L_1+L_2\bigr)-(q-1)^{-1}\Bigl(q^{\frac{h+k}{2}}K_1^{-1}K_2-I\Bigr).\label{adef3}
        \end{align}
    \end{lemma}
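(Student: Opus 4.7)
The plan is to derive both expressions for $A$ by substituting the two forms of $F$ given in Lemma~\ref{ftildeeq} into the decomposition $A=R+L+F$ from (\ref{adef}), and expanding the products $(L_1+L_2)(R_1+R_2)$ and $(R_1+R_2)(L_1+L_2)$ term by term.

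For (\ref{adef2}), I would first expand
\begin{equation*}
    (L_1+L_2)(R_1+R_2)=L_1R_1+L_1R_2+L_2R_1+L_2R_2.
\end{equation*}
By the definition $R=L_1R_2$ and $L=L_2R_1$ in (\ref{rldef}), the two cross terms combine to give $R+L$, so
\begin{equation*}
    (L_1+L_2)(R_1+R_2)=L_1R_1+L_2R_2+R+L.
\end{equation*}
Then substitute the expression for $L_1R_1+L_2R_2$ obtained by rearranging (\ref{fdef2}), and use $A=R+L+F$. The identity (\ref{adef2}) falls out after collecting terms.

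For (\ref{adef3}), I would proceed in parallel, using the alternative expressions $R=R_2L_1$ and $L=R_1L_2$ from (\ref{rldef2}) to handle the cross terms when expanding $(R_1+R_2)(L_1+L_2)$, and then substituting for $R_1L_1+R_2L_2$ using the rearrangement of (\ref{fdef3}).

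I do not expect any genuine obstacle: the proof is purely a routine algebraic bookkeeping exercise once the two decompositions of $F$ from Lemma~\ref{ftildeeq} and the two alternative factorizations of $R,L$ from (\ref{rldef}), (\ref{rldef2}) are in hand. The only point to watch is getting the signs on the $(q-1)^{-1}\bigl(q^{(h+k)/2}K_1^{\pm 1}K_2^{\mp 1}-I\bigr)$ terms correct when moving them between sides of the equation; this is where I would double-check the arithmetic.
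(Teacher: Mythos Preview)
Your proposal is correct and follows essentially the same route as the paper's proof, which simply cites (\ref{fdef2}), (\ref{rldef}), (\ref{adef}) for (\ref{adef2}) and (\ref{fdef3}), (\ref{rldef}), (\ref{adef}) for (\ref{adef3}). Your write-up is in fact slightly more precise than the paper's, since for (\ref{adef3}) you correctly invoke (\ref{rldef2}) to identify the cross terms $R_2L_1$ and $R_1L_2$ with $R$ and $L$, whereas the paper only points to (\ref{rldef}).
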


    \begin{proof}
         Equation (\ref{adef2}) follows from (\ref{fdef2}), (\ref{rldef}), (\ref{adef}). Equation (\ref{adef3}) follows from (\ref{fdef3}), (\ref{rldef}), (\ref{adef}).
    \end{proof}

    \begin{lemma}
    \label{acor}
        The matrix $A$ is contained in $\overline{\mathcal{H}}$.
    \end{lemma}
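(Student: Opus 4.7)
The plan is to invoke the decomposition $A=R+L+F$ from (\ref{adef}) and reduce the statement to facts already in hand. First, I would note that $\overline{\mathcal{H}}$ is a subalgebra of $\mathcal{H}$, so in particular it is closed under finite sums. Thus it suffices to show that each summand $R$, $L$, $F$ lies in $\overline{\mathcal{H}}$. But $R,L\in\overline{\mathcal{H}}$ was established just above, and $F\in \overline{\mathcal{H}}$ is the content of Lemma \ref{fcor}. Combining these observations yields $A\in \overline{\mathcal{H}}$.

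As a sanity check, I would verify the result from scratch by checking directly that $A=\sum_{\ell=0}^{h+k}E^{*}_{\ell}AE^{*}_{\ell}$. If $A_{u,v}\neq 0$ then each of $u,v$ covers $u\cap v$ by Definition \ref{atildelem}, so $\dim u=\dim(u\cap v)+1=\dim v$; hence $u,v$ lie in the same $P_{\ell}$. This shows $AE^{*}_{\ell}V\subseteq E^{*}_{\ell}V$ for every $\ell$, so the claimed block decomposition of $A$ holds. Since $A\in \mathcal{H}$ by (\ref{adef}) together with $R,L,F\in\mathcal{H}$, this cross-check confirms $A\in \overline{\mathcal{H}}$.

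There is no real obstacle here: the lemma is a formal consequence of (\ref{adef}) and Lemma \ref{fcor}, combined with the observation that $\overline{\mathcal{H}}$ is an algebra. The only modest point to keep in mind is that one needs both containments simultaneously, namely $A\in \mathcal{H}$ and $A$ commutes with the block structure given by the $E^{*}_{\ell}$, and both are immediate from the preceding lemmas.
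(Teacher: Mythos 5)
Your proposal is correct and follows the paper's own route: the paper proves this lemma as an immediate consequence of the decomposition $A=R+L+F$ in (\ref{adef}), together with the preceding lemmas placing $R$, $L$, and $F$ in $\overline{\mathcal{H}}$ and closure of the subalgebra under addition. The direct block-diagonal cross-check via $A=\sum_{\ell}E^{*}_{\ell}AE^{*}_{\ell}$ is a harmless extra confirmation, not a different argument.
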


    \begin{proof}
        Immediate from (\ref{adef}).
    \end{proof}

    \begin{lemma}
        For $0\leq i\leq k$ and $0\leq j\leq h$,
        \begin{equation}
        \label{av}
            AE^{*}_{i,j}V\subseteq E_{i+1,j-1}^{*}V+E^{*}_{i,j}V+E_{i-1,j+1}^{*}V.
        \end{equation}
    \end{lemma}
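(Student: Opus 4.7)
The plan is to use the decomposition $A=R+L+F$ established in equation (\ref{adef}) and apply the action rules for $R$, $L$, and $F$ on the subspace $E^{*}_{i,j}V$ that have already been recorded.

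First, I would invoke Lemma \ref{rle}, which gives the two containments
\begin{equation*}
    RE^{*}_{i,j}V\subseteq E^{*}_{i-1,j+1}V,\qquad LE^{*}_{i,j}V\subseteq E^{*}_{i+1,j-1}V.
\end{equation*}
Next, I would invoke Lemma \ref{fe*}, which ensures that $FE^{*}_{i,j}V\subseteq E^{*}_{i,j}V$ (the $F$-case is the rightmost inclusion in (\ref{fe*2})).

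Since $A=R+L+F$ by (\ref{adef}), applying $A$ to any vector $w\in E^{*}_{i,j}V$ gives
\begin{equation*}
    Aw = Rw + Lw + Fw \in E^{*}_{i-1,j+1}V + E^{*}_{i,j}V + E^{*}_{i+1,j-1}V,
\end{equation*}
which is exactly the desired containment (\ref{av}). There is no real obstacle here: the statement is a direct combinatorial packaging of the fact that the covering relation on $P$ refines into the $\slash$-cover and $\backslash$-cover relations of Definition \ref{slashcover}, so the three summands of $A$ shift the bidegree $(i,j)$ in the three predictable ways. The only thing worth a sentence of explanation is that the boundary cases (where $i$ or $j$ sits at $0$ or at its maximum) are handled by the convention $E^{*}_{r,s}=0$ outside the allowed range, so the stated containment remains correct without further case analysis.
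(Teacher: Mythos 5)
Your proof is correct and matches the paper's argument: the paper likewise combines the decomposition $A=R+L+F$ from (\ref{adef}) with the containments in Lemma \ref{rle} and the $F$-stability of $E^{*}_{i,j}V$ from Lemma \ref{fe*}. Your remark about the convention $E^{*}_{r,s}=0$ outside the allowed range is a fine touch but not needed beyond what the paper already records.
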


    \begin{proof}
        In view of (\ref{adef}), combine (\ref{rlev}) and the right equation in (\ref{fev}).
    \end{proof}
    
    Next we define a matrix $A^*$.

    \begin{definition}
    \label{a*def}
        Define
        \begin{equation*}
            A^*=q^{\frac{k}{2}}K_1^{-1}.
        \end{equation*}
    \end{definition}

    \begin{lemma}
    \label{a*entry}
        For $u\in P$ the $(u,u)$-entry of $A^*$ is
        \begin{equation*}
            \bigl(A^*\bigr)_{u,u}=q^{i},
        \end{equation*}   
        where $u\in P_{i,j}$.
        \end{lemma}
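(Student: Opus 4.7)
The plan is to simply unpack the definitions. By Definition~\ref{a*def}, we have $A^* = q^{k/2} K_1^{-1}$, so $A^*$ is diagonal (since $K_1$ is diagonal by Definition~\ref{kdef}), and its $(u,u)$-entry is $q^{k/2}$ times the $(u,u)$-entry of $K_1^{-1}$.

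Next, I would recall from Definition~\ref{kdef} that for $u \in P_{i,j}$ the $(u,u)$-entry of $K_1$ is $q^{k/2 - i}$, so the $(u,u)$-entry of $K_1^{-1}$ is $q^{i - k/2}$. Multiplying by $q^{k/2}$ gives $q^i$, which is the claimed formula.

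There is no obstacle here; the proof is a one-line computation from the definitions of $A^*$ and $K_1$. The only thing to note is that $u \in P_{i,j}$ determines $i = \dim(u \cap y)$, which matches the exponent appearing in the answer.
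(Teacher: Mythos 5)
Your computation is correct and is exactly the argument the paper intends: the paper's proof simply cites Definitions \ref{kdef} and \ref{a*def}, and your one-line calculation $q^{k/2}\cdot q^{i-k/2}=q^{i}$ is the spelled-out version of that.
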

        \begin{proof}
            Immediate from Definitions \ref{kdef}, \ref{a*def}.
        \end{proof}

     \begin{lemma}
        The matrix $A^*$ is contained in $\overline{\mathcal{H}}$.
    \end{lemma}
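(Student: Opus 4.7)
The plan is to exploit the fact that $A^{*}$ is diagonal. By Definition \ref{a*def} we have $A^{*}=q^{k/2}K_{1}^{-1}$, and $K_{1}^{-1}$ is a diagonal matrix by Definition \ref{kdef}. Since $K_{1}^{\pm 1}$ sit among the generators of $\mathcal{H}$ in Definition \ref{hdef}, we get $A^{*}\in\mathcal{H}$ for free. So the only substantive point is showing that a diagonal element of $\mathcal{H}$ automatically lies in $\overline{\mathcal{H}}$.

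For this I would invoke the remark made just before the previous lemma, namely that any diagonal matrix in $\mathcal{H}$ belongs to $\overline{\mathcal{H}}$. If I wanted to spell out why, I would write $I=\sum_{\ell=0}^{h+k}E^{*}_{\ell}$, so that
\begin{equation*}
A^{*}=\sum_{i=0}^{h+k}\sum_{j=0}^{h+k}E^{*}_{i}A^{*}E^{*}_{j},
\end{equation*}
and then observe that because $A^{*}$ is diagonal it commutes with each $E^{*}_{\ell}$, forcing $E^{*}_{i}A^{*}E^{*}_{j}=A^{*}E^{*}_{i}E^{*}_{j}=\delta_{i,j}A^{*}E^{*}_{i}$. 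Hence the double sum collapses to $\sum_{\ell=0}^{h+k}E^{*}_{\ell}A^{*}E^{*}_{\ell}$, which lies in $\overline{\mathcal{H}}$ by definition.

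There is essentially no obstacle here; the whole content is that $A^{*}$ is a scalar multiple of $K_{1}^{-1}$, which is diagonal and is a generator of $\mathcal{H}$. I would therefore write a one-line proof citing Definition \ref{a*def} together with the earlier lemma that places $K_{1}^{\pm 1}$ in $\overline{\mathcal{H}}$.
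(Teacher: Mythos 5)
Your proposal is correct and follows essentially the same route as the paper: the paper's proof simply notes that $A^{*}$ is diagonal and lies in $\mathcal{H}$ (by Definition \ref{a*def}), then invokes the earlier observation that any diagonal matrix in $\mathcal{H}$ belongs to $\overline{\mathcal{H}}$. Your expanded justification of that observation (commuting with each $E^{*}_{\ell}$ so the double sum collapses), and the alternative of citing $A^{*}=q^{k/2}K_{1}^{-1}$ with $K_{1}^{-1}\in\overline{\mathcal{H}}$, are both fine but add nothing beyond the paper's one-line argument.
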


    \begin{proof}
        By Definition \ref{a*def}, the matrix $A^*$ is diagonal, and contained in $\mathcal{H}$. The result follows.
    \end{proof}

    Next we bring in $\Omega_0, \Omega_1, \Omega_2$.

    \begin{lemma}
        The matrices $\Omega_0,\Omega_1, \Omega_2$ are contained in $\overline{\mathcal{H}}$.
    \end{lemma}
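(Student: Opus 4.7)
The plan is to argue that $\Omega_0,\Omega_1,\Omega_2$ are built from ingredients already known to lie in $\overline{\mathcal{H}}$, and then invoke the fact that $\overline{\mathcal{H}}$ is a subalgebra to conclude. Specifically, I would point to the explicit formulas (\ref{omega0})--(\ref{omega2}) from Theorem \ref{central}, which express each of $\Omega_0,\Omega_1,\Omega_2$ as a $\mathbb{C}$-linear combination of products drawn from the list
\begin{equation*}
    F^{0},\qquad F^{+},\qquad F^{-},\qquad K_1^{\pm 1},\qquad K_2^{\pm 1},\qquad I.
\end{equation*}

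The previous lemmas in this section have already established that every matrix in this list belongs to $\overline{\mathcal{H}}$: the matrices $K_1^{\pm 1},K_2^{\pm 1}$ are diagonal, hence trivially lie in $E^*_\ell \mathcal{H} E^*_\ell$ summed over $\ell$; the matrices $F^{0},F^{+},F^{-}$ were shown to lie in $\overline{\mathcal{H}}$ in Lemma \ref{fcor} via the decompositions (\ref{fev1}) and (\ref{fev}); and $I=\sum_{\ell=0}^{h+k}E^{*}_{\ell}$ is clearly in $\overline{\mathcal{H}}$.

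Since $\overline{\mathcal{H}}$ is a subalgebra of $\mathcal{H}$ (as noted in the definition of $\overline{\mathcal{H}}$), it is closed under sums and products. Therefore the right-hand sides of (\ref{omega0}), (\ref{omega1}), (\ref{omega2}) belong to $\overline{\mathcal{H}}$, giving $\Omega_0,\Omega_1,\Omega_2\in \overline{\mathcal{H}}$. There is no real obstacle; the statement is a direct consequence of Theorem \ref{central}, Lemma \ref{fcor}, and closure of $\overline{\mathcal{H}}$ under the algebra operations. Alternatively, one could give an even shorter argument by noting from Corollary \ref{omegaev} that each $\Omega_i$ preserves every subspace $E^*_{i',j'}V$, hence in particular each $E^*_\ell V$, so $\Omega_i=\sum_{\ell=0}^{h+k}E^*_\ell \Omega_i E^*_\ell$ with each summand in $E^*_\ell \mathcal{H} E^*_\ell$.
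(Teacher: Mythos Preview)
Your proof is correct and follows essentially the same approach as the paper: the paper's proof simply states that every term on the right-hand sides of (\ref{omega0})--(\ref{omega2}) lies in $\overline{\mathcal{H}}$, which is exactly your main argument. Your alternative via Corollary~\ref{omegaev} is also valid and equally short.
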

    
    \begin{proof}
        For each of (\ref{omega0})--(\ref{omega2}), we observe that every term on the right hand side is contained in $\overline{\mathcal{H}}$. The result follows.
    \end{proof}

    \section{The action of $R, L, A, A^*$ on the irreducible $\mathcal{H}$-modules}

    In this section we describe the action of $R, L, A, A^*$ on the irreducible $\mathcal{H}$-modules. For the rest of this section, let $W$ denote an irreducible $\mathcal{H}$-module of type $(\alpha,\beta,\rho)$. 
    \begin{lemma}
    \label{rlewlem}
        For $\alpha\leq i\leq k-\rho-\alpha$ and $\rho+\beta\leq j\leq h-\beta$,
        \begin{equation}
        \label{rlew}
            RE^{*}_{i,j}W\subseteq E_{i-1,j+1}^{*}W, \qquad \qquad LE^{*}_{i,j}W\subseteq E_{i+1,j-1}^{*}W.
        \end{equation}
    \end{lemma}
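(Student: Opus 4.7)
The plan is to reduce this to Lemma \ref{lrw}, which already gives the analogous containments for the generators $L_1, L_2, R_1, R_2$ acting on $E^{*}_{i,j}W$. Since $W$ is an $\mathcal{H}$-module, it is invariant under each of $L_1, L_2, R_1, R_2$, so we are free to apply them successively and stay inside $W$.

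First I would recall from (\ref{rldef}) that $R = L_1 R_2$ and $L = L_2 R_1$. For the containment involving $R$, I would apply the operators in that order to $E^{*}_{i,j}W$: by Lemma \ref{lrw} the action of $R_2$ sends $E^{*}_{i,j}W$ into $E^{*}_{i,j+1}W$, and then the action of $L_1$ sends $E^{*}_{i,j+1}W$ into $E^{*}_{i-1,j+1}W$. Composing gives $R E^{*}_{i,j}W \subseteq E^{*}_{i-1,j+1}W$, as required. (When $j+1 > h-\beta$ the intermediate space $E^{*}_{i,j+1}W$ is zero by Lemma \ref{typelem}(ii), and the conclusion still holds trivially; similarly for the other boundary cases.)

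For the containment involving $L$, the argument is symmetric: apply $R_1$ first to send $E^{*}_{i,j}W$ into $E^{*}_{i+1,j}W$ by Lemma \ref{lrw}, then apply $L_2$ to send this into $E^{*}_{i+1,j-1}W$. Composing gives $L E^{*}_{i,j}W \subseteq E^{*}_{i+1,j-1}W$.

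There is essentially no obstacle here; the statement is a direct two-step composition of containments already established in Lemma \ref{lrw}, using the definitions (\ref{rldef}) of $R$ and $L$. The only thing worth flagging is the need to respect the boundary indices from Lemma \ref{typelem}(ii), which is handled automatically since $E^{*}_{r,s}W = 0$ outside the rectangle $\alpha \leq r \leq k-\rho-\alpha$, $\rho+\beta \leq s \leq h-\beta$.
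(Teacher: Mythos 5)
Your proof is correct and takes essentially the same route as the paper: the paper just cites the containments (\ref{rlev}) on the standard module $V$ (Lemma \ref{rle}), which already package the factorizations $R=L_1R_2$, $L=L_2R_1$ with the generator containments, and restricts them to $W$ using $E^{*}_{i,j}W=W\cap E^{*}_{i,j}V$. The only extra care in your version is the boundary caveat you flag, which arises because you compose at the level of $W$ via Lemma \ref{lrw} (whose index range does not cover the intermediate space); working through $V$ as the paper does avoids that issue, but your handling of it is valid.
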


    \begin{proof}
        Immediate from (\ref{rlev}).
    \end{proof}
    
    Recall from (\ref{wnm}) a standard basis for $W$:
    \begin{equation*}
        w_{i,j}\qquad \qquad \alpha\leq i\leq k-\rho-\alpha\qquad \qquad \rho+\beta\leq j\leq h-\beta.
    \end{equation*}

    \begin{lemma}
        For $\alpha\leq i\leq k-\rho-\alpha$ and $\rho+\beta\leq j\leq h-\beta$,
        \begin{equation}
        \label{rw}
            Rw_{i+1,j-1}=c_{i,j}(W)w_{i,j},\qquad \qquad Lw_{i-1,j+1}=b_{i,j}(W)w_{i,j}, 
        \end{equation}
        where 
        \begin{align}
                c_{i,j}(W)&=q^{\rho+\alpha+\beta}[j-\rho-\beta][k-\rho-\alpha-i],\label{cliw}\\
                b_{i,j}(W)&=q^{k-\rho-i+j+1}[i-\alpha][h-\beta-j].\label{bliw}
            \end{align}
    \end{lemma}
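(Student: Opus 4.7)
The plan is to compute $Rw_{i+1,j-1}$ and $Lw_{i-1,j+1}$ directly using the factorizations $R=L_1R_2$ and $L=L_2R_1$ from (\ref{rldef}), applied to the standard basis, via the explicit formulas (\ref{l1w})--(\ref{r2w}) from Lemma \ref{l1l2r1r2w}. Because both factorizations have the form (lowering operator)$\circ$(raising operator), the intermediate vector lies in an adjacent $E^{*}_{r,s}W$, and we can read off the scalar at each step from Lemma \ref{l1l2r1r2w}.

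For the first equation, I would first apply $R_2$ to $w_{i+1,j-1}$, using (\ref{r2w}) with indices shifted to $(i+1,j-1)$, to get a scalar multiple of $w_{i+1,j}$; the scalar is $q^{(\rho+\alpha+\beta-i-j)/2}[j-\rho-\beta]$. Then I would apply $L_1$ to $w_{i+1,j}$ using (\ref{l1w}), which produces a scalar multiple of $w_{i,j}$ with scalar $q^{(\rho+\alpha+\beta+i+j)/2}[k-\rho-\alpha-i]$. Multiplying the two scalars collapses the exponents of $q$ to $\rho+\alpha+\beta$, yielding $c_{i,j}(W)$ as defined in (\ref{cliw}). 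For the second equation, I would analogously first apply $R_1$ to $w_{i-1,j+1}$ using (\ref{r1w}) to obtain a multiple of $w_{i,j+1}$, then apply $L_2$ using (\ref{l2w}) to obtain a multiple of $w_{i,j}$; the two $[\,\cdot\,]$-factors combine to $[i-\alpha][h-\beta-j]$, and adding the $q$-exponents gives $k-\rho-i+j+1$, which is $b_{i,j}(W)$ as in (\ref{bliw}).

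The only care needed is in handling the boundary cases. When $i+1>k-\rho-\alpha$ or $j-1<\rho+\beta$, the vector $w_{i+1,j-1}$ is interpreted as $0$, and in that case $c_{i,j}(W)$ automatically vanishes since one of the factors $[j-\rho-\beta]$ or $[k-\rho-\alpha-i]$ is $0$ (using $[0]=0$); an analogous remark applies to $Lw_{i-1,j+1}$ at the corresponding corners. Thus the claimed identities hold on the nose over the full range $\alpha\leq i\leq k-\rho-\alpha$, $\rho+\beta\leq j\leq h-\beta$.

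I do not anticipate any real obstacle: this is a direct two-step computation from the standard-basis action table in Lemma \ref{l1l2r1r2w}, and the only thing to watch is the bookkeeping of the $q$-exponents and the consistency at the boundary of the index range.
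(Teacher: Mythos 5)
Your proposal is correct and follows essentially the same route as the paper: it uses the factorizations $R=L_1R_2$, $L=L_2R_1$ together with the standard-basis formulas (\ref{l1w})--(\ref{r2w}) of Lemma \ref{l1l2r1r2w}, and your exponent bookkeeping and boundary-case check (vanishing of $[0]$ matching the convention $w_{r,s}=0$) are accurate.
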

    \begin{proof}
        By (\ref{rlew}), the equations in (\ref{rw}) hold for some $c_{i,j}(W), b_{i,j}(W)\in \mathbb{C}$. We now prove (\ref{cliw}). In view of the left equation of (\ref{rldef}), combine (\ref{l1w}) and (\ref{r2w}). The result follows.
        We have proved (\ref{cliw}). The proof of (\ref{bliw}) is similar, and omitted.
    \end{proof}

    Next we describe the action of $A, A^*$ on $W$.
    
    \begin{lemma}
        For $\alpha\leq i\leq k-\rho-\alpha$ and $\rho+\beta\leq j\leq h-\beta$,
        \begin{equation*}
            AE^{*}_{i,j}W\subseteq E_{i+1,j-1}^{*}W+E^{*}_{i,j}W+E_{i-1,j+1}^{*}W.
        \end{equation*}
    \end{lemma}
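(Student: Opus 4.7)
The plan is to use the decomposition $A = R + L + F$ from equation (\ref{adef}) and apply the already-established containments for each summand. Specifically, I would split $AE^{*}_{i,j}W$ as
\begin{equation*}
    AE^{*}_{i,j}W = RE^{*}_{i,j}W + LE^{*}_{i,j}W + FE^{*}_{i,j}W
\end{equation*}
and handle each term separately.

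For the $R$ and $L$ terms, I would invoke Lemma \ref{rlewlem}, which gives $RE^{*}_{i,j}W \subseteq E^{*}_{i-1,j+1}W$ and $LE^{*}_{i,j}W \subseteq E^{*}_{i+1,j-1}W$. For the $F$ term, I would recall Definition \ref{deff} that $F = F^{0}+F^{+}+F^{-}$, and invoke Lemma \ref{factionlem}, which states that $E^{*}_{i,j}W$ is invariant under each of $F^{0}, F^{+}, F^{-}, F$. Thus $FE^{*}_{i,j}W \subseteq E^{*}_{i,j}W$.

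Combining these three containments yields
\begin{equation*}
    AE^{*}_{i,j}W \subseteq E^{*}_{i+1,j-1}W + E^{*}_{i,j}W + E^{*}_{i-1,j+1}W,
\end{equation*}
which is exactly the claim. There is essentially no obstacle here, since all the necessary ingredients have already been assembled in the preceding sections; the lemma is a direct corollary of (\ref{adef}), Lemma \ref{rlewlem}, and Lemma \ref{factionlem}.
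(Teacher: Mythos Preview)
Your proposal is correct and follows essentially the same approach as the paper: the paper's proof simply cites (\ref{av}), the $V$-level containment that was itself obtained from the decomposition $A=R+L+F$ together with (\ref{rlev}) and Lemma~\ref{fe*}, and then restricts to the $\mathcal{H}$-submodule $W$. You carry out the same decomposition directly at the $W$-level using Lemmas~\ref{rlewlem} and~\ref{factionlem}, which is an equally valid and equally short route.
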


    \begin{proof}
        Immediate from (\ref{av}).
    \end{proof}

    \begin{lemma}
        For $\alpha\leq i\leq k-\rho-\alpha$ and $\rho+\beta\leq j\leq h-\beta$,
        \begin{equation*}
            Aw_{i,j}=b_{i+1,j-1}(W)w_{i+1,j-1}+a_{i,j}(W)w_{i,j}+c_{i-1,j+1}(W)w_{i-1,j+1}.
        \end{equation*}
        In the above equation, we interpret
    \begin{equation*}
        w_{r,s}=0,\qquad \qquad b_{r,s}(W)=0,\qquad \qquad c_{r,s}(W)=0,
    \end{equation*}
    unless $\alpha\leq r\leq k-\rho-\alpha$ and $\rho+\beta\leq s\leq h-\beta$.
    \end{lemma}
    \begin{proof}
        In view of (\ref{adef}), combine (\ref{rw}) and the definition of $a_{i,j}(W)$.
    \end{proof}

    \begin{lemma}
        For $\alpha\leq i\leq k-\rho-\alpha$ and $\rho+\beta\leq j\leq h-\beta$, the subspace $E^*_{i,j}W$ is invariant under $A^*$. The corresponding eigenvalue is $q^{i}$.
    \end{lemma}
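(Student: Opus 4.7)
The plan is to reduce the statement about $A^*$ to the already-established eigenvalue information for $K_1^{-1}$. By Definition \ref{a*def}, we have $A^* = q^{k/2} K_1^{-1}$, so it suffices to understand the action of $K_1^{-1}$ on $E^*_{i,j}W$.

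First I would invoke Lemma \ref{klem}, which states that for $\alpha \leq i \leq k-\rho-\alpha$ and $\rho+\beta \leq j \leq h-\beta$, the subspace $E^*_{i,j}W$ is a common eigenspace for $K_1^{\pm 1}, K_2^{\pm 1}$, and in particular $K_1^{-1}$ acts on $E^*_{i,j}W$ as the scalar $q^{i-k/2}$. This immediately gives invariance of $E^*_{i,j}W$ under $K_1^{-1}$, and hence under $A^*$.

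Then I would multiply by $q^{k/2}$ to get the eigenvalue of $A^*$ on $E^*_{i,j}W$:
\begin{equation*}
    A^* w = q^{k/2} K_1^{-1} w = q^{k/2} \cdot q^{i-k/2} w = q^i w
\end{equation*}
for every $w \in E^*_{i,j}W$. This yields both claims. There is no real obstacle here; the statement is essentially a direct corollary of Lemma \ref{klem} combined with Definition \ref{a*def}, and the proof will be a single sentence citing those two results. (As a sanity check, the eigenvalue $q^i$ is consistent with Lemma \ref{a*entry}, which gives the diagonal entries of $A^*$ on $V$.)
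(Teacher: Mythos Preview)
Your proposal is correct and essentially matches the paper's argument. The paper's one-line proof cites Lemma~\ref{a*entry} (the diagonal entries of $A^*$) rather than Lemma~\ref{klem} together with Definition~\ref{a*def}, but since Lemma~\ref{a*entry} is itself derived from Definitions~\ref{kdef} and~\ref{a*def}, the two routes are equivalent and differ only in which intermediate result is named.
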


    \begin{proof}
        Immediate from Lemma \ref{a*entry}.
    \end{proof}

    \section{A generalization of the Askey-Wilson relations}
    Recall the matrix $A$ from Definition \ref{atildelem} and the matrix $A^{*}$ from Definition \ref{a*def}. In this section, we show that $A, A^{*}$ satisfy a pair of relations that generalize the Askey-Wilson relations \cite[Theorem~1.5]{Vidunas}.

    \begin{theorem}
    \label{askeywilson}
        The matrices $A,A^*$ satisfy
        \begin{align}
            &A^2A^{*}-\bigl(q+q^{-1}\bigr)AA^{*}A+A^{*}A^{2}-\mathcal{Y}\bigl(AA^{*}+A^{*}A\bigr)-\mathcal{P} A^{*}=\Omega A+G,\label{askey1}\\
            &A^{*2}A-\bigl(q+q^{-1}\bigr) A^*AA^*+AA^{*2}=\mathcal{Y}A^{*2}+\Omega A^{*}+G^{*},\label{askey2}
        \end{align}
        where 
        \begin{align*}
            \mathcal{Y}=&\;q^{\frac{h+k}{2}}\bigl(K_1K_2^{-1}+K_1^{-1}K_2\bigr)-q^{-1}(q-1)I,\\
            \mathcal{P}=&\;q(q-1)^{-2}\Bigl(\mathcal{Y}^2-q^{h+k-2}(q+1)^2I\Bigr),\\
            \Omega=&\;-q^{\frac{h+k}{2}-1}K_1^{-1}K_2\Bigl((q-1)\Omega_1+(q+1)I\Bigr)-q^{k-1}\Bigl((q-1)\Omega_2+(q+1)I\Bigr),\\
            G=&\;-(q-1)^{-1}\biggl(q^{\frac{h+k}{2}-1}\Bigl(qK_1^{-1}K_2\mathcal{Y}-q^{\frac{h+k}{2}}(q+1)I\Bigr)\Omega_1+q^{k-1}\Bigl(q\mathcal{Y}-q^{\frac{h+k}{2}}(q+1)K_1^{-1}K_2\Bigr)\Omega_2\biggr)\\
            &\qquad \qquad -(q+1)(q-1)^{-2}\biggl(\Bigl(q^{\frac{h+k}{2}}K_1^{-1}K_2+q^k I\Bigr)\mathcal{Y}-q^{\frac{h+k}{2}-1}(q+1)\Bigl(q^{k}K_1^{-1}K_2+q^{\frac{h+k}{2}}I\Bigr)\biggr),\\
            G^*=&\;q^{\frac{h+3k}{2}-1}(q+1)\Omega_0K_1^{-1}K_2.
        \end{align*}
    \end{theorem}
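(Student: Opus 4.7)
The plan is to work from the decomposition $A=R+L+F^{0}+F^{+}+F^{-}$ (Lemma~\ref{acor}, with $F=F^{0}+F^{+}+F^{-}$) and $A^{*}=q^{k/2}K_{1}^{-1}$. The engine of the proof is the commutation relations with $A^{*}$: since $A^{*}$ acts on $E^{*}_{i,j}V$ as $q^{i}$ by Lemma~\ref{a*entry}, while $R$ shifts $(i,j)\mapsto(i-1,j+1)$, $L$ shifts $(i,j)\mapsto(i+1,j-1)$, and each of $F^{0},F^{+},F^{-}$ preserves $(i,j)$ (Lemma~\ref{rle} and Lemma~\ref{fe*}), I obtain
\begin{equation*}
A^{*}R=q^{-1}RA^{*},\qquad A^{*}L=qLA^{*},\qquad A^{*}F=FA^{*}.
\end{equation*}

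For (\ref{askey2}), iteration of these relations gives $A^{*2}R=q^{-2}RA^{*2}$, $A^{*2}L=q^{2}LA^{*2}$, and $FA^{*2}=A^{*2}F$. A direct expansion then yields
\begin{equation*}
A^{*2}A-(q+q^{-1})A^{*}AA^{*}+AA^{*2}=(q^{-2}-1-q^{-2}+1)RA^{*2}+(q^{2}-q^{2}-1+1)LA^{*2}+(2-q-q^{-1})FA^{*2},
\end{equation*}
so the $R$ and $L$ contributions cancel identically, leaving $-q^{-1}(q-1)^{2}FA^{*2}$. Applying Lemma~\ref{fcentral} to expand $F=F^{0}+F^{+}+F^{-}$ in terms of $\Omega_{0},\Omega_{1},\Omega_{2},K_{1}^{\pm 1},K_{2}^{\pm 1}$, then separating the pieces proportional to $A^{*2}$, to $A^{*}$, and to $\Omega_{0}K_{1}^{-1}K_{2}$, reads off the stated $\mathcal{Y},\Omega,G^{*}$.

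For (\ref{askey1}) the same substitution gives $A^{2}A^{*}-(q+q^{-1})AA^{*}A+A^{*}A^{2}=A^{*}\,P(R,L,F)$ for a polynomial $P$ with nine monomial terms; the coefficients of $R^{2}$ and $L^{2}$ vanish by the same cancellation, so $P$ is a linear combination of $RL,LR,RF,LF,FR,FL,F^{2}$. I would then invoke the identities of the appendix (Section~9), which relate the cross products of $L_{1},R_{1},L_{2},R_{2}$ and hence of $R=L_{1}R_{2}$, $L=L_{2}R_{1}$, $F$, to rewrite each cross term as a sum of pieces proportional to $R$, to $L$, or lying in $\mathcal{K}$. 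Lemma~\ref{fcentral} then converts all diagonal remainders into polynomials in $\Omega_{0},\Omega_{1},\Omega_{2},K_{1}^{\pm 1},K_{2}^{\pm 1}$. Using $AA^{*}+A^{*}A=A^{*}\bigl((q+1)R+(q^{-1}+1)L+2F\bigr)$ and $\Omega A=\Omega(R+L+F)$, matching the coefficients of $A^{*}R$, $A^{*}L$, $A^{*}$, and the leftover diagonal part against $\mathcal{Y}(AA^{*}+A^{*}A)+\mathcal{P}A^{*}+\Omega A+G$ pins down $\mathcal{Y},\mathcal{P},\Omega,G$ as stated.

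Finally, that each of $\mathcal{Y},\mathcal{P},\Omega,G,G^{*}$ commutes with $A$ and $A^{*}$ follows from three observations: $\Omega_{0},\Omega_{1},\Omega_{2}$ are central in $\mathcal{H}$ by Theorem~\ref{central}; the diagonal matrices $K_{1}K_{2}^{-1}$ and $K_{1}^{-1}K_{2}$ act on $E^{*}_{i,j}V$ as scalars depending only on $i+j$, hence commute with $A$ since each of $R,L,F$ preserves the value of $i+j$ in the sense of (\ref{av}); and $K_{1},K_{2}$ are mutually commuting diagonal matrices, so they commute with $A^{*}=q^{k/2}K_{1}^{-1}$. The main obstacle is the sheer bookkeeping in (\ref{askey1}): each of the seven cross terms $RL,LR,RF,FR,LF,FL,F^{2}$ expands into several pieces after the appendix identities and Lemma~\ref{fcentral}, and the stated forms of $\mathcal{Y},\mathcal{P},\Omega,G$ are precisely what is required to package the resulting $\Omega_{j}$-, $K_{i}$-, and constant-type contributions into a closed expression commuting with $A,A^{*}$.
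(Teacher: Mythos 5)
Your proposal is correct and follows essentially the same route as the paper's proof: eliminate $A^{*}$ via $A^{*}=q^{\frac{k}{2}}K_1^{-1}$, decompose $A=R+L+F$ with $F=F^{0}+F^{+}+F^{-}$ rewritten through $\Omega_0,\Omega_1,\Omega_2,K_1^{\pm1},K_2^{\pm1}$ by Lemma \ref{fcentral}, and finish by evaluating with the appendix relations. Your organizing device---the relations $A^{*}R=q^{-1}RA^{*}$, $A^{*}L=qLA^{*}$, $A^{*}F=FA^{*}$, which kill the $R^{2},L^{2}$ contributions and reduce the left side of (\ref{askey2}) to $-q^{-1}(q-1)^{2}FA^{*2}$---is a sound refinement of what the paper compresses into ``evaluate the result using the relations in the appendix.''
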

    \begin{proof}
        To verify (\ref{askey1}), (\ref{askey2}) we eliminate $A^*$ using Definition \ref{a*def}. We use (\ref{fdef}), (\ref{f0center})--(\ref{f-center}), (\ref{adef}) to write $A$ in terms of $R, L, K_1^{\pm1}, K_2^{\pm1}, \Omega_0, \Omega_1, \Omega_2$. We evaluate the result using the relations in the appendix.
    \end{proof}

    \begin{lemma}
        The matrices $\mathcal{Y}, \mathcal{P}, \Omega, G, G^*$ are central in $\overline{\mathcal{H}}$.
    \end{lemma}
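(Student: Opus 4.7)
The plan is to show that every element of the list $\mathcal{Y}, \mathcal{P}, \Omega, G, G^{*}$ is a $\mathbb{C}$-polynomial in the five matrices $K_1K_2^{-1}$, $K_1^{-1}K_2$, $\Omega_0$, $\Omega_1$, $\Omega_2$, and then to argue that each of these matrices is central in $\overline{\mathcal{H}}$. Inspection of the formulas in Theorem \ref{askeywilson} confirms the first claim directly: $\mathcal{Y}$ is a linear combination of $K_1K_2^{-1}$, $K_1^{-1}K_2$, and $I$; $\mathcal{P}$ is a polynomial in $\mathcal{Y}$ and $I$; $\Omega$ is linear in $K_1^{-1}K_2$, $\Omega_1$, $\Omega_2$, and $I$; $G$ is built from $\mathcal{Y}$, $K_1^{-1}K_2$, $\Omega_1$, $\Omega_2$, and $I$; and $G^{*}$ is a scalar multiple of $\Omega_0 K_1^{-1}K_2$.

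For the centrality of the building blocks, I would handle $\Omega_0, \Omega_1, \Omega_2$ first. These lie in $\overline{\mathcal{H}}$ by the lemma at the end of Section \ref{centralH}, and they are central in all of $\mathcal{H}$ by Theorem \ref{central}, so in particular they commute with every element of the subalgebra $\overline{\mathcal{H}}$. The key remaining step is to show that $K_1^{-1}K_2$ and $K_1K_2^{-1}$ are central in $\overline{\mathcal{H}}$, even though they are \emph{not} central in $\mathcal{H}$ itself. By Definition \ref{kdef}, if $u\in P_{i,j}$ then the $(u,u)$-entry of $K_1^{-1}K_2$ equals $q^{i+j-(h+k)/2}$, which depends only on $\ell = i+j$. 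Hence
\[
K_1^{-1}K_2 \;=\; \sum_{\ell=0}^{h+k} q^{\ell-(h+k)/2}\, E^{*}_{\ell}, \qquad K_1K_2^{-1} \;=\; \sum_{\ell=0}^{h+k} q^{(h+k)/2-\ell}\, E^{*}_{\ell}.
\]
On the other hand, any $X\in \overline{\mathcal{H}} = \sum_{\ell} E^{*}_{\ell}\mathcal{H}E^{*}_{\ell}$ satisfies $X E^{*}_{m}= E^{*}_{m} X E^{*}_{m}= E^{*}_{m}X$ for every $m$, so each $E^{*}_{\ell}$ is central in $\overline{\mathcal{H}}$. Consequently $K_1^{-1}K_2$ and $K_1K_2^{-1}$ are central in $\overline{\mathcal{H}}$.

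Combining the two previous paragraphs, each ingredient from which $\mathcal{Y}, \mathcal{P}, \Omega, G, G^{*}$ is constructed lies in $\overline{\mathcal{H}}$ and is central there; since sums and products of central elements are central, the claim follows. There is no serious obstacle; the only point that must not be overlooked is that the centrality of $K_1^{\pm 1}K_2^{\mp 1}$ is valid inside $\overline{\mathcal{H}}$ but fails inside $\mathcal{H}$, which is precisely why the statement is phrased relative to $\overline{\mathcal{H}}$.
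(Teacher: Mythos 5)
Your proof is correct and takes essentially the same route as the paper, whose entire proof is the observation that $K_1^{-1}K_2$, $K_1K_2^{-1}$, $\Omega_0$, $\Omega_1$, $\Omega_2$ are central in $\overline{\mathcal{H}}$ together with the remark that $\mathcal{Y},\mathcal{P},\Omega,G,G^{*}$ are built from these; you simply supply the details the paper leaves implicit, namely that $K_1^{\pm1}K_2^{\mp1}$ are combinations of the $E^{*}_{\ell}$ and that each $E^{*}_{\ell}$ is central in $\overline{\mathcal{H}}$. One trivial slip: the containment $\Omega_0,\Omega_1,\Omega_2\in\overline{\mathcal{H}}$ is the last lemma of the section on the subalgebra $\overline{\mathcal{H}}$, not of Section \ref{centralH}.
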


    \begin{proof}
        Observe that each of $K_1^{-1}K_2$, $K_1K_2^{-1}$, $\Omega_0, \Omega_1, \Omega_2$ is central in $\overline{\mathcal{H}}$. The result follows.
    \end{proof}

    \begin{note}
        The relations (\ref{askey1}), (\ref{askey2}) are generalizations of the Askey-Wilson relations that appear in \cite[Theorem~1.5]{Vidunas}.
    \end{note}

    Next we describe the action of $\mathcal{Y}, \mathcal{P}, \Omega, G, G^*$ on the irreducible $\mathcal{H}$-modules.

    \begin{theorem}
        Let $W$ denote an irreducible $H$-module of type $(\alpha,\beta,\rho)$. Then for $\alpha\leq i\leq k-\rho-\alpha$ and $\rho+\beta\leq j\leq h-\beta$, the subspace $E^{*}_{i,j}W$ is invariant under each of $\mathcal{Y}, \mathcal{P}, \Omega, G, G^*$. The corresponding eigenvalues are given in the table below. 
    \begin{center}
        \begin{tabular}{c | c}
            Element in $\overline{\mathcal{H}}$ &  Eigenvalue corresponding to $E^{*}_{i,j}W$\\
            \hline \\
            $\mathcal{Y}$ & $q^{h+k-\ell}+q^{\ell}-1+q^{-1}$\\ \\
            $\mathcal{P}$ & $q(q-1)^{-2}\Bigl(\bigl(q^{h+k-\ell}+q^{\ell}-1+q^{-1}\bigr)^2-q^{h+k-2}(q+1)^2\Bigr)$\\ \\
            $\Omega$ & $-\bigl(q^{h+k-\rho-\beta}+q^{k+\beta-1}+q^{k+\ell-\rho-\alpha}+q^{\ell+\alpha-1}\bigr)$\\ \\
            $G$ & $\substack{(q-1)^{-1}\Bigl(\bigl(q^{k-\rho-\alpha+1}+q^{\alpha}\bigr)\bigl(q^{\ell-1}[h+k-\ell]-q^{\ell}[\ell]\bigr)-\bigl(q^{h-\rho-\beta+1}+q^{\beta}\bigr)\bigl(q^k[h+k-\ell]-q^{k-1}[\ell]\bigr)\Bigr)}$\\ \\
            $G^*$ & $q^{k+\ell-\rho-1}(q+1)$
        \end{tabular}
    \end{center}
    In the above table, we write $\ell=i+j$.
    \end{theorem}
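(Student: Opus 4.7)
The plan is to observe that each of $\mathcal{Y}, \mathcal{P}, \Omega, G, G^*$ is a polynomial expression in the pairwise-commuting matrices $K_1^{\pm 1}, K_2^{\pm 1}, \Omega_0, \Omega_1, \Omega_2$. By Lemma \ref{klem}, the matrices $K_1^{\pm 1}, K_2^{\pm 1}$ act on $E^{*}_{i,j}W$ as the scalars displayed there, and by Theorem \ref{omegascalar} the elements $\Omega_0, \Omega_1, \Omega_2$ act on all of $W$ as scalars. Consequently every polynomial in these elements preserves $E^{*}_{i,j}W$ and acts on it as a scalar; the invariance assertion follows at once, and the remaining task is to simplify the scalars.

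Writing $\ell=i+j$, one has $K_1K_2^{-1}\mapsto q^{(h+k)/2-\ell}$ and $K_1^{-1}K_2\mapsto q^{\ell-(h+k)/2}$ on $E^{*}_{i,j}W$. Substituting into the definition of $\mathcal{Y}$ yields $q^{h+k-\ell}+q^\ell-1+q^{-1}$, and the $\mathcal{P}$-entry then follows by squaring. The $G^*$-entry is immediate from $\Omega_0\mapsto q^{-\rho}$ together with the $K_1^{-1}K_2$-eigenvalue. For $\Omega$, I would first verify the identity
\begin{equation*}
(q-1)\bigl(q[k-\rho-\alpha]+[\alpha]\bigr) + (q+1) = q^{k-\rho-\alpha+1}+q^\alpha,
\end{equation*}
which is routine from $[m]=(q^m-1)/(q-1)$, together with the analogous identity obtained by replacing $(k,\alpha)$ by $(h,\beta)$. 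Substituting these into the $\Omega$-formula, along with the $K_1^{-1}K_2$-eigenvalue, produces the four-term expression in the table after cancellation.

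The main obstacle is the evaluation of $G$. Let $Y=q^{h+k-\ell}+q^\ell-1+q^{-1}$ denote the eigenvalue of $\mathcal{Y}$ and let $\omega_r$ denote the eigenvalue of $\Omega_r$ on $W$. The key computational step is to check the two algebraic identities
\begin{align*}
q^\ell Y - q^{h+k-1}(q+1) &= -(q-1)\bigl(q^{\ell-1}[h+k-\ell] - q^\ell[\ell]\bigr),\\
q^k Y - q^{k+\ell-1}(q+1) &= (q-1)\bigl(q^k[h+k-\ell] - q^{k-1}[\ell]\bigr),
\end{align*}
each of which reduces to comparing four monomials once the bracket notation is expanded. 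Using these identities, the two contributions to the $G$-eigenvalue (the part linear in $\omega_1,\omega_2$ and the part independent of them) regroup so that $q^{\ell-1}[h+k-\ell]-q^\ell[\ell]$ is multiplied by $\omega_1+(q+1)/(q-1)$ and $q^k[h+k-\ell]-q^{k-1}[\ell]$ by $-\bigl(\omega_2+(q+1)/(q-1)\bigr)$. Applying the collapsing identities from the previous paragraph turns these factors into $(q-1)^{-1}(q^{k-\rho-\alpha+1}+q^\alpha)$ and $-(q-1)^{-1}(q^{h-\rho-\beta+1}+q^\beta)$ respectively, producing the $G$-entry in the table. The computation is lengthy but mechanical; no new idea is required beyond the scalar-action principle and the two collapsing identities above.
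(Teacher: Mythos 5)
Your proposal is correct and follows the same route as the paper, whose proof simply says to combine Lemma \ref{klem}, Theorem \ref{omegascalar}, and the definitions of $\mathcal{Y},\mathcal{P},\Omega,G,G^*$; you have additionally carried out the scalar simplifications (the collapsing identities $(q-1)\bigl(q[k-\rho-\alpha]+[\alpha]\bigr)+(q+1)=q^{k-\rho-\alpha+1}+q^{\alpha}$ and the two $\mathcal{Y}$-identities) explicitly, and these check out.
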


    \begin{proof}
        Combine Lemma \ref{klem} and Theorem \ref{omegascalar} and the definitions of $\mathcal{Y}, \mathcal{P},\Omega,G,G^*$.
    \end{proof}

    \section{Parameters $\nu,\mu,d,e$}
    Let $W$ denote an irreducible $\mathcal{H}$-module of type $(\alpha,\beta,\rho)$. In some earlier papers in the literature, the action of $\overline{\mathcal{H}}$ on $E^{*}_{k}W$ is often expressed in terms of $\nu,\mu,d,e$ where
    \begin{align*}
        \nu&=\min\{j\mid E^{*}_{k-j,j}W\neq 0,\; 0\leq j\leq k\},\\
        \mu&=\min\{i+j\mid E^{*}_{i,j}W\neq 0,\; 0\leq i\leq k,\; 0\leq j\leq k\},\\
        d&=\bigl \vert \{j\mid E^{*}_{k-j,j}W\neq 0,\; 0\leq j\leq k\}\bigr \vert-1,
        \end{align*}
    and $e$ is an auxiliary parameter. (See \cite{Liang}, \cite[Example~6.1]{ter1}.) The parameters $\nu,\mu,d$ are often called the endpoint, dual endpoint, and diameter, respectively. In this section, we describe how to convert from $\alpha,\beta,\rho$ to $\nu,\mu,d,e$.
    
    There are three cases to consider:
    \begin{enumerate}[label=(C\arabic*)]
        \item $\beta-\alpha\leq 0$;
        \item $0<\beta-\alpha\leq h-k$;
        \item $h-k< \beta-\alpha$.
    \end{enumerate}
    We illustrate the cases (C1)--(C3) using the diagrams below.

    \begin{figure}[!ht]
    \centering
{%
\begin{circuitikz}
\tikzstyle{every node}=[font=\normalsize]
\draw [short] (2.5,-3.5) -- (-1,0);
\draw [short] (-1,0) -- (1.5,2.5);
\draw [short] (1.5,2.5) -- (5,-1);
\draw [short] (5,-1) -- (2.5,-3.5);
\draw [ color=red , fill=pink, rotate around={45:(2,2)}] (0.5,2.25) rectangle (-0.25,-1.5);
\draw [dashed] (5,-1) -- (0, -1);
\draw [<->, >=Stealth] (2.9,-0.9) -- (4.85,-0.9);
\draw [<->, >=Stealth] (1.95,-1.1) -- (2.9,-1.1);
\draw [short] (2.8,-2.09) -- (3,-2.09);
\draw [short] (3,-3.5) -- (2.4,-3.5);
\draw [<->, >=Stealth] (2.9,-2.09) -- (2.9,-3.5);
\node [font=\scriptsize] at (3.95,-0.75) {$\nu$};
\node [font=\scriptsize] at (2.7,-2.8) {$\mu$};
\node [font=\scriptsize] at (2.45,-1.25) {$d$};
\node [font=\normalsize] at (1.5,2.85) {$\mathcal{V}$};
\node [font=\normalsize] at (5.25,-1) {$y$};
\node [font=\normalsize] at (2.5,-3.75) {$0$};
\node [font=\normalsize] at (1.75,0.75) {$W$};
\node [font=\normalsize] at (2,-4.5) {Figure 4.1: Case (C1)};
\end{circuitikz}
\hspace{0.25cm}
\begin{circuitikz}
\tikzstyle{every node}=[font=\normalsize]
\draw [short] (2.5,-3.5) -- (-1,0);
\draw [short] (-1,0) -- (1.5,2.5);
\draw [short] (1.5,2.5) -- (5,-1);
\draw [short] (5,-1) -- (2.5,-3.5);
\draw [ color=red , fill=pink, rotate around={45:(2,2)}] (0.5,1.25) rectangle (-0.25,-0.5);
\draw [dashed] (5,-1) -- (0, -1);
\draw [<->, >=Stealth] (2.55,-1.1) -- (4.85,-1.1);
\draw [<->, >=Stealth] (1.75,-0.9) -- (2.55,-0.9);
\draw [short] (2,-1.35) -- (2.3,-1.35);
\draw [short] (2,-3.5) -- (2.6,-3.5);
\draw [<->, >=Stealth] (2.15,-1.35) -- (2.15,-3.5);
\node [font=\scriptsize] at (3.75,-1.3) {$\nu$};
\node [font=\scriptsize] at (2.35,-2.4) {$\mu$};
\node [font=\scriptsize] at (2.15,-0.75) {$d$};
\node [font=\normalsize] at (1.5,2.85) {$\mathcal{V}$};
\node [font=\normalsize] at (5.25,-1) {$y$};
\node [font=\normalsize] at (2.5,-3.75) {$0$};
\node [font=\normalsize] at (1.75,0.75) {$W$};
\node [font=\normalsize] at (2,-4.5) {Figure 4.2: Case (C2)};
\end{circuitikz}
}%
\end{figure}

\begin{figure}[!ht]
    \centering
    {%
    \begin{circuitikz}
\tikzstyle{every node}=[font=\normalsize]
\draw [short] (2.5,-3.5) -- (-1,0);
\draw [short] (-1,0) -- (1.5,2.5);
\draw [short] (1.5,2.5) -- (5,-1);
\draw [short] (5,-1) -- (2.5,-3.5);
\draw [ color=red , fill=pink, rotate around={45:(2,2)}] (1.4,0.65) rectangle (-1.15,-0.3);
\draw [dashed] (5,-1) -- (0, -1);
\draw [<->, >=Stealth] (2.3,-1.1) -- (4.85,-1.1);
\draw [<->, >=Stealth] (1.05,-0.9) -- (2.3,-0.9);
\draw [short] (1.3,-1.85) -- (1.5,-1.85);
\draw [short] (1.3,-3.5) -- (2.6,-3.5);
\draw [<->, >=Stealth] (1.4,-1.85) -- (1.4,-3.5);
\node [font=\scriptsize] at (3.55,-1.3) {$\nu$};
\node [font=\scriptsize] at (1.2,-2.7) {$\mu$};
\node [font=\scriptsize] at (1.7,-0.75) {$d$};
\node [font=\normalsize] at (1.5,2.85) {$\mathcal{V}$};
\node [font=\normalsize] at (5.25,-1) {$y$};
\node [font=\normalsize] at (2.5,-3.75) {$0$};
\node [font=\normalsize] at (1.9,0.6) {$W$};
\node [font=\normalsize] at (2,-4.5) {Figure 4.3: Case (C3)};
\end{circuitikz}
}%
\end{figure}

\renewcommand{\thefigure}{\arabic{figure}}

\setcounter{figure}{6}
\newpage 

\begin{lemma}{\cite[p.~129]{Liang}}
\label{inverse}
    Below we express $\alpha, \beta, \rho$ in terms of $\nu, \mu, d,e$. 
    
    In case (C1),
    \begin{equation*}
        \alpha=\nu-e,\qquad \qquad \beta=\mu-\nu,\qquad \qquad \rho=e.
    \end{equation*}
    In case (C2),
    \begin{equation*}
        \alpha=\mu-\nu,\qquad \qquad \beta=\frac{\mu-e}{2},\qquad \qquad \rho=\nu-\frac{\mu-e}{2}.
    \end{equation*}
    In case (C3),
    \begin{equation*}
        \alpha=\mu-\nu,\qquad \qquad \beta=k-h+\nu-e,\qquad \qquad \rho=h-k+e.
    \end{equation*}
\end{lemma}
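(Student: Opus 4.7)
The plan is to translate the definitions of $\nu, \mu, d$ into explicit formulas in $\alpha, \beta, \rho$ using Lemma \ref{typelem}(ii), and then invert these formulas in each of the three cases. By Lemma \ref{typelem}(ii), the support of $W$ on the grid is precisely the rectangle
\[
\alpha \leq i \leq k-\rho-\alpha, \qquad \rho+\beta \leq j \leq h-\beta,
\]
and this description drives every calculation.

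First I would compute $\mu$ by noting that $i+j$ is minimized over the rectangle at the corner $(i,j)=(\alpha, \rho+\beta)$, which gives $\mu = \alpha + \beta + \rho$. Next, the condition $E^{*}_{k-j,j}W \neq 0$ amounts to simultaneously requiring $\rho+\alpha \leq j \leq k-\alpha$ and $\rho+\beta \leq j \leq h-\beta$, so $\nu = \rho + \max\{\alpha,\beta\}$ and $d = \min\{k-\alpha,\; h-\beta\} - \rho - \max\{\alpha,\beta\}$. The three cases (C1)--(C3) are precisely those distinguishing which of $\alpha, \beta$ attains the max in $\nu$ and which of $k-\alpha, h-\beta$ attains the min in $d$. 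A direct check gives: in (C1), $\nu = \rho+\alpha$ and $d = k - 2\alpha - \rho$; in (C2), $\nu = \rho+\beta$ and $d = k - \alpha - \rho - \beta$; in (C3), $\nu = \rho+\beta$ and $d = h - 2\beta - \rho$.

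In each case I would then combine the expressions for $\mu$ and $\nu$ with the case-dependent identification of $e$ from \cite[p.~129]{Liang}, namely $e = \rho$ in (C1), $e = \mu - 2\beta$ in (C2), and $e = \rho + k - h$ in (C3). This yields a nonsingular $3 \times 3$ linear system in the unknowns $(\alpha, \beta, \rho)$, whose unique solution is precisely the formula displayed in the statement. One final sanity check that the resulting $(\alpha,\beta,\rho)$ satisfy (\ref{abcondition}) and reproduce the correct case inequality $\beta - \alpha \leq 0$, $0 < \beta - \alpha \leq h-k$, or $h - k < \beta - \alpha$ completes the argument.

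The main obstacle is that $e$ is only labeled ``auxiliary'' in the excerpt and its unified definition is left to \cite{Liang}; no single formula in $\alpha,\beta,\rho$ recovers $e$ across all three cases. One must either import the geometric definition of $e$ from \cite{Liang} or, alternatively, simply \emph{define} $e$ in each case by the relation above, in which case the lemma becomes a bookkeeping statement recording how the quadruple $(\nu,\mu,d,e)$ used in earlier literature decodes into the triple $(\alpha,\beta,\rho)$ used throughout this paper. Either way, once $e$ is pinned down, the rest is routine linear inversion.
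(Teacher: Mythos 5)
The paper offers no argument for this lemma at all: it is imported verbatim as a conversion dictionary, with the citation to \cite[p.~129]{Liang} standing in for a proof. Your proposal goes further and actually derives what can be derived from within the paper, and those computations are correct: by Lemma \ref{typelem}(ii) the support of $W$ is the stated rectangle, so $\mu=\alpha+\beta+\rho$, $\nu=\rho+\max\{\alpha,\beta\}$, and $d=\min\{k-\alpha,\,h-\beta\}-\rho-\max\{\alpha,\beta\}$, and your case-by-case evaluations ($\nu=\rho+\alpha$, $d=k-2\alpha-\rho$ in (C1), etc.) match the inequalities defining (C1)--(C3), using $h>k$ to rule out the fourth combination of max/min. (One small point you leave implicit: $\nu,\mu,d$ are only defined when $E^{*}_{k}W\neq 0$, i.e.\ when the rectangle meets the layer $i+j=k$; this hypothesis is tacit in Section 8 and should be stated.) The obstacle you flag about $e$ is genuine and is exactly where any self-contained proof must stop: the paper never defines $e$, so the case-dependent identifications $e=\rho$, $e=\mu-2\beta$, $e=\rho+k-h$ must either be imported from \cite{Liang} or taken as the definition of $e$, at which point the lemma is routine inversion as you say. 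It is worth noting that $e$ is not mere decoration: in case (C2) one has $d=k-\mu$, so $(\nu,\mu,d)$ alone cannot separate $\beta$ from $\rho$ and the auxiliary parameter is indispensable, whereas in cases (C1) and (C3) the triple $(\nu,\mu,d)$ would already suffice and $e$ is determined by it. So your route gives more than the paper does (an actual verification of the formulas for $\mu,\nu,d$), while at the single point where the paper's citation does real work --- pinning down $e$ --- you are forced back to the same citation; with that caveat made explicit, the argument is sound.
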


\section{Appendix}
In this section, we recall from \cite{Watanabe} the relations between the generators of $\mathcal{H}$.

\begin{lemma}{\cite[Lemma~7.4]{Watanabe}}
\label{appendix1}
    The following (i)--(viii) hold:
    \begin{enumerate}[label=(\roman*)]
        \item $K_1L_1=qL_1K_1$;
        \item $K_1L_2=L_2K_1$;
        \item $qK_1R_1=R_1K_1$;
        \item $K_1R_2=R_2K_1$;
        \item $K_2L_1=L_1K_2$;
        \item $qK_2L_2=L_2K_2$;
        \item $K_2R_1=R_1K_2$;
        \item $K_2R_2=qR_2K_2$.
    \end{enumerate}
\end{lemma}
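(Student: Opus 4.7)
The plan is to verify each of (i)--(viii) by tracking how both sides act on the direct sum decomposition $V=\bigoplus_{i,j}E^{*}_{i,j}V$. By Lemma \ref{kv}, the matrix $K_1$ acts on $E^{*}_{i,j}V$ as the scalar $q^{\frac{k}{2}-i}$ and $K_2$ acts as $q^{j-\frac{h}{2}}$; by Lemma \ref{lrv}, the matrices $L_1, L_2, R_1, R_2$ send $E^{*}_{i,j}V$ into $E^{*}_{i-1,j}V$, $E^{*}_{i,j-1}V$, $E^{*}_{i+1,j}V$, $E^{*}_{i,j+1}V$ respectively. Consequently each side of any proposed identity sends $E^{*}_{i,j}V$ into a \emph{single} subspace $E^{*}_{i',j'}V$, and it suffices to compare the scalar prefactors on a single summand.

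To illustrate, for (i) pick $v\in E^{*}_{i,j}V$. Then $L_1v\in E^{*}_{i-1,j}V$, so
\begin{equation*}
K_1L_1v=q^{\frac{k}{2}-(i-1)}L_1v=q\cdot q^{\frac{k}{2}-i}L_1v=qL_1K_1v.
\end{equation*}
For (iii) the same computation with $R_1$ in place of $L_1$ shifts the $i$-index upward by one, giving $K_1R_1v=q^{-1}R_1K_1v$, equivalently $qK_1R_1=R_1K_1$. Items (ii) and (iv) are immediate because $L_2, R_2$ preserve the $i$-coordinate while the eigenvalue of $K_1$ depends only on $i$. Items (v)--(viii) are the exact mirrors, with the roles of $(i,K_1)$ and $(j,K_2)$ interchanged and the direction of the $j$-shift determined by the subscript $1$ versus $2$ on $L, R$.

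There is no real obstacle here: the argument is a finite eigenvalue check across eight cases, and the only thing to watch is the sign of the exponent of $q$ picked up when an index is raised (for $R_1, R_2$) versus lowered (for $L_1, L_2$). As an alternative route, since the basis $\{\widehat{u}\mid u\in P\}$ refines the decomposition $V=\bigoplus_{i,j}E^{*}_{i,j}V$, one may also verify each identity entrywise from Definitions \ref{kdef} and \ref{l1l2r1r2def}, but the eigenspace argument makes the pattern transparent.
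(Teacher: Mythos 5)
Your argument is correct, but it takes a different route from the paper: the paper does not prove this lemma at all, it simply recalls the eight relations from Watanabe \cite[Lemma~7.4]{Watanabe}, whereas you supply a direct, self-contained verification. Your verification is sound: each side of each identity is a product of one of $L_1,L_2,R_1,R_2$ with a diagonal matrix, so it maps $E^{*}_{i,j}V$ into a single summand $E^{*}_{i',j'}V$, and comparing the scalars via Lemma \ref{kv} (eigenvalues of $K_1,K_2$ on $E^{*}_{i,j}V$) and Lemma \ref{lrv} (the index shifts effected by $L_1,L_2,R_1,R_2$) settles each case; your sample computations for (i) and (iii) are accurate, and the inputs you use rest only on Definitions \ref{kdef} and \ref{l1l2r1r2def}, so there is no circularity with anything proved later in the paper. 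One small caution about your phrase ``exact mirrors'': the symmetry between $(i,K_1)$ and $(j,K_2)$ is not literal, since the exponents $\frac{k}{2}-i$ and $j-\frac{h}{2}$ depend on their indices with opposite signs --- lowering $i$ multiplies the $K_1$-eigenvalue by $q$, while lowering $j$ multiplies the $K_2$-eigenvalue by $q^{-1}$ --- but you explicitly flag the sign bookkeeping, and carrying it out gives exactly (v)--(viii) as stated. The trade-off is the usual one: the citation keeps the paper short and credits the source, while your eigenvalue check makes the appendix self-contained and makes transparent why the factor is $q$ on one side or the other in each relation.
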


\begin{lemma}{\cite[Lemma~7.5]{Watanabe}}
\label{appendix2}
    The following (i)--(iv) hold:
    \begin{enumerate}[label=(\roman*)]
        \item $L_1R_2=R_2L_1$;
        \item $L_2R_1=R_1L_2$;
        \item $qL_1L_2=L_2L_1$;
        \item $R_1R_2=qR_2R_1$.
    \end{enumerate}
\end{lemma}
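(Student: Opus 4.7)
The plan is to prove each of (i)--(iv) by a direct combinatorial computation of matrix entries, and then use the transpose identities $R_1 = L_1^t$, $R_2 = L_2^t$ to halve the work.

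First I would prove (i) $L_1R_2 = R_2L_1$. For $u,v \in P$, expand
\[
(L_1R_2)_{u,v} = \bigl|\{w : w \text{ $\slash$-covers } u \text{ and } w \text{ $\backslash$-covers } v\}\bigr|,
\qquad
(R_2L_1)_{u,v} = \bigl|\{w : u \text{ $\backslash$-covers } w \text{ and } v \text{ $\slash$-covers } w\}\bigr|.
\]
The first set can be nonempty only when such a $w$ contains both $u$ and $v$ with $\dim w = \dim u + 1 = \dim v + 1$, forcing $w = u+v$ and $\dim(u\cap v) = \dim u - 1$; the second set forces $w = u \cap v$ under the same codimension condition. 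Thus on each side the entry is $0$ or $1$. The refinement with respect to $y$ requires on the $L_1R_2$ side that $\dim((u+v)\cap y) = \dim(u\cap y)+1$ and $\dim((u+v)\cap y) = \dim(v\cap y)$, and on the $R_2L_1$ side that $\dim((u\cap v)\cap y) = \dim(u\cap y)$ and $\dim(v\cap y) = \dim((u\cap v)\cap y)+1$. Both conditions reduce to $\dim(v \cap y) = \dim(u \cap y) + 1$ together with the codimension condition above; the remaining dimensional equalities for $(u+v)\cap y$ and $(u\cap v)\cap y$ are then forced by a short modular-law argument (since $u \subseteq u+v$ with $\dim((u+v)/u)=1$, the quotient map gives $\dim((u+v)\cap y) - \dim(u \cap y) \leq 1$, and symmetrically from below). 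Thus the two entries agree.

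Next I would prove (iii) $qL_1L_2 = L_2L_1$ by the same entry-by-entry method. For both products a nonzero $(u,v)$-entry forces $u \subset v$ with $\dim v = \dim u + 2$ and $\dim(v\cap y) = \dim(u\cap y) + 1$. In this situation the intermediate $w$'s lie in bijection with $1$-dimensional subspaces of the $2$-dimensional quotient $\overline v := v/u$. Setting $\overline y := (v \cap y + u)/u \subseteq \overline v$, a short calculation shows $\dim \overline y = \dim(v\cap y) - \dim(u\cap y) = 1$, and that the condition $\dim(w\cap y) = \dim(u\cap y) + 1$ (the $\slash$-cover condition at the lower step of $L_1L_2$) is equivalent to $\overline w = \overline y$, giving exactly one admissible $w$; the condition $\dim(w\cap y) = \dim(u\cap y)$ (the $\backslash$-cover at the lower step of $L_2L_1$) is equivalent to $\overline w \neq \overline y$, giving exactly $q$ admissible $w$'s among the $q+1$ lines in $\overline v$. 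Hence $(L_2L_1)_{u,v} = q \,(L_1L_2)_{u,v}$ for every $u,v$, which is (iii).

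Finally, (ii) and (iv) drop out by transposition. Taking transpose of (i) and using $L_i^t = R_i$ gives $R_2 L_1 \mapsto L_1^t R_2^t \cdot \text{etc.}$; more precisely, $(L_1R_2)^t = R_2^t L_1^t = L_2 R_1$ and $(R_2L_1)^t = L_1^t R_2^t = R_1 L_2$, yielding (ii). Transposing (iii) gives $q(L_1L_2)^t = (L_2L_1)^t$, i.e.\ $qR_2R_1 = R_1R_2$, which is (iv).

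The main obstacle will be the verification in (i) that $\dim(v\cap y) = \dim(u\cap y) + 1$ alone, together with the codimension condition $\dim(u\cap v) = \dim u - 1$, is sufficient to force both $\dim((u+v)\cap y) = \dim(u\cap y) + 1$ and $\dim((u\cap v)\cap y) = \dim(u\cap y)$. This is a careful modular-law argument about the interaction of $y$ with $u, v, u+v, u\cap v$; once this is nailed down, the rest is bookkeeping.
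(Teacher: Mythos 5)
Your proof is correct. Note that the paper itself offers no proof of this lemma at all: it is quoted verbatim from Watanabe \cite[Lemma~7.5]{Watanabe} and used as a black box (this is why it sits in the appendix). So your direct combinatorial verification is genuinely self-contained where the paper merely cites. The two substantive steps both check out: in (i), both $(L_1R_2)_{u,v}$ and $(R_2L_1)_{u,v}$ are $0$ unless $\dim(u\cap v)=\dim u-1=\dim v-1$, and under that condition the chain $u\cap v\subseteq u\subseteq u+v$ (and likewise through $v$) has codimension-one steps, so $\dim(w\cap y)$ can increase by at most $1$ across each step; combined with $\dim(v\cap y)=\dim(u\cap y)+1$ this squeezes $\dim((u+v)\cap y)=\dim(u\cap y)+1$ from $\dim((u+v)\cap y)\geq\dim(v\cap y)$ and $\dim((u\cap v)\cap y)=\dim(u\cap y)$ from $\dim((u\cap v)\cap y)\geq\dim(v\cap y)-1$, exactly as you indicate. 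In (iii), the count of the $q+1$ lines in $v/u$ splitting as $1$ (equal to $\overline y$) versus $q$ (different from $\overline y$) is the standard source of the $q$-commutation, and your identification of which count goes with which product is right. The transposition step for (ii) and (iv) is also handled correctly, including the order reversal. The only cosmetic caveat is that your phrase ``forcing $w=u+v$'' tacitly assumes $u\neq v$, but the diagonal entries vanish on both sides anyway (no $w$ can simultaneously $\slash$-cover and $\backslash$-cover the same subspace), so nothing is lost.
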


\begin{lemma}{\cite[Lemma~7.6]{Watanabe}}
\label{appendix3}
    The following (i)--(iv) hold.
    \begin{enumerate}[label=(\roman*)]
        \item $R_1^2L_1-(q+1)R_1L_1R_1+qL_1R_1^2=-q^{\frac{h+k}{2}-1}(q+1)K_1^{-1}K_2R_1$;
        \item $qR_2^2L_2-(q+1)R_2L_2R_2+L_2R_2^2=-q^{\frac{h+k}{2}}(q+1)K_1K_2^{-1}R_2$;
        \item $qL_1^2R_1-(q+1)L_1R_1L_1+R_1L_1^2=-q^{\frac{h+k}{2}}(q+1)K_1^{-1}K_2L_1$;
        \item $L_2^2R_2-(q+1)L_2R_2L_2+qR_2L_2^2=-q^{\frac{h+k}{2}-1}(q+1)K_1K_2^{-1}L_2$.
    \end{enumerate}
\end{lemma}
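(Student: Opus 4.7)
The plan is to verify the four cubic identities (i)--(iv) as matrix equations in $\text{Mat}_P(\mathbb{C})$. Since $\mathcal{H}$ is closed under conjugate-transpose (by the comment preceding Lemma~\ref{typelem}), the standard module $V$ decomposes as a direct sum of irreducible $\mathcal{H}$-modules, so it suffices to check each identity on every irreducible $\mathcal{H}$-module $W$ of type $(\alpha,\beta,\rho)$. On such a $W$ I use the standard basis $\{w_{i,j}\}$ from Lemma~\ref{l1l2r1r2w} together with the scalar action of $K_1^{\pm1}K_2^{\mp1}$ from Lemma~\ref{klem}.

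For (i), I apply both sides to a basis vector $w_{i,j}$. Because $L_1$ decreases the first index by one and $R_1$ increases it by one while both preserve the second index, each of $R_1^2L_1$, $R_1L_1R_1$, $L_1R_1^2$, and $K_1^{-1}K_2R_1$ sends $w_{i,j}$ to a scalar multiple of $w_{i+1,j}$, so it suffices to match coefficients. Using (\ref{l1w}) and (\ref{r1w}), a bookkeeping computation shows that the three cubic-term coefficients share a common prefactor $q^{(3\alpha-\rho-\beta-i+3j)/2}[i-\alpha+1]$, and the right-hand side coefficient coming from Lemma~\ref{klem} has the same prefactor. After cancellation and the substitution $a=i-\alpha$, $b=k-\rho-\alpha-i$, the identity reduces to the $q$-integer relation
\begin{equation*}
[b+1][a]-(q+1)[a+1][b]+q[a+2][b-1]=-(q+1)q^{a}.
\end{equation*}
This follows by expanding each $q$-integer as $(q^n-1)/(q-1)$ and collecting monomials: the $q^{a+b+1}$ contributions cancel because $1-(q+1)+q=0$, the pure $q^b$ powers cancel because $-q^{b+1}+(q+1)q^{b}-q^{b}=0$, and the surviving $q^a$ powers collapse via $-1+q+q^{2}-q^{3}=-(q-1)^{2}(q+1)$ to yield the stated right-hand side.

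Relations (ii)--(iv) are handled by the same recipe, with $(L_1,R_1,K_1^{-1}K_2)$ replaced by the appropriate analogue among $(L_2,R_2,K_1K_2^{-1})$ and using (\ref{l2w}) and (\ref{r2w}) in place of (\ref{l1w}) and (\ref{r1w}); the resulting $q$-integer identity is the same after relabeling, and the asymmetric appearance of the factor $q$ on the left in (i)--(iv) traces back to the asymmetric half-integer exponents in the $L_i$ versus $R_i$ formulas.

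The main obstacle is the bookkeeping of the half-integer powers of $q$ appearing in (\ref{l1w})--(\ref{r2w}): these must cancel precisely to yield the uniform exponents $(h+k)/2-1$ and $(h+k)/2$ on the right-hand sides of (i)--(iv), and care is needed because three different terms on the left share a single $q$-exponent only after some non-obvious cancellation between the $L$ and $R$ prefactors. Boundary cases where some $w_{r,s}$ vanish by the convention of Lemma~\ref{l1l2r1r2w} are handled automatically, since the reduced $q$-integer identity above remains valid at $a=0$ and $b=0$ with the conventions $[0]=0$ and $[-1]=-q^{-1}$.
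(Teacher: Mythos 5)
The paper offers no proof of this lemma; it is imported verbatim from Watanabe \cite[Lemma~7.6]{Watanabe}, where it is presumably established by directly computing the $(u,v)$-entries of both sides via the counting data of Lemma \ref{cover}. Your argument is a genuinely different route: you reduce to the irreducible summands of the standard module $V$ (legitimate, since $V$ is faithful and decomposes into irreducibles by the closure of $\mathcal{H}$ under conjugate-transpose) and match coefficients in the standard basis of Lemma \ref{l1l2r1r2w}. I checked the computation for (i): all three cubic terms do carry the common prefactor $q^{(3\alpha-\rho-\beta-i+3j)/2}[i-\alpha+1]$, the right-hand side contributes $-(q+1)q^{a}$ times that prefactor, and the reduced identity $[b+1][a]-(q+1)[a+1][b]+q[a+2][b-1]=-(q+1)q^{a}$ holds by the cancellations you list. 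For (ii) the reduced identity is $q[b+1][a]-(q+1)[a+1][b]+[a+2][b-1]=-(q+1)q^{b-1}$, which is indeed the same identity under the relabeling $a\mapsto b-1$, $b\mapsto a+1$, as you assert; the boundary conventions also behave as you claim. So the proof is correct.

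Two caveats worth stating. First, your argument is a consistency check rather than an independent proof of Watanabe's result: Lemma \ref{l1l2r1r2w} is itself a repackaging of \cite[Prop.~8.7]{Watanabe}, whose derivation in the source uses the relations of \cite[Lemma~7.6]{Watanabe}; within the present paper, where both are taken as given, there is no circularity, but one should not present this as a from-scratch proof. Second, a self-contained alternative in the spirit of Lemmas \ref{f0}--\ref{f-} would compare $(u,v)$-entries directly: both sides are supported on pairs with $u$ two $\slash$-covering steps above $v$ or $u=v$-adjacent configurations, and the entries reduce to counting intermediate subspaces via Lemma \ref{cover}. Either route is acceptable; yours buys uniformity across (i)--(iv) at the cost of leaning on the module classification.
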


\begin{lemma}{\cite[Lemma~7.7]{Watanabe}}
\label{appendix4}
    The generators of $\mathcal{H}$ satisfy
    \begin{equation*}
        L_1R_1-R_1L_1+L_2R_2-R_2L_2=q^{\frac{h+k}{2}}(q-1)^{-1}(K_1K_2^{-1}-K_1^{-1}K_2).
    \end{equation*}
\end{lemma}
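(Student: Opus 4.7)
My plan is to verify the identity entry by entry, indexed by pairs $u,v\in P$. Since $K_1$ and $K_2$ are diagonal (Definition~\ref{kdef}), the right-hand side is a diagonal matrix, so for distinct $u,v\in P$ the task reduces to showing the left-hand side vanishes at $(u,v)$, while on the diagonal one just matches scalars.

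For the diagonal entry at $u\in P_{i,j}$, Lemma~\ref{cover} gives
\[
(L_1R_1)_{u,u}=[k-i],\quad (R_1L_1)_{u,u}=q^j[i],\quad (L_2R_2)_{u,u}=q^{k-i}[h-j],\quad (R_2L_2)_{u,u}=[j].
\]
Expanding $[m]=(q^m-1)/(q-1)$ and collecting terms, the alternating sum telescopes to $(q-1)^{-1}(q^{h+k-i-j}-q^{i+j})$, which matches the $(u,u)$-entry of the right-hand side after reading off $(K_1K_2^{-1})_{u,u}=q^{(h+k)/2-i-j}$ and $(K_1^{-1}K_2)_{u,u}=q^{i+j-(h+k)/2}$ from Definition~\ref{kdef}.

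For the off-diagonal case, fix distinct $u,v\in P$. The four $(u,v)$-entries on the left count, respectively, common $\slash$-covers of $u,v$, common elements $\slash$-covered by both, common $\backslash$-covers, and common elements $\backslash$-covered by both. Any such common upper neighbor must equal $u+v$ and any common lower neighbor must equal $u\cap v$, so each of the four entries is $\{0,1\}$-valued, and all four vanish unless $\dim u=\dim v=\dim(u\cap v)+1$. Assuming the latter, Lemma~\ref{coverlem} classifies each of the covers $u\cap v\to u$, $u\cap v\to v$, $u\to u+v$, $v\to u+v$ as $\slash$- or $\backslash$-type; a short case analysis in the $(i,j)$-lattice, broken by whether $u,v$ slash- or backslash-cover $u\cap v$ and, in each case, by the $P_{r,s}$-class of $u+v$, shows the four binary contributions always sum to zero.

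The main obstacle is the off-diagonal case analysis. The subtlety is that the cover-type from $u,v$ up to $u+v$ is \emph{not} forced to match the cover-type from $u\cap v$ up to $u,v$, because $(u+v)\cap y$ may properly contain $(u\cap y)+(v\cap y)$. Resolving which subcases actually occur requires tracking $\dim((u+v)\cap y)$ via the inclusion $(u\cap y)+(v\cap y)\subseteq (u+v)\cap y\subseteq y$ together with $\dim(u+v)=\dim u+\dim v-\dim(u\cap v)$; one then checks that in every subcase the two pairs $(L_1R_1,R_1L_1)$ and $(L_2R_2,R_2L_2)$ either both contribute $1$'s that cancel within a pair, or else a $1$ from the upper entry of one pair is offset by a $1$ from the lower entry of the other pair, yielding zero in every configuration.
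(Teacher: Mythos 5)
The paper does not actually prove this lemma --- it is imported verbatim from Watanabe \cite[Lemma~7.7]{Watanabe} --- so your entrywise verification is a genuinely different, self-contained route, and its skeleton is sound. The diagonal computation is correct: by Lemma~\ref{cover}, the $(u,u)$-entry of the left side at $u\in P_{i,j}$ is $[k-i]-q^{j}[i]+q^{k-i}[h-j]-[j]=(q-1)^{-1}\bigl(q^{h+k-i-j}-q^{i+j}\bigr)$, which matches the right side. Off the diagonal, your reduction to the case $\dim u=\dim v=\dim(u\cap v)+1$, with the four entries read as indicators of the cover types of $u+v$ over $u,v$ and of $u,v$ over $u\cap v$, is also correct, and the cancellation pattern you describe is exactly what happens in the configurations that actually occur.

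There is, however, one concrete point where the justification as written is incomplete. Set $a=\dim(u\cap y)-\dim(u\cap v\cap y)$, $b=\dim(v\cap y)-\dim(u\cap v\cap y)$, $c=\dim\bigl((u+v)\cap y\bigr)-\dim(u\cap v\cap y)$. The tools you cite, namely the inclusion $(u\cap y)+(v\cap y)\subseteq (u+v)\cap y$ and $\dim(u+v)=\dim u+\dim v-\dim(u\cap v)$, give only $a+b\leq c\leq 2$. This does not exclude the configuration $a=b=0$, $c=2$, in which the only nonzero entry among the four is $(R_2L_2)_{u,v}=1$ (both $u,v$ $\backslash$-cover $u\cap v$, while $u+v$ covers neither of them in $\slash$- nor $\backslash$-fashion consistently), so the alternating sum would be $-1$, and your described cancellations do not apply. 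What is needed in addition is the upper bound $c\leq \min(a,b)+1$, which follows because $\bigl((u+v)\cap y\bigr)/(u\cap y)$ embeds into $(u+v)/u$, a one-dimensional space (and similarly with $v$ in place of $u$). With $a+b\leq c\leq \min(a,b)+1$ in hand, the realizable configurations are $(a,b,c)\in\{(1,1,2),(0,0,0),(0,0,1),(1,0,1),(0,1,1)\}$, and in each of these the alternating sum vanishes exactly as you claim; adding this one inequality makes your proof complete.
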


\section*{Acknowledgement}
The author is currently a graduate student at the University of Wisconsin-Madison. He would like to thank his advisor, Paul Terwilliger, for all the valuable ideas and suggestions during the preparation of this manuscript.

Ian Seong\\
Department of Mathematics\\
University of Wisconsin \\
480 Lincoln Drive \\
Madison, WI 53706-1388 USA \\
email: iseong@wisc.edu\\
\end{document}